\newtheorem{definition}{Definition}[section]
\newtheorem{theorem}{Theorem}
\newtheorem{lemma}{Lemma}
\newtheorem{remark}{Remark}
\newenvironment{proof}{\paragraph{Proof:}}{\hfill$\square$}
\def\es{\varepsilon}
\newcommand{\xrightharpoonupdown}[2][-12pt]{%
\stackunder[#1]{$\xrightharpoonup{\text{#2}}$}{}%
}%
\begin{document}
	
	
	\title{Homogenization of a reaction-diffusion problem with large nonlinear drift and Robin boundary data}

\author{Vishnu Raveendran$^{a,*}$,  Ida de Bonis$^{b}$, Emilio N.M. Cirillo$^c$, Adrian Muntean$^a$ \\
$^a$ Department of Mathematics and Computer Science, Karlstad University, Sweden\\
$^b$ Dipartimento di Pianificazione, Design, Tecnologia dell'Architettura, \\ Sapienza Universit{\`a} di Roma, Italy\\
$^c$ Dipartimento di Scienze di Base e Applicate per l’Ingegneria, \\ Sapienza Universit{\`a} di Roma, Italy\\
* vishnu.raveendran@kau.se}

	\date{\today} 
	\maketitle
	
	\begin{abstract}\label{abstract}
	\noindent We study the periodic homogenization of a reaction-diffusion problem with large nonlinear drift and Robin boundary condition posed in an unbounded perforated domain. The nonlinear problem is associated with the hydrodynamic limit of a totally asymmetric simple exclusion process (TASEP) governing a population of interacting particles crossing a domain with obstacle. We are interested in deriving rigorously the upscaled model equations and the corresponding effective coefficients for the case when the microscopic dynamics are linked to a particular choice of characteristic length and time scales that lead to an exploding nonlinear drift. The main mathematical difficulty lies in proving the two-scale compactness and strong convergence results needed for the passage to the homogenization limit. To cope with the situation, we use the concept of two-scale compactness with drift, which is similar to the more classical two-scale compactness result but it is defined now in moving coordinates. We provide as well a  strong convergence result for the corrector function, starting this way the search for the order of the convergence rate of the homogenization process for our target nonlinear drift problem.  
	\end{abstract}
		{\bf Keywords}: Homogenization; reaction-diffusion equations with large nonlinear drift; two-scale convergence with drift; strong convergence in moving coordinates; effective dispersion tensors for reactive flow in porous media.  
		\\
		{\bf MSC2020}: 35B27; 35B45; 35Q92; 35A01  
\maketitle

\section{Introduction}\label{In}

Our interest lies in performing the periodic homogenization limit for  a reaction-diffusion problem with large nonlinear drift
and Robin boundary condition posed in an unbounded perforated domain; see the structure of our target equations \eqref{meq}--\eqref{meqic}. The particular nonlinear drift problem we have in mind is associated with the hydrodynamic limit of a totally asymmetric simple exclusion process
(TASEP) governing a population of interacting particles crossing a domain with obstacle \cite{CIRILLO2016436}. In this framework, we aim to
 derive rigorously the upscaled model equations and the corresponding effective coefficients for the case when the microscopic dynamics are linked to a particular choice of
characteristic length and time scales that lead to an exploding nonlinear drift. Ideally, we wish to get as well some insights into the structure of the corrector for the homogenization procedure. 

The question of exploding drifts is a bit unusual in the context of homogenization asymptotics as only particular scalings leading to blow-up can be handled rigorously. Hence, besides a couple or relatively recent papers [we are going to mention a few of them briefly in the following], there are not many contributions in the area. The celebrated concept of two-scale convergence with drift meant to cope with at least one particular exploding scaling has been introduced in \cite{marusik2005} and later developed in  \cite{allaire2016,allaire2010homogenization,hutridurga,hutridur} very much motivated by attempts to understand to so-called turbulent diffusion (see e.g. \cite{mclaughlin1985convection,holding2017convergence} where one speaks about convective microstructures).
 Our own motivation is aligned with statistical mechanics-motivated approaches  to modeling reactive transport in porous media \cite{cirillo2020upscaling}. For phenomenological approaches to describing the physics of porous media, we refer the reader for instance  to \cite{bear2012phenomenological,vafai2015handbook}. Closely related large-drift upscaling questions appear e.g. in the design of filters to improve catalysis \cite{ILIEV2020103779} and in the control of microstructure for an efficient  smoldering combustion in solid fuels \cite{ijioma2019fast}. The classical Taylor dispersion topic belongs to this context as well, compare \cite{ALLAIRE20102292,allaire2016,piatnitski2020homogenization}.  Another class of periodic homogenization problems in the same framework arise from suitably scaled SDEs descriptions with exploding "volatilities" that lead to Fokker-Plank type counterparts with correspondingly exploding drifts (cf. e.g. \cite{hairer2011,zhang2023quantitative}). Other relevant work related to large drift homogenization problems posed for different geometries can be found e.g. in \cite{ALLAIRE2012300,allaire2012homogenization}.

This work is organized as follows: In section \ref{MicM}, we introduced our microscopic model and the geometry of the heterogeneous domain. Then we list the structural assumptions that we rely on to study the homogenization limit of our  problem. In section \ref{Wellpos}, we show the existence and uniqueness of strong solutions to the target microscopic problem. The difficulty is here twofold: the type of nonlinearity and the unboundedness of the domain. We first study the model equations posed on o a bounded domain and then treat the case of the unbounded domain, which is where the microscopic problem needs to be formulated. To this end, we are using a suitable extension of the concept of solution, a comparison principle, jointly  with a suitable monotonicity argument. 
 We conclude this section by showing $\es$-independent energy estimates and the positivity of the solution. These estimates are key ingredients for the passage to the homogenization limit.  In section \ref{upscalemodel}, we prove our main result which is the rigorous upscaling of the microscopic problem posed in an unbounded periodically perforated domain. The next steps follow the path of the periodic homogenization technique. We first define an extension operator which preserve the energy estimates from the original problem. Finally, we employ the two-scale convergence with drift and related compactness results together with strong convergence-type arguments  in moving coordinates. Using such results, we derive the upscaled model which is a nonlinear reaction-dispersion equation coupled with an elliptic cell problem. In section \ref{searchforcor}, we study the difference of solution to micro-problem to solution to macro-problem in $H^1$ norm with the help of a corrector function. A couple of brief remarks and some ideas for potential future work are the subject of the closing section.
 
\section{The microscopic model}\label{MicM}
\begin{figure}[b]
		\begin{center}
			\begin{tikzpicture}
			\draw (0,0)  to (6,0) to (6,3.75) to (0,3.75) to (0,0);
			\draw (1,0) to (1,3.75)  ;
			\draw (2,0) to (2,3.75)  ;
			\draw (3,0) to (3,3.75)  ;
			\draw (4,0) to (4,3.75)  ;
			\draw (5,0) to (5,3.75)  ;
			\draw (0,.75) to (6,.75);
			\draw(0,1.5) to (6,1.5);
			\draw(0,2.25) to (6,2.25);
			\draw(0,3) to (6,3);
			\draw(0,3.75) to (6,3.75);

	\filldraw [black] plot [smooth cycle] coordinates {(0.5,.32) (0.4,.12) (.7,.22) (.7,.42)(0.4,.52)};
	\filldraw [black] plot [smooth cycle] coordinates {(1.5,.32) (1.4,.12) (1.7,.22) (1.7,.42)(1.4,.52)};
	\filldraw [black] plot [smooth cycle] coordinates {(2.5,.32) (2.4,.12) (2.7,.22) (2.7,.42)(2.4,.52)};
	\filldraw [black] plot [smooth cycle] coordinates {(3.5,.32) (3.4,.12) (3.7,.22) (3.7,.42)(3.4,.52)};
	\filldraw [black] plot [smooth cycle] coordinates {(4.5,.32) (4.4,.12) (4.7,.22) (4.7,.42)(4.4,.52)};
	\filldraw [black] plot [smooth cycle] coordinates {(5.5,.32) (5.4,.12) (5.7,.22) (5.7,.42)(5.4,.52)};

			\filldraw [black] plot [smooth cycle] coordinates {(0.5,1.32-.25) (0.4,1.12-.25) (.7,1.22-.25) (.7,1.42-.25)(0.4,1.52-.25)};
	\filldraw [black] plot [smooth cycle] coordinates {(1.5,.32+.75) (1.4,.12+.75) (1.7,.22+.75) (1.7,.42+.75)(1.4,.52+.75)};
	\filldraw [black] plot [smooth cycle] coordinates {(2.5,.32+.75) (2.4,.12+.75) (2.7,.22+.75) (2.7,.42+.75)(2.4,.52+.75)};
	\filldraw [black] plot [smooth cycle] coordinates {(3.5,.32+.75) (3.4,.12+.75) (3.7,.22+.75) (3.7,.42+.75)(3.4,.52+.75)};
	\filldraw [black] plot [smooth cycle] coordinates {(4.5,.32+.75) (4.4,.12+.75) (4.7,.22+.75) (4.7,.42+.75)(4.4,.52+.75)};
	\filldraw [black] plot [smooth cycle] coordinates {(5.5,.32+.75) (5.4,.12+.75) (5.7,.22+.75) (5.7,.42+.75)(5.4,.52+.75)};	
			
			\filldraw [black] plot [smooth cycle] coordinates {(0.5,.32+1.5) (0.4,.12+1.5) (.7,.22+1.5) (.7,.42+1.5)(0.4,.52+1.5)};
	\filldraw [black] plot [smooth cycle] coordinates {(1.5,.32+1.5) (1.4,.12+1.5) (1.7,.22+1.5) (1.7,.42+1.5)(1.4,.52+1.5)};
	\filldraw [black] plot [smooth cycle] coordinates {(2.5,.32+1.5) (2.4,.12+1.5) (2.7,.22+1.5) (2.7,.42+1.5)(2.4,.52+1.5)};
	\filldraw [black] plot [smooth cycle] coordinates {(3.5,.32+1.5) (3.4,.12+1.5) (3.7,.22+1.5) (3.7,.42+1.5)(3.4,.52+1.5)};
	\filldraw [black] plot [smooth cycle] coordinates {(4.5,.32+1.5) (4.4,.12+1.5) (4.7,.22+1.5) (4.7,.42+1.5)(4.4,.52+1.5)};
	\filldraw [black] plot [smooth cycle] coordinates {(5.5,.32+1.5) (5.4,.12+1.5) (5.7,.22+1.5) (5.7,.42+1.5)(5.4,.52+1.5)};
		\filldraw [black] plot [smooth cycle] coordinates {(0.5,.32+2.25) (0.4,.12+2.25) (.7,.22+2.25) (.7,.42+2.25)(0.4,.52+2.25)};
	\filldraw [black] plot [smooth cycle] coordinates {(1.5,.32+2.25) (1.4,.12+2.25) (1.7,.22+2.25) (1.7,.42+2.25)(1.4,.52+2.25)};
	\filldraw [black] plot [smooth cycle] coordinates {(2.5,.32+2.25) (2.4,.12+2.25) (2.7,.22+2.25) (2.7,.42+2.25)(2.4,.52+2.25)};
	\filldraw [black] plot [smooth cycle] coordinates {(3.5,.32+2.25) (3.4,.12+2.25) (3.7,.22+2.25) (3.7,.42+2.25)(3.4,.52+2.25)};
	\filldraw [black] plot [smooth cycle] coordinates {(4.5,.32+2.25) (4.4,.12+2.25) (4.7,.22+2.25) (4.7,.42+2.25)(4.4,.52+2.25)};
	\filldraw [black] plot [smooth cycle] coordinates {(5.5,.32+2.25) (5.4,.12+2.25) (5.7,.22+2.25) (5.7,.42+2.25)(5.4,.52+2.25)};
		
	\filldraw [black] plot [smooth cycle] coordinates {(0.5,.32+3) (0.4,.12+3) (.7,.22+3) (.7,.42+3)(0.4,.52+3)};
	\filldraw [black] plot [smooth cycle] coordinates {(1.5,.32+3) (1.4,.12+3) (1.7,.22+3) (1.7,.42+3)(1.4,.52+3)};
	\filldraw [black] plot [smooth cycle] coordinates {(2.5,.32+3) (2.4,.12+3) (2.7,.22+3) (2.7,.42+3)(2.4,.52+3)};
	\filldraw [black] plot [smooth cycle] coordinates {(3.5,.32+3) (3.4,.12+3) (3.7,.22+3) (3.7,.42+3)(3.4,.52+3)};
	\filldraw [black] plot [smooth cycle] coordinates {(4.5,.32+3) (4.4,.12+3) (4.7,.22+3) (4.7,.42+3)(4.4,.52+3)};
	\filldraw [black] plot [smooth cycle] coordinates {(5.5,.32+3) (5.4,.12+3) (5.7,.22+3) (5.7,.42+3)(5.4,.52+3)};
         	\draw [<->] (6.1,0) to (6.1,3.75);
         	\draw[->](7,3.5)node[anchor=west] {{\scriptsize $\Omega_{0}^{\varepsilon}$}} to (5.5,3.25);
         	\draw[->](7,3.5) to (5.5,2.70);
         	\draw[->](7,3.5)node[anchor=west] {{\scriptsize $\Omega_{0}^{\varepsilon}$}} to (5.5,3.25);
         	\draw[->](7,3.5) to (5.5,1.95);
         	\draw[->](7,0.5)node[anchor=west] {{\scriptsize $\es Z$}} to (5.85,0.1);
         		\draw[->](7,2.5)node[anchor=west] {{\scriptsize $\Omega_{\varepsilon}$}} to (5.5,1.4);
         		\draw[->](7,2.5)node[anchor=west] {{\scriptsize $\Omega_{\varepsilon}$}} to (4.5,1.4);
         		\draw[->](7,2.5)node[anchor=west] {{\scriptsize $\Omega_{\varepsilon}$}} to (5.5,.6);
			\end{tikzpicture}
			\caption{Schematic representation of the {periodic} geometry corresponding to the {target} microscopic model.}
			\label{fig2}
		\end{center}
	\end{figure}
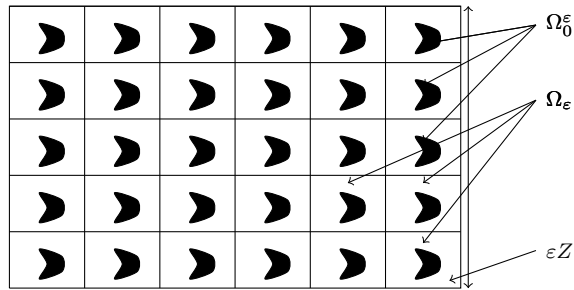

Let  $Y\subset \mathbb{R}^2$ be a unit square in $\mathbb{R}^2$. We define the standard cell $Z$ as $Y$ having as inclusion an impenetrable compact object $Z_0$ called obstacle that is placed inside $Y$ (i.e. $Z:=Y\backslash Z_0$). We assume $\partial Z_0$ has $C^2$ boundary regularity and $\partial Y\cap \partial Z_0=\emptyset .$ We denote $\partial Z_0 $ as $\Gamma_N$ To give our geometry a porous media description,  we define the pore skeleton to be 
\begin{equation*}
    \Omega_{0}^\es:=\bigcup_{(k_1,k_2)\in \mathbb{N}\times \mathbb{N}}\{\es(Z_0+\Sigma_{i=1}^2 k_i e_i)\},
\end{equation*}
where $ \es>0$ and  $\{e_1,e_2\}$ is the orthonormal  basis of $\mathbb{R}^2$. We can now describe the (open) total pore space and its total internal boundary via
\begin{equation}\label{ome}
    \Omega^\es:=\mathbb{R}^2\setminus\Omega_0^\es,
\end{equation}
and respectively, 
\begin{equation*}
    \Gamma_{N}^\es:= \bigcup_{(k_1,k_2)\in \mathbb{N}\times \mathbb{N}} \{\es(\Gamma_N+\Sigma_{i=1}^2 k_i e_i)\}.
\end{equation*}
We denote by $n_\es$, and respectively, $n_y $ the unit normal vectors across the interfaces $ \Gamma_{N}^\varepsilon$, and respectively, $\Gamma_{N}$; they are directed outward with respect to $\partial\Omega_\es$.
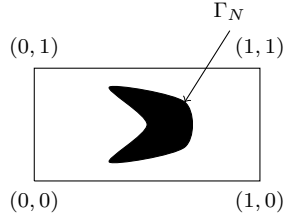
\begin{figure}[ht]
	\begin{center}
		\begin{tikzpicture}
		\draw (0,0) node [anchor=north] {{\scriptsize $(0,0)$}} to (3,0) node [anchor=north] {{\scriptsize $(1,0)$}}  to (3,1.5)node [anchor=south] {{\scriptsize $(1,1)$}} to (0,1.5)node [anchor=south] {{\scriptsize $(0,1)$}} to (0,0);
			\filldraw [black] plot [smooth cycle] coordinates {(1.5,.75) (01,.25) (2,.45) (2,1.05)(1,1.25)};
			\draw[<-](2,1.05) to (2.6,2) node[anchor=south] {{\scriptsize $\Gamma_{N}$}};
		\end{tikzpicture}
		\caption{Standard cell $Z$ exhibiting an obstacle $Y_0$ placed in the center.  
		}\label{scell}
	\end{center}
\end{figure}
\\
  \par  As target system, we consider the following reaction-diffusion-convection problem
   \begin{tcolorbox}
\begin{align}
    \frac{\partial u^{\varepsilon}}{\partial t} +\mathrm{div}(-D^{\varepsilon}\nabla u^{\varepsilon}+ \frac{1}{\es}B^{\varepsilon}P(u^{\varepsilon}))&=f^{\varepsilon}  \,\,&\mbox{on}& \,\,&  \Omega^{\varepsilon} \times (0,T),\label{meq}\\
    (-D^{\varepsilon}\nabla u^{\varepsilon}+ \frac{1}{\es}B^{\varepsilon}P(u^{\varepsilon}))\cdot n_{\es}&=\es g_N(u^\es) &\mbox{on}&   & \,\Gamma_N^\es \times (0,T),\label{meqbcn}\\
     u^\es(0)&=g &\mbox{in}&   &\, \overline{\Omega}^\es\label{meqic}.
\end{align}
\end{tcolorbox}
Here $f^\es:\Omega_\es\rightarrow \mathbb{R}$ and $g_N:\mathbb{R}\rightarrow\mathbb{R}$  are given functions, $D^\es(x_1,x_2):=D(x_1/\es, x_2/\es)$ for $(x_1,x_2)\in \Omega_\es$, where $D$ is a $2\times 2$ matrix and $Z$--periodic defined in the standard unit cell $Z$, $B^\es(x_1,x_2):=B(x_1/\es, x_2/\es)$ where $B$ is a $2\times 1$ vector with positive entries and $Z$--periodic, $C^\es(x_1,x_2):=C(x_1/\es, x_2/\es)$ where $C(\cdot)$ is $Z$ periodic function. 
 What concerns the nonlinear drift $P(\cdot):\mathbb{R}\rightarrow\mathbb{R}$, we set $P(u^\es)$ as  
 \begin{equation}\label{p3}
    P(u^\es)=u^\es(1-C^\es u^\es),
\end{equation} with 
\begin{equation}\label{bc}
     \int_Z BC dy=0.
\end{equation}
If $C^\es=1$, then the structure of the nonlinear drift, i.e. $B^{\varepsilon}P(u^{\varepsilon})$, is precisely what one gets as mean field limit for a suitable TASEP process (cf. \cite{CIRILLO2016436}). It is worth noting that the homogenization question has been posed already for such kind of problem (see \cite{cirillo2020upscaling,raveendran21,raveendran2022upscaling}). The novelty here is the treatment of the exploding  scaling on nonlinear drift.
\vspace{.25cm}
\subsection{Structural assumptions}\label{assumption}
We consider the following restrictions on data and model parameters. We summarize them in the list of assumptions \ref{A1}--\ref{A5}, viz.
 \begin{enumerate}[label=({A}{{\arabic*}})]
 
 \item 
 For all $\eta\in \mathbb{R}^2$ there exists $\theta, \Tilde{\theta}>0$  such that 
 \begin{equation*}\label{}
 \theta \|\eta \|^{2} \leq
 D\eta\cdot \eta\leq \Tilde{\theta}\|\eta \|^{2}, 
 \end{equation*}
 \label{A1}
 \begin{align*}
     D^\es\in C_{\#}^{2,\beta}(Z)^{2\times 2},
 \end{align*}
 where $\beta\in (0,1)$.
\item 
 $B\in C_{\#}^{1,\beta}(Z)^{2}$, $C\in C_{\#}^{1,\beta}(Z)$ satisfies
\begin{equation*}\label{}
\begin{cases}
 	 \mathrm{div}B=0\hspace{.4cm}\mbox{in} \hspace{.4cm}  (0,T)\times Z\\
 	 \mathrm{div}(BC)=0\hspace{.4cm}\mbox{in} \hspace{.4cm}  (0,T)\times Z\\
 	  B\cdot n_y =0 \hspace{.4cm}\mbox{on} \hspace{.4cm}  (0,T)\times \Gamma_N\label{boundaryb}.
 	 \end{cases}
 	 \end{equation*}.
 	 \label{A2}
 	 
  \item  $f^\es :\mathbb{R}^2\rightarrow \mathbb{R}^+$ such that $f^\es \in C_{c}^{2}(\mathbb{R}^2)$ and
  $f^\es\xrightharpoonupdown{$2-drift(B^*)$}f$, where the convergence is defined in the sense of Definition \ref{def2scd}. 

 
 \label{A3}
\item $g_N\in C^{1}(\mathbb{R})$ satisfies
\begin{align}
     -g_N(x)x&<0 \;\;\mbox{for all} \;\;x\neq 0,\label{gnc2}\\
     g_N(x)&\geq g_N(y) \;\;\mbox{} \;\;x\geq y\label{gnc3}.
\end{align}
\label{A4}
\item $g:\mathbb{R}^2\rightarrow \mathbb{R}^+$ such that
\begin{equation*}
    g\in  C_c^\infty(\mathbb{R}^2).
\end{equation*}\label{A5}
\end{enumerate}
 \vspace{.5cm}

Assumptions   \ref{A1}--\ref{A5} are  technical, but they have all a natural physical explanation. \ref{A1} is linked to the choice of non-degenerating diffusion process in the underlying stochastic description, \ref{A2} points out the incompressibility of the flow and how the flow behaves along the boundary of the perforations (the microscopic obstacles), and \ref{A3}--\ref{A5} are simple structural choices for the volume and boundary production terms. Both \ref{A1} and \ref{A2} are essential for the success of our work, while \ref{A3}--\ref{A5} can be replaced by other suitable options.   Note that the relatively high regularity stated  in \ref{A1}--\ref{A5} is primarily needed to ensure the well-posedness of the microscopic problem. To reach the  homogenization limit and to guarantee the strong convergence of the corrector we only need minimal regularity on the data.

\section{Well-posedness of the microscopic problem}\label{Wellpos}

Let us introduce our concept of solution.

\begin{definition}\label{weakformul}
A weak solution to problem \eqref{meq}--\eqref{meqic} is a function $u^\es\in L^2(0,T;H^1(\Omega^\es))\cap H^1(0,T;L^2(\Omega^\es)) $  
satisfying the identity 
\begin{align}
      \int_{\Omega^\es} \partial_t u^\es \phi dx+\int_{\Omega^\es} D^\es \nabla u^\es \nabla \phi dx -\dfrac{1}{\es} \int_{\Omega^\es} B^\es u^\es(1-C^\es u^\es) \nabla \phi dx=  \int_{\Omega^\es} f^\es \phi dx- \es\int_{{\Gamma_N^\es}} g_N(u^\es) \phi d\sigma\label{wf}
\end{align}
for all $\phi\in H^1(\Omega^\es)$ and a.e. $t\in (0,T)$ with initial condition $u(0)=g$.
\end{definition}
Before proceeding to any asymptotic study of type $\es\to 0$, we must ensure first that this concept of solution is suitable for the problem at hand. 

\vspace{.5cm}
\subsection{Analysis for bounded domains, extension arguments, $\es$-independent bounds}
In this section, {we are concerned with guaranteeing the weak solvability of} the microscopic problem \eqref{meq}--\eqref{meqic}, i.e. we look for solutions in the sense of Definition \ref{weakformul}. {Relying on fundamental results from \cite{ladyzhenskaia1988linear}, we first show the existence of a strong solution for the same model equations while they are formulated in a bounded domain.} 
{As next step,} by using a comparison principle {combined with} a monotone convergence argument, we show that a particular {sequence of extensions of the solution to the bounded domain problem} converges to the weak solution of our {original} problem \eqref{meq}--\eqref{meqic} posed on an unbounded domain.

\begin{theorem}\label{Lad}
Consider the following nonlinear reaction diffusion convection equation
\begin{align}
    u_t-a_{ij}(t,x,u)u_{x_ix_j}+b(t,x,u,u_{x})&=0 \,\;\;\;\mbox{on}\;\;\; (0,T)\times\Omega\label{Lp1}\\
    a_{ij}(t,x,u)u_{x_i}\cdot n&= \psi \,\;\;\;\mbox{on}\;\;\; (0,T)\times\partial\Omega\label{Lp2}\\
    u(0,x)=\psi _0(x,u)
\end{align}
with the assumptions 
\begin{itemize}
    \item [(P1)] \begin{align*}
        \nu \xi^2 \leq a_{ij}(t,x,u)\xi_{i}\xi_{j}\leq \mu \xi^2.
    \end{align*}
    \item [(P2)] If $|y|\leq M$ for some constant {$M>0$}, $\frac{\partial a_{ij}(t,x,y)}{\partial y}, \frac{\partial a_{ij}(t,x,y)}{\partial x}, \psi, \frac{\partial \psi(t,x,y)}{\partial x},\frac{\partial \psi(t,x,y)}{\partial y}, \frac{\partial^2 a_{ij}(t,x,y)}{\partial^2y}, $\\   $\frac{\partial^2 a_{ij}(t,x,y)}{\partial y\partial t}, \frac{\partial^2 a_{ij}(t,x,y)}{\partial x\partial t} $
        are uniformly bounded in their respective domain.
   \item [(P3)] For fixed $x,t,y$ and arbitrary $p$, there exist $\mu>0$
   \begin{align*}
       |b(t,x,y,p)|\leq \mu (1+p^2).
   \end{align*}
   \item [(P4)] There exist  $C_0, C_1,C_2>0$ such that for $(t,x)\in (0,T)\times\Omega$,  $b(t,x,y,p)$ satisfies the condition 
   \begin{align*}
       -yb(t,x,y,0)\leq C_0y^2+C_1,\\
       |b_p|(1+|p|)+|b_y|+|b_t|\leq C_2(1+p^2). 
   \end{align*}
   \item [(P5)] {There exist  $C_0, C_1>0$ such that for $(t,x)\in (0,T)\times\partial\Omega$,  $\psi(t,x,y)$ satisfies the condition 
   \begin{align*}
       -y\psi(t,x,y)< 0 \;\;\;\mbox{for all}\;\;\; (t,x)\in 
(0,T) \times\partial \Omega\;\mbox{and}\; |y|>0.
       \end{align*}}
     \item [(P6)] For $|y|\leq M $, where $M$ is a constant, $a_{ij}(t,x,y), b(t,x,y,p) $ and $\psi(t,x,y)$ are continuous in their arguments.
     \item [(P7)] For $|y|,|p|\leq M $ ${a_{ij}}_x(t,x,y), b(t,x,y,p)$ are Hölder continuous in variable $x$ with exponent $\beta$ $\psi_x(t,x,y)$ is Hölder continuos in $x$ and $t$ with exponent $\beta$ and $\frac{\beta}{2}$ respectively.
     \item [(P8)] $\partial\Omega$ is of class $H^{2+\beta}$.
     \item [(P9)] $\psi(0,x,0)=0$.
   
\end{itemize}
Then there exist unique solution $u\in H^{2+\beta,1+\frac{\beta}{2}}(\Bar{\Omega}_T) $
satisfying \eqref{bmeq}--\eqref{bmeqic} and 
\begin{equation}\label{linf1}
    \|u\|_{L^{\infty}((0,T)\times \Omega)}\leq M,
\end{equation}
where $M$ is dependent on $C_0,C_1$ and $\psi_0$ and independent of $|\Omega|$.
\end{theorem}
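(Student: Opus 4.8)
The plan is to recognize this statement as an instance of the classical theory of quasilinear parabolic initial--boundary value problems with a conormal (third-type) boundary condition, as developed in \cite{ladyzhenskaia1988linear}: conditions (P1)--(P9) are precisely tailored so that the hypotheses of the relevant existence-uniqueness-regularity theorems there are met. Accordingly I would organize the argument as a chain of a priori estimates, followed by a Leray--Schauder fixed-point step for existence and a separate energy argument for uniqueness, and I would be explicit about which structural assumption is used at each stage.

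\emph{Step 1: the a priori $L^\infty$ bound \eqref{linf1}.} I would first establish the $L^\infty$ estimate, since it both decouples the quasilinearity and is the assertion whose stated uniformity matters. Testing \eqref{Lp1}--\eqref{Lp2} with a truncation $(u-M(t))^+$ (equivalently, invoking the parabolic maximum principle), the boundary term $\int_{\partial\Omega}\psi(t,x,u)(u-M(t))^+\,d\sigma$ is discarded thanks to the sign condition (P5), $-y\psi(t,x,y)<0$, while the one-sided bound $-yb(t,x,y,0)\le C_0y^2+C_1$ from (P4) controls the volume contribution; one then gets a differential inequality forcing $|u|\le M(t)$ with $M(t)$ of the form $(\|\psi_0\|_\infty+C_1t)e^{C_0 t}$. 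The crucial point — and the reason the statement insists $M$ be independent of $|\Omega|$ — is that the boundary flux is genuinely absorbing, so no trace or Sobolev constant (which would carry a domain-size dependence) ever enters; $M$ depends only on $C_0,C_1,T$ and $\psi_0$.

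\emph{Step 2: Hölder and gradient estimates, then Schauder bootstrap.} Once $u$ is bounded, (P1) makes $x\mapsto a_{ij}(t,x,u(t,x))$ uniformly parabolic with bounded coefficients, and the quadratic (natural) growth condition (P3) together with the structural bounds on $b_p,b_y,b_t$ in (P4) place us exactly in the De Giorgi--Nash--Moser / Bernstein-type framework of \cite{ladyzhenskaia1988linear}. This yields first a Hölder modulus of continuity for $u$, then a bound for $\|\nabla u\|_{L^\infty((0,T)\times\Omega)}$ and $u\in H^{1+\alpha,(1+\alpha)/2}$, using the boundary regularity (P8) to carry the estimates up to $\partial\Omega$. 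With $u$ now Hölder-$C^1$ and the coefficients Hölder in $x$ (and $\psi$ Hölder in $x,t$) by (P7), the problem is a \emph{linear} parabolic equation with Hölder data and conormal boundary condition; linear Schauder theory up to the boundary then gives $u\in H^{2+\beta,1+\beta/2}(\bar\Omega_T)$, the compatibility condition (P9), $\psi(0,x,0)=0$, being exactly what removes the corner singularity at $\{0\}\times\partial\Omega$ and legitimizes the full $2+\beta$ regularity. Continuity hypotheses (P6) are used to make the nonlinear coefficients well-defined and continuous along this chain.

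\emph{Step 3: existence and uniqueness.} Existence I would obtain by the Leray--Schauder fixed-point theorem: freezing $u$ in the arguments of $a_{ij},b,\psi$ defines a solution operator on a Hölder space whose fixed points are solutions; Steps 1--2, which are uniform along the natural homotopy, provide the required a priori bound and the compactness of the operator, and the degree-theoretic argument concludes. Uniqueness follows by subtracting two solutions $u_1,u_2$, testing with $u_1-u_2$, and using the Lipschitz dependence of $a_{ij}$ and $\psi$ on $y$ on the compact range $[-M,M]$ (from the uniformly bounded $y$-derivatives in (P2)) together with the already-established $L^\infty$ gradient bound, then Grönwall; the sign structure of the boundary term keeps its contribution of the right sign. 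The main obstacle is carrying out Step 1 with the stated domain-independence: one must route the $L^\infty$ bound entirely through the maximum principle and the sign conditions (P4)--(P5) and scrupulously avoid any estimate invoking $\|1\|_{L^2(\Omega)}$, $|\Omega|$, or trace constants, since a spurious dependence of $M$ on $|\Omega|$ would be fatal for the later extension to the unbounded domain. The remaining steps are a faithful transcription of the quasilinear parabolic theory in \cite{ladyzhenskaia1988linear}.
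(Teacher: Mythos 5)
Your proposal is correct and follows essentially the same route as the paper: the paper simply cites Theorems 7.2--7.4 of \cite{ladyzhenskaia1988linear} (the maximum-principle bound for \eqref{linf1} from Theorems 7.2 and 7.3, and existence--uniqueness from Theorem 7.4), and your three steps are an accurate unpacking of how those cited theorems work, including the key observation that the sign conditions (P4)--(P5) keep the $L^\infty$ bound free of any dependence on $|\Omega|$.
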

\begin{proof}
Note that the formulation of Theorem \ref{Lad} is done using notations similar to the ones used in the monograph \cite{ladyzhenskaia1988linear}. 
The proofs of the existence and uniqueness properties follow directly from Theorem 7.4 in \cite{ladyzhenskaia1988linear}. The proof of the upper bound \eqref{linf1}  is a consequence of combining both Theorem 7.2 and Theorem 7.3 of \cite{ladyzhenskaia1988linear}.
\end{proof}
\vspace{.5cm}
\begin{lemma}[A comparison principle]\label{compar}
Assume \ref{A1}, \ref{A2} and \ref{A4} to hold true. Let $E\subset \Omega^\es$ be a bounded domain and $v\in L^2(0,T;H^1(E))\cap H^1(0,T;L^2(E))\cap L^\infty ((0,T)\times E)$, $u\in L^2(0,T;H_E)\cap H^1(0,T;L^2(E))\cap L^\infty ((0,T)\times E)$, where 
\begin{align*}
    H_E:=\{u\in H^1(E): u=0\;\; \mbox{on} \;\;\partial E\backslash (\Gamma^\es\cap \partial E)\}.
\end{align*}
 If $u,v$ satisfy the following identities
 \begin{align}
      \int_{E} \partial_t u \phi dx+\int_{E} D^\es \nabla u \nabla \phi dx - \frac{1}{\varepsilon}\int_{E} B^\es P(u) \nabla \phi dx=  \int_{E} f^\es \phi dx-\es\int_{{\Gamma_N^\es}\cap \partial E} g_N(u) \phi d\sigma,\label{l1}\\
      \int_{E} \partial_t v \psi dx+\int_{E} D^\es \nabla v \nabla \psi dx - \frac{1}{\varepsilon}\int_{E} B^\es P(v) \nabla \psi dx=  \int_{E} f^\es \psi dx-\es\int_{{\Gamma_N^\es}\cap \partial E} g_N(v) \psi d\sigma,\label{l2}
\end{align}
 and \begin{align}
 u(0)&=v(0),\\
     u&\leq v \;\;\mbox{on} \;\; \partial E\backslash (\Gamma^\es\cap \partial E).
 \end{align}
 for all $\phi\in H_E$, $\psi\in H^1(E)$,
 then it holds \begin{align}
     u\leq v \;\;\mbox{on}\;\;E.
 \end{align}
\end{lemma}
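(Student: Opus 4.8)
The plan is to prove the comparison principle by testing the equations with the positive part of the difference $w:=(u-v)^+$ and deriving a Gr\"onwall-type inequality. First I would note that $w\in L^2(0,T;H^1(E))$ with $w=0$ on $\partial E\setminus(\Gamma^\es_N\cap\partial E)$ (since $u\le v$ there), so $w$ is an admissible test function in \eqref{l1} and, being in $H^1(E)$, also in \eqref{l2}. Subtracting \eqref{l2} from \eqref{l1} and testing with $\phi=\psi=w$ yields
\begin{align*}
\int_E \partial_t(u-v)\,w\,dx + \int_E D^\es\nabla(u-v)\cdot\nabla w\,dx - \frac1\es\int_E B^\es\big(P(u)-P(v)\big)\cdot\nabla w\,dx \\
= -\es\int_{\Gamma^\es_N\cap\partial E}\big(g_N(u)-g_N(v)\big)w\,d\sigma.
\end{align*}
On the set $\{u>v\}$ we have $\nabla w=\nabla(u-v)$ and $w>0$, while elsewhere $w=0$ and $\nabla w=0$; hence the time term becomes $\frac12\frac{d}{dt}\|w\|_{L^2(E)}^2$ (using $u(0)=v(0)$ so $w(0)=0$), and the diffusion term is bounded below by $\theta\|\nabla w\|_{L^2(E)}^2$ via the ellipticity in \ref{A1}.

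Next I would handle the three remaining terms. For the boundary term, on $\{u>v\}$ monotonicity assumption \eqref{gnc3} gives $g_N(u)\ge g_N(v)$, so $\big(g_N(u)-g_N(v)\big)w\ge 0$ and the right-hand side is $\le 0$; it can simply be dropped. The crucial term is the nonlinear drift term. I would write $P(u)-P(v)=(u-v)-C^\es(u^2-v^2)=(u-v)\big(1-C^\es(u+v)\big)$, so on $\{u>v\}$ we have $P(u)-P(v)=w\,\big(1-C^\es(u+v)\big)$. Using the $L^\infty$ bounds on $u,v$ (say $\|u\|_\infty,\|v\|_\infty\le M_0$) and the boundedness of $C^\es$ (from \ref{A2}) and $B^\es$, the drift term is estimated by
\[
\Big|\frac1\es\int_E B^\es\big(P(u)-P(v)\big)\cdot\nabla w\,dx\Big| \le \frac{c(\es,M_0)}{\es}\int_E w\,|\nabla w|\,dx \le \theta\|\nabla w\|_{L^2(E)}^2 + C_\es\|w\|_{L^2(E)}^2,
\]
by Young's inequality with a weight chosen to absorb the gradient term into the diffusion term. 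Collecting everything gives $\frac12\frac{d}{dt}\|w\|_{L^2(E)}^2 \le C_\es\|w\|_{L^2(E)}^2$, and since $w(0)=0$, Gr\"onwall's lemma forces $w\equiv 0$, i.e. $u\le v$ on $E$ for a.e. $t$, hence everywhere by continuity in time.

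The main obstacle I anticipate is the rigorous justification of using $w=(u-v)^+$ as a test function and of the chain rule $\int_E\partial_t(u-v)\,w\,dx=\frac12\frac{d}{dt}\|w\|_{L^2(E)}^2$: this requires $u-v\in L^2(0,T;H^1(E))\cap H^1(0,T;L^2(E))$ (which holds by hypothesis) together with a standard but slightly delicate density/mollification argument (e.g. via the Lions–Magenes lemma on $\frac{d}{dt}\langle \cdot,\cdot\rangle$ applied to the positive-part truncation), and careful attention that $w$ indeed vanishes on the Dirichlet part of $\partial E$ so that it lies in $H_E$. A secondary point requiring care is that the constant $C_\es$ in the Gr\"onwall estimate is allowed to blow up as $\es\to 0$ — this is harmless here since $\es>0$ is fixed, but it means this lemma cannot by itself be iterated uniformly in $\es$; that $\es$-uniformity is recovered later through the extension operator and energy estimates. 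Everything else — the ellipticity lower bound, the Young inequality absorption, the sign of the boundary term — is routine once the functional-analytic setup is in place.
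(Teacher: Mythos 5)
Your proof is correct and follows essentially the same route as the paper: since $(u-v)^+=(v-u)^-$, your test function is literally the one used in the paper's proof, and the subsequent steps (sign of the boundary term via the monotonicity \eqref{gnc3}, factoring $P(u)-P(v)=(u-v)(1-C^\es(u+v))$ with the $L^\infty$ bounds, Young's inequality to absorb the gradient, Gr\"onwall) coincide with the paper's argument. The only cosmetic difference is that the paper keeps a margin $(\theta-\delta)$ on the diffusion term rather than absorbing all of $\theta$, which changes nothing.
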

\begin{proof}
Let $(v-u)=(v-u)^{+}-(v-u)^{-}$, where
\begin{align}
    (v-u)^{-}&:=\max\{0,-(v-u)\}, \label{vmu}\\
    (v-u)^{+}&:=\max\{0,(v-u)\}. \nonumber
\end{align}
Since $u\leq v \;\;\mbox{on}\;\;\partial E\backslash(\Gamma^\es\cap \partial E)$, $(v-u)^{-}=0$ at $\partial E\backslash(\Gamma^\es\cap \partial E)$. 
In \eqref{l1} and \eqref{l2} we choose the test functions $\phi, \psi$ to be both $(v-u)^{-}$.  Subtracting the corresponding results form each other yields the expressions:
\begin{align}
    \int_{E} \partial_t (v-u) &(v-u)^- dx+\int_{E} D^\es \nabla (v-u) \nabla (v-u)^- dx\nonumber \\
    &-  \frac{1}{\varepsilon}\int_{E} B^\es (P(v)-P(u)) \nabla (v-u)^- dx= \es \int_{{\Gamma_N^\es}\cap \partial E} (g_N(u)-g_N(v)) (v-u)^- d\sigma,\label{l2e2}
\end{align}
\begin{align}
    \int_{E} \partial_t (v-u)^{-} &(v-u)^- dx+\int_{E} D^\es \nabla (v-u)^- \nabla (v-u)^- dx \nonumber\\
    &=  \frac{1}{\es}\int_{E} B^\es (P(v)-P(u)) \nabla (v-u)^- dx +\es\int_{{\Gamma_N^\es}\cap \partial E} (g_N(v)-g_N(u)) (v-u)^- d\sigma.\label{}
\end{align}

\par For $\delta>0$, we have 
\begin{align}
   \frac{1}{\es} \int_{E} B^\es (P(v)-P(u)) \nabla (v-u)^- dx&=\frac{1}{\es}\int_{E} B^\es (v-u) \nabla (v-u)^- dx\nonumber\\
   &\hspace{2cm}-\frac{1}{\es}\int_{E} B^\es  C^\es (v-u)(v+u) \nabla (v-u)^- dx\nonumber\\
    &\leq  C(\es) \int_{E}|(v-u)^-||\nabla (v-u)^-|\nonumber\\
    &{  +C(\es,||u||_{\infty},||v||_{\infty}) \int_{E}|(v-u)^-||\nabla (v-u)^-|}\nonumber\\
    &\leq C(\delta,\es)\int_{E}|(v-u)^-|^2 dx+\delta \int_{E}|\nabla(v-u)^-|^2 dx.\label{l2e3}{}
\end{align}

\noindent By \eqref{gnc3} and \eqref{vmu}, we get
\begin{align}
    \int_{{\Gamma_N^\es}\cap \partial E} (g_N(v)-g_N(u)) (v-u)^- d\sigma\leq 0.\label{l2e4}
\end{align}
Now, using \ref{A2}, \eqref{l2e3} and \eqref{l2e4} on \eqref{l2e2}, for a $\delta>0$ small enough, we obtain
\begin{align}
    \frac{1}{2}\frac{d}{dt}\|(v-u)^-\|^2+(\theta-\delta)\int_{E}|\nabla(v-u)^-|^2 dx\leq C(\delta,\es)\int_{E}|(v-u)^-|^2 dx.\label{l2e5}
\end{align}
Applying Grönwall's inequality to \eqref{l2e5}, we conclude
\begin{align*}
    (v-u)^-=0.
\end{align*}
Hence, we are led to  $u\leq v$ for a.e $x\in E$ and all $t\in [0,T]$.
\end{proof}
\vspace{.5cm}

\begin{theorem}
Assume \ref{A1}--\ref{A5} hold. Then for every fixed $\es>0$, there exists a unique weak solution  $u^\es\in L^2(0,T;H^1(\Omega^\es))\cap H^1(0,T;L^2(\Omega^\es))$ to the problem \eqref{meq}--\eqref{meqic} in the sense of Definition \ref{weakformul}
\end{theorem}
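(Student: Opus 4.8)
\emph{Proof idea.} The plan is to solve the problem first on an increasing family of bounded perforated subdomains, to obtain there strong solutions whose $L^\infty$ norm does not deteriorate with the size of the domain, and then to pass to the limit using the monotonicity supplied by Lemma \ref{compar} together with $\es$-uniform energy estimates. First I would rewrite the equation in non-divergence form: since $\mathrm{div}B=0$ and $\mathrm{div}(BC)=0$ by \ref{A2}, writing $P(u^\es)=u^\es-C^\es(u^\es)^2$ gives $\mathrm{div}(B^\es P(u^\es))=(1-2C^\es u^\es)\,B^\es\!\cdot\!\nabla u^\es$, so \eqref{meq} becomes $\partial_t u^\es-D^\es_{ij}\partial^2_{x_ix_j}u^\es+b(x,u^\es,\nabla u^\es)=f^\es$ with $b$ affine in $\nabla u^\es$ and $b(x,y,0)=-f^\es$; moreover, since $B\!\cdot\! n_y=0$ on $\Gamma_N$, the flux condition \eqref{meqbcn} collapses to the co-normal condition $D^\es\nabla u^\es\!\cdot\! n_\es=-\es g_N(u^\es)$ on $\Gamma_N^\es$. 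On $E_n:=\Omega^\es\cap\{|x|<n\}$, chosen compatible with the periodic perforation (artificial cuts smoothed) and with $n$ so large that $\mathrm{supp}\,g\cup\mathrm{supp}\,f^\es\subset E_n$, I would impose this co-normal condition on $\Gamma_N^\es\cap\partial E_n$, a homogeneous Dirichlet condition on the outer boundary $S_n$, and the datum $g$, and invoke Theorem \ref{Lad}. Its hypotheses (P1)--(P9) are verified directly: ellipticity and $C^{2,\beta}$-regularity of $a_{ij}=D^\es_{ij}$ from \ref{A1}; the quadratic-growth and sign conditions on $b$ from its explicit affine-in-gradient form and $f^\es\ge0$ (in particular $-y\,b(x,y,0)=y f^\es\le\tfrac12 y^2+\tfrac12\|f^\es\|_\infty^2$); the dissipativity of the boundary datum $\psi=-\es g_N(\cdot)$ from \ref{A4}; and the smoothness and compatibility conditions from the $C^2$-regularity of $\partial Z_0$ and from \ref{A3}, \ref{A5}. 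This produces a strong solution $u_n\in H^{2+\beta,1+\beta/2}$ with the key feature $\|u_n\|_{L^\infty((0,T)\times E_n)}\le M$, where $M$ depends only on the constants in these conditions and on $g$, hence is independent of $n$; the lower bound $u_n\ge0$ follows the same way from $g\ge0$, $f^\es\ge0$ and the sign of the boundary nonlinearity, so $0\le u_n\le M$.

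Next I would extend each $u_n$ by zero to $\Omega^\es$ and establish monotonicity and compactness. Restricting $u_{n+1}$ to $E_n$ and integrating by parts over $E_n\subset E_{n+1}$, one sees that $u_{n+1}|_{E_n}$ satisfies identity \eqref{l2} for all test functions vanishing on $S_n$, while $u_{n+1}\ge0=u_n$ on $S_n$; Lemma \ref{compar} then gives $u_n\le u_{n+1}$ on $E_n$. Testing the weak formulation for $u_n$ with $u_n$ itself yields $n$-independent (and, thanks to the $\es$-uniform control of the data, also $\es$-independent) bounds $\|u_n\|_{L^2(0,T;H^1(E_n))}+\|\partial_t u_n\|_{L^2(0,T;L^2(E_n))}\le C$; the crucial point is that the singular drift contribution $\tfrac1\es\int_{E_n}B^\es P(u_n)\nabla u_n$ \emph{vanishes identically}, because by \ref{A2} it reduces to boundary integrals of $B^\es\!\cdot\! n\,u_n^2$ and $B^\es C^\es\!\cdot\! n\,u_n^3$ that are killed by $B\!\cdot\! n_y=0$ on $\Gamma_N$ and by $u_n=0$ on $S_n$. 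Combining the uniform $L^\infty$ and $H^1$ bounds with monotone convergence, $u_n\nearrow u^\es$ a.e.\ and strongly in $L^2((0,T)\times\Omega^\es)$, with $\nabla u_n\rightharpoonup\nabla u^\es$ and $\partial_t u_n\rightharpoonup\partial_t u^\es$ weakly, so $u^\es$ belongs to the class of Definition \ref{weakformul}. Testing the $E_n$-identity with a compactly supported $\phi\in H^1(\Omega^\es)$ (then extending by density) and letting $n\to\infty$: the time and diffusion terms pass by weak convergence, $P(u_n)\to P(u^\es)$ strongly in $L^2$ thanks to the uniform $L^\infty$ bound (so $u_n^2\to(u^\es)^2$ in $L^2$), which settles the drift term, and $g_N(u_n)\to g_N(u^\es)$ strongly in $L^2(\Gamma_N^\es\times(0,T))$ by trace compactness and the local Lipschitz continuity of $g_N$. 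The initial condition $u^\es(0)=g$ survives via the $C([0,T];L^2(\Omega^\es))$ embedding. Hence $u^\es$ is a weak solution in the sense of Definition \ref{weakformul}.

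For uniqueness I would run a Grönwall argument on the difference $w=u^\es_1-u^\es_2$ of two solutions. Testing the difference of the identities \eqref{wf} with $w$: the time term gives $\tfrac12\tfrac{d}{dt}\|w\|_{L^2}^2$, the diffusion term is $\ge\theta\|\nabla w\|_{L^2}^2$, and the drift difference splits through $P(u^\es_1)-P(u^\es_2)=w-C^\es(u^\es_1+u^\es_2)w$ into $\tfrac1\es\int B^\es w\nabla w$, which vanishes by \ref{A2} exactly as in the energy estimate, plus a quadratic remainder bounded by $C(\es,\|u^\es_1\|_\infty,\|u^\es_2\|_\infty)\|w\|_{L^2}\|\nabla w\|_{L^2}$ and absorbed by Young's inequality; finally the boundary term $-\es\int_{\Gamma_N^\es}(g_N(u^\es_1)-g_N(u^\es_2))\,w\le0$ by the monotonicity \eqref{gnc3}. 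Since $w(0)=0$, Grönwall forces $w\equiv0$. This uses that weak solutions are bounded; for an arbitrary weak solution in the class of Definition \ref{weakformul} one first upgrades to $L^\infty$ by a Stampacchia truncation, which is also the natural place to use the extra structure on $g_N$ mentioned after \ref{A4}.

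The main obstacle is the interplay of the two non-standard features rather than any single routine step: one must extract from Theorem \ref{Lad} an $L^\infty$ bound that stays uniform as $|E_n|\to\infty$ — precisely what makes the exhaustion of the unbounded pore space legitimate — and one must tame the singular $\tfrac1\es$-drift, which is possible only because its antisymmetric, divergence-free structure (from \ref{A2}) makes it disappear in the energy estimate, and because the quadratic nonlinearity $P$ linearizes under the uniform $L^\infty$ bound, yielding the strong $L^2$-convergence of $P(u_n)$ needed to close the limit passage. The monotonicity coming from the comparison principle is the mechanism that turns the available weak convergence into the strong convergence one actually needs.
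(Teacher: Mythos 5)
Your proposal is correct and follows essentially the same route as the paper: exhaustion by bounded perforated subdomains, existence and an $R$-independent $L^\infty$ bound via the Ladyzhenskaya-type result of Theorem \ref{Lad}, zero extension plus the comparison principle of Lemma \ref{compar} to get a monotone sequence, energy estimates in which the $\tfrac{1}{\es}$-drift vanishes by the divergence-free structure of \ref{A2}, monotone convergence to pass to the limit, and a Gr\"onwall argument for uniqueness. Your additional remarks (explicit verification of (P1)--(P9), the strong $L^2$ convergence of $P(u_n)$ from the uniform $L^\infty$ bound, and the Stampacchia truncation to justify boundedness of arbitrary weak solutions in the uniqueness step) are refinements of, not departures from, the paper's argument.
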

\begin{proof}
 We begin the proof with defining a boundary value problem similar to \eqref{meq}--\eqref{meqic} on a bounded domain. Then we study the existence, uniqueness and global boundedness property of the bounded domain problem. Later using some monotonicity argument we derive unique weak solution to \eqref{meq}--\eqref{meqic}.
We define $\Omega^\es_R:=(-\es R,\es R)^2\cap \Omega^\es$, $ \Gamma_{NR}^\es:=\partial \Omega^\es_R\backslash\partial (-\es R,\es R)^2  $, where $R\in \mathbb{N}$. Let $f_R^\es, g_{_R}^\es, D^\es_R, B^\es_R$ be restriction of $f^\es, g_N^\es,  D^\es, B^\es$ on $\Omega^\es_R$ respectively.
Let us consider the following boundary value problem 
\begin{align}
    \frac{\partial u_R^{\varepsilon}}{\partial t} +\mathrm{div}(-D^{\varepsilon}_R \nabla u_R^{\varepsilon}+ \frac{1}{\es}B_R^{\varepsilon}P(u_R^{\varepsilon}))&=f_R^{\varepsilon}  \,\,&\mbox{on}& \,\,&  \Omega_R^{\varepsilon} \times (0,T),\label{bmeq}\\
    (-D^{\varepsilon}\nabla u_R^{\varepsilon}+ \frac{1}{\es}B_R^{\varepsilon}P(u_R^{\varepsilon}))\cdot n_{\es}&=\es g_R^\es &\mbox{on}&   & \,\Gamma_{NR}^\es \times (0,T),\label{bmeqbcn}\\
    u_R^\es&=0 &\mbox{on}&   &\, (\partial\Omega\backslash\Gamma_{NR}^\es) \times (0,T)\label{bmeqbcd},\\
     u_R^\es(0)&=g &\mbox{in}&   &\, \overline{\Omega^\es}_R\label{bmeqic}.
\end{align}
In problem \eqref{bmeq}--\eqref{bmeqic}, we choose \begin{align}
    a_{ij}(t,x,u)=D^\es_{R_{ij}}(x),\nonumber\\
    b(t,x,u,u_{x})=B\cdot \nabla (u^\es_R(1-(u^\es_R)))-f^\es_R,\label{driftb}\\
    \psi (t,x,u)= g_N(u^\es),\nonumber\\
    \psi_0(x)=g(x)\nonumber.
\end{align}
Using assumption \ref{A2}, we find that  \eqref{bmeqbcn} is equivalent to \eqref{Lp2}. Using assumption \ref{A1} we obtain $D^\es_{R_{ij}}(x)$ satisfies condition $(P1),(P2),(P6),(P7)$. We use assumption \ref{A2} and we get
\begin{align}
    \nabla B (u^\es_R(1-u^\es_R))= B \nabla u^\es_R-B2u^\es_R\nabla (u^\es_R).\label{exp1}
\end{align}
By Young's inequality and by \eqref{exp1} we verify that the the term \eqref{driftb} satisfies $(P3),(P4),(P6)$ and $(P7)$.
From \ref{A4} and using \ref{A5} we have that $g_N$ satisfies (P2), (P5) and (P9).

Now, by Theorem \ref{Lad} we deduce that there exists a unique $u^\es_R\in H^{2+\beta,1+\frac{\beta}{2}}(\Bar{\Omega})$ which solves \eqref{bmeq}--\eqref{bmeqic} and 
\begin{align}
    \|u^\es_R\|_{L^{\infty}((0,T)\times \Omega_R^\es)}&\leq M,\label{blinf}
    \end{align}
    where $M$ is a constant independent of $R$.\\

\noindent Using arguments similar to those involved in the proof of Theorem \ref{energy} and jointly with \ref{A3}, we obtain 
\begin{align}
\max_{t\in (0,T)}\|u_R^\es \|_{L^2(\Omega_R^\es)}\leq C,\label{bl2}\\
\|u_R^\es \|_{L^2(0,T;H^1(\Omega_R^\es))}\leq C\label{bgrad},
\end{align}
where $C$ is independent of $R$. Following similar steps of Theorem \ref{posit}, we obtain
\begin{align}
    0\leq u_R^\es,\label{pos}
\end{align}
for a.e. $x\in \Omega^\es_R$ and all $t\in (0,T).$

\noindent Now, we define an extension of the bounded domain problem in unbounded domain.\\ { In particular we set}
\begin{align}
    \Tilde{u}_R^\es=\begin{cases} 
      u_R^\es & x\in \Omega_R^\es \\
     0 & x\in \Omega^\es\backslash \Omega_R^\es .
   \end{cases}\label{bext}
\end{align}
\par From \eqref{blinf}--\eqref{bgrad} and \eqref{bext}, we get
\begin{align}
    \|\Tilde{u}^\es_R\|_{L^{\infty}((0,T)\times \Omega_R^\es)}\leq M,\label{2blinf}\\
    \max_{t\in (0,T)}\|\Tilde{u}_R^\es \|_{L^2(\Omega_R^\es)}\leq C,\label{2bl2}\\
\|\Tilde{u}_R^\es \|_{L^2(0,T;H^1(\Omega_R^\es))}\leq C.\label{2bgrad}
\end{align}
Using \eqref{2blinf}--\eqref{2bgrad},  Lemma \ref{compar} and \eqref{pos} together with Monotone Convergence Theorem, Banach-Alaoglu theorem (see \cite{rudin1973functional}) there exists $u^\es \in L^2(0,T;H^1(\Omega^\es))\cap H^1(0,T;L^2(\Omega^\es))$ such that
\begin{align}
    \Tilde{u}_R^\es \rightarrow u^\es\hspace{2cm}&\mbox{on}\hspace{1cm}L^{2}((0,T)\times\Omega^\es),\label{con1}\\
   \nabla \Tilde{u}_R^\es \rightharpoonup \nabla u^\es \hspace{2cm}&\mbox{on}	\hspace{1cm}L^{2}(0,T;L^{2}(\Omega^\es)),\label{con2}\\
    \frac{\partial (\Tilde{u}_R^\es)}{\partial t} \rightharpoonup  \frac{\partial (u^\es)}{\partial t} \hspace{2cm}&\mbox{on}	\hspace{1cm}L^{2}(0,T;L^{2}(\Omega^\es)),\label{con3}\\
     P(\Tilde{u}_R^\es) \rightharpoonup P( u^\es) \hspace{2cm}&\mbox{on}	\hspace{1cm}L^{2}(0,T;L^{2}(\Omega^\es)),\label{con5}\\
      g_N(\Tilde{u}_R^\es) \rightharpoonup g_N( u^\es) \hspace{2cm}&\mbox{on}	\hspace{1cm}L^{2}(0,T;L^{2}(\Gamma_N^\es))\label{con6}.
    \end{align}

\noindent Convergences \eqref{con1}--\eqref{con6} guarantee the existence of  weak solution to the problem \eqref{meq}--\eqref{meqic} in the sense of Definition \ref{weakformul}.

\noindent To prove the uniqueness of the weak solution , we consider $u^\es_1,u^\es_2\in L^2(0,T;H^1(\Omega^\es))$ weak solutions to problem \eqref{meq}-- \eqref{meqic} in the sense of Definition \eqref{weakformul}. Then we have 
\begin{align}
    \int_{\Omega^\es} {(u^\es_1)}_t \phi dx + \int_{\Omega^\es} D^\es\nabla u^\es_1 \nabla \phi dx =  \frac{1}{\es}\int_{\Omega^\es} B^\es P(u^\es_1)\nabla \phi dx+\int_{\Omega^\es}f^\es \phi dx- \int_{\Gamma_N^\es} g_N(u^\es_1) \phi d\sigma,\label{u1}\\
   \int_{\Omega^\es} {(u^\es_2)}_t \phi dx + \int_{\Omega^\es} D^\es\nabla u^\es_2 \nabla \phi dx =  \frac{1}{\es}\int_{\Omega^\es} B^\es P(u^\es_2)\nabla \phi dx+\int_{\Omega^\es}f^\es \phi dx- \int_{\Gamma_N^\es} g_N(u^\es_2) \phi d\sigma,\label{u2}
 \end{align}
for all $\phi \in L^2(0,T;H)$.\\

We choose the test function $\phi:= u^\es_1-u^\es_2$ for \eqref{u1} and \eqref{u2} and substracting \eqref{u2} to \eqref{u1} we obtain
\begin{align}
\int_{\Omega^\es} \phi_t \phi dx + \int_{\Omega^\es}D^\es \nabla \phi\nabla \phi dx =  \frac{1}{\es}\int_{\Omega^\es} B^\es (P(u_1^\es)-P(u_2^\es))\nabla \phi dx+\int_{\Gamma_N^\es}( g_N(u_2^\es)-g_N(u_1^\es) )\phi d\sigma.
\end{align}
From \ref{A4}, we get 
\begin{align}
    \int_{\Gamma_N^\es}( g_N(u^\es_2)-g_N(u^\es_1) )\phi d\sigma\leq0\label{gn0}.
\end{align}
Using the Mean Value Theorem, \eqref{2blinf}, and the structure of $P(\cdot)$ we get
\begin{align}
   \frac{1}{\es} \int_{\Omega^\es} B^\es (P(u^\es_1)-P(u^\es_2))\nabla \phi dx\leq C(\es)\label{pu1} \int_{\Omega^\es} \phi\nabla \phi dx,
\end{align}
where $C$ is constant determined by 
\begin{align}
    C=\sup_{r\in [0,M]} P'(r),
\end{align}
while $M$ is the constant from \eqref{2blinf}.
Using the ellipticity condition \eqref{A1} together with \eqref{gn0} and \eqref{pu1}, we obtain
\begin{align}
    \frac{d}{dt}\|\phi\|^2_{L^2(_{\Omega^\es})}+\theta \int_{\Omega^\es} |\nabla \phi|^2 dx \leq C\int_{\Omega^\es} \phi\nabla \phi dx\nonumber.
\end{align}
Using Young's inequality and choosing $\delta$ small enough, we get 
\begin{align}
    \frac{d}{dt}\|\phi\|^2_{L^2(_{\Omega^\es})}+(\theta-\delta) \int_{\Omega^\es} |\nabla \phi|^2 dx \leq C(\delta)\int_E \phi^2 dx,\nonumber\\
    \frac{d}{dt}\|\phi\|^2_{L^2(_{\Omega^\es})} \leq C(\delta)\int_{\Omega^\es} \phi^2 dx.\nonumber
\end{align}
Now, using Grönwall's inequalty, we get $\phi=0$ a.e. in $\Omega^\es$ for all times $t\in (0,T)$. Hence the uniqueness of the weak solution to problem \eqref{meq}--\eqref{meqic} follows.
\end{proof}

Note that in Theorem \ref{Lad} we are considering  Neumann boundary conditions. In  our problem \eqref{bmeq}--\eqref{bmeqic}, we are dealing with a Dirichlet boundary condition imposed across a part of the boundary. Since it is a homogeneous condition and also the sets $(\partial\Omega\backslash\Gamma_{NR}^\es)$ and $\Gamma_{NR}^\es$ are disjoint, we can adapt the proof of Theorem \ref{Lad} to our setup \eqref{bmeq}--\eqref{bmeqic}.

\vspace{.5cm}
\subsection{Energy estimates}
\begin{theorem}\label{energy}
Assume \ref{A1}--\ref{A5} hold. There exists a constant $C>0$ independent of $\varepsilon$ such that the following energy estimate hold
\begin{align}
    \|u^\varepsilon\|_{L^{2}(0,T;{H^1(\Omega^\es)})}\leq C,\label{energy1}\\
    \|u^\varepsilon\|_{L^{\infty}(0,T;L^{2}(\Omega^\es)}\leq C,\label{energy2}\\
      \|u^\varepsilon\|_{L^{\infty}((0,T)\times \Omega^\es)}\leq C,\label{energy4}\\
       \|u^\varepsilon\|_{L^{2}(0,T;{L^2(\Gamma_N^\es)})}\leq C. \label{energy5}
\end{align}
\end{theorem}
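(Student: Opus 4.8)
The plan is to obtain all four bounds from a single energy identity, produced by testing the weak formulation \eqref{wf} with $\phi=u^\es$ (an admissible choice for a.e.\ $t\in(0,T)$ since $u^\es(t)\in H^1(\Omega^\es)$), supplemented by the bounded‑domain estimate \eqref{linf1} already at our disposal through Theorem~\ref{Lad}. The decisive step is to show that the exploding drift contribution
\begin{align*}
\frac{1}{\es}\int_{\Omega^\es} B^\es\, u^\es(1-C^\es u^\es)\,\nabla u^\es\, dx
\end{align*}
vanishes. Writing $u^\es(1-C^\es u^\es)\nabla u^\es=\tfrac12\nabla\big((u^\es)^2\big)-\tfrac13\,C^\es\nabla\big((u^\es)^3\big)$ and using $\mathrm{div}\,B^\es=0$ and $\mathrm{div}(B^\es C^\es)=0$ in $\Omega^\es$ (assumption \ref{A2}), one gets
\begin{align*}
B^\es\, u^\es(1-C^\es u^\es)\cdot\nabla u^\es=\tfrac12\,\mathrm{div}\big(B^\es (u^\es)^2\big)-\tfrac13\,\mathrm{div}\big(B^\es C^\es (u^\es)^3\big),
\end{align*}
so that, after integrating over $\Omega^\es$, the two surface integrals produced are proportional to $B^\es\cdot n_\es$ on $\Gamma_N^\es$, which is zero by \ref{A2}. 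This is precisely why \ref{A2} and the constraint \eqref{bc} are imposed. On the unbounded domain this integration by parts is first carried out for the compactly supported approximants $\widetilde u^\es_R$ of the well‑posedness proof and then transferred to $u^\es$ by passing to the limit $R\to\infty$ (equivalently, one may invoke the $L^\infty$‑bound \eqref{energy4} below, which guarantees $(u^\es)^2,(u^\es)^3\in W^{1,1}(\Omega^\es)$).

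With the drift term removed, the test identity becomes
\begin{align*}
\tfrac12\,\frac{d}{dt}\|u^\es\|_{L^2(\Omega^\es)}^2+\int_{\Omega^\es}D^\es\nabla u^\es\cdot\nabla u^\es\,dx+\es\int_{\Gamma_N^\es}g_N(u^\es)\,u^\es\,d\sigma=\int_{\Omega^\es}f^\es u^\es\,dx .
\end{align*}
By the ellipticity in \ref{A1} the second term is $\ge\theta\|\nabla u^\es\|_{L^2(\Omega^\es)}^2$; by the sign condition \eqref{gnc2} one has $g_N(u^\es)u^\es\ge0$ a.e.\ on $\Gamma_N^\es$, so the boundary term is nonnegative and may be discarded; and the source term is controlled by Young's inequality using that $\|f^\es\|_{L^2(\Omega^\es)}$ is bounded uniformly in $\es$ (implicit in \ref{A3}) and $\|g\|_{L^2(\Omega^\es)}\le\|g\|_{L^2(\mathbb{R}^2)}$ by \ref{A5}. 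One is left with a differential inequality $\frac{d}{dt}\|u^\es\|_{L^2}^2+2\theta\|\nabla u^\es\|_{L^2}^2\le\|u^\es\|_{L^2}^2+\|f^\es\|_{L^2}^2$; Grönwall's lemma gives \eqref{energy2} with an $\es$‑independent constant, and integrating the inequality over $(0,T)$ while inserting \eqref{energy2} gives \eqref{energy1}.

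For the uniform $L^\infty$‑bound \eqref{energy4} the plan is to inherit it from the bounded‑domain problem rather than to re‑derive it: the approximants $u^\es_R$ satisfy $\|u^\es_R\|_{L^\infty((0,T)\times\Omega^\es_R)}\le M$ by \eqref{blinf}, with $M$ the constant of \eqref{linf1}, which depends only on $\|f^\es\|_{L^\infty}$, $\|g\|_{L^\infty}$, $T$ and the ellipticity constants of $D$ — in particular on neither $R$ nor $\es$ — and since $\widetilde u^\es_R\to u^\es$ in $L^2((0,T)\times\Omega^\es)$ the bound passes to the limit. Finally, \eqref{energy5} follows by applying a scaled trace inequality for periodically perforated domains, of the form $\es\|v\|_{L^2(\Gamma_N^\es)}^2\le C\big(\|v\|_{L^2(\Omega^\es)}^2+\es^2\|\nabla v\|_{L^2(\Omega^\es)}^2\big)$, to $v=u^\es(t)$ and integrating over $(0,T)$, using \eqref{energy1}. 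The main obstacle is the $1/\es$ drift term; once the algebraic identity that turns it into a flux vanishing on $\Gamma_N^\es$ (via \ref{A2}) is in hand, the rest is standard, the only remaining care being the bookkeeping that keeps every constant independent of $\es$ and the legitimation of the integrations by parts on the unbounded domain.
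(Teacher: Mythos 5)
Your proof follows essentially the same route as the paper: test the weak formulation with $\phi=u^\es$, annihilate the $1/\es$ drift term by writing $P(u^\es)\nabla u^\es$ as a divergence and invoking \ref{A2} (this is exactly the paper's $\tilde P$ computation culminating in \eqref{zer}, which your explicit split into the quadratic and cubic parts actually renders more cleanly), apply Grönwall to get \eqref{energy1}--\eqref{energy2}, inherit \eqref{energy4} from the bounded-domain bound \eqref{blinf}, and use a trace inequality for \eqref{energy5}. The only point of divergence is that your scaled trace inequality literally yields $\es^{1/2}\|u^\es\|_{L^2(\Gamma_N^\es)}\le C$ rather than \eqref{energy5} as stated, but the paper's own one-line appeal to a Lipschitz trace theorem is no more precise there, and the $\es$-weighted bound is what is actually used downstream.
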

\begin{proof}
We prove the estimate \eqref{energy1} by choosing the test function $\phi= u^\es$ in the weak formulation \eqref{wf}, we get
\begin{align}
   \frac{1}{2}\frac{d}{dt}\|u^\es \|_{L^2(\Omega^\es)}+\theta \int_{\Omega^\es}|\nabla u^\es|^2 dx\leq \frac{1}{\es}\int_{\Omega^\es}B^\es P(u^\es)\nabla u^\es dx + \int_{\Omega^\es}f^\es u^\es dx-\es\int_{{\Gamma_N^\es}} g_N(u^\es) u^\es d\sigma.\label{ee1}
   \end{align}
   We define 
   \begin{align}
       \Tilde{P}(u^\es):=\frac{(u^\es)^2}{2}-C^\es\frac{(u^\es)^3}{3},\label{ptilda}
   \end{align}
   then we have 
   \begin{align}
       \nabla \Tilde{P}(u^\es) &=P(u^\es) \nabla u^\es .\label{pprime}
   \end{align}
   Now, using \ref{A2}, \eqref{pprime}, we have
   \begin{align}
    \int_{\Omega^\es}B^\es P(u^\es)\nabla u^\es dx &= \int_{\Omega^\es}B^\es \nabla \Tilde{P}(u^\es) dx\nonumber \\
    &={-\int_{\Omega^\es}\nabla\cdot B^\es \tilde{P}(u^\es) dx} + \int_{\Gamma_N^\es}B^\es\cdot n \Tilde{P}(u^\es) dx \nonumber\\
    &=0\label{zer}.
\end{align}

\noindent Using \eqref{zer} and Grönwall's inequality from \eqref{ee1} we get the required results, i.e. \eqref{energy1} and \eqref{energy2}.\\

\par The estimate \eqref{energy4} follows directly from \eqref{2blinf}.
Since $\Gamma_N^\es$ is uniformly Lipschitz, we use the trace result stated in Theorem 15.23 of \cite{leoni2017first} and we obtain \eqref{energy5}.
\vspace{.5cm}

\end{proof}
\begin{theorem}[A positivity result]\label{posit}
Assume \ref{A1}--\ref{A5} hold. Let $u^\es$ be a weak solution of \eqref{meq}--\eqref{meqic}. Then $u^\es\geq 0$.
\end{theorem}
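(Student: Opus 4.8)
The plan is to run a Stampacchia-type energy argument on the negative part of $u^\es$. Set $(u^\es)^-:=\max\{0,-u^\es\}$, which lies in $L^2(0,T;H^1(\Omega^\es))\cap L^\infty((0,T)\times\Omega^\es)$ because $u^\es$ does (recall \eqref{energy1}, \eqref{energy4}), so it is an admissible test function in \eqref{wf}. Testing \eqref{wf} with $\phi=-(u^\es)^-$ and using the chain rule for $t\mapsto\|(u^\es)^-(t)\|_{L^2(\Omega^\es)}^2$ (legitimate since $u^\es\in L^2(0,T;H^1(\Omega^\es))\cap H^1(0,T;L^2(\Omega^\es))$), the time-derivative term becomes $\tfrac12\tfrac{d}{dt}\|(u^\es)^-\|_{L^2(\Omega^\es)}^2$. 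The goal is to show that every remaining contribution is nonpositive, so that $\tfrac{d}{dt}\|(u^\es)^-\|_{L^2(\Omega^\es)}^2\le 0$; since $u^\es(0)=g\ge 0$ by \ref{A5} yields $(u^\es)^-(0)=0$, this forces $(u^\es)^-\equiv 0$, i.e. $u^\es\ge 0$.

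Three of the four remaining terms are routine. Using $\nabla(u^\es)^-=-\mathbbm{1}_{\{u^\es<0\}}\nabla u^\es$ a.e., the diffusion term equals $\int_{\Omega^\es}D^\es\nabla(u^\es)^-\cdot\nabla(u^\es)^-\,dx\ge\theta\|\nabla(u^\es)^-\|_{L^2(\Omega^\es)}^2\ge 0$ by the ellipticity in \ref{A1}. The volume source contributes $-\int_{\Omega^\es}f^\es(u^\es)^-\,dx\le 0$ since $f^\es\ge 0$ by \ref{A3}. The boundary term contributes $\es\int_{\Gamma_N^\es}g_N(u^\es)(u^\es)^-\,d\sigma$; on the portion of $\Gamma_N^\es$ where $(u^\es)^->0$ one has $u^\es<0$, and \ref{A4} (equivalently $g_N(x)x>0$ for $x\neq 0$) gives $g_N(u^\es)<0$ there, so the integrand is $\le 0$, hence this term is $\le 0$.

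The delicate point, and the main obstacle, is the nonlinear drift term $-\tfrac1\es\int_{\Omega^\es}B^\es P(u^\es)\nabla(-(u^\es)^-)\,dx=\tfrac1\es\int_{\Omega^\es}B^\es P(u^\es)\nabla(u^\es)^-\,dx$. I would introduce the primitive $G(s):=\int_0^s P(\tau)\mathbbm{1}_{\{\tau<0\}}\,d\tau$, which is $C^1$ with $G'(s)=P(s)\mathbbm{1}_{\{s<0\}}$ (note $P(0)=0$) and satisfies $|G(s)|\le c(M)s^2$ for $|s|\le M$; since $P(u^\es)\nabla(u^\es)^- = -G'(u^\es)\nabla u^\es=-\nabla\big(G(u^\es)\big)$, the drift term equals $-\tfrac1\es\int_{\Omega^\es}B^\es\cdot\nabla\big(G(u^\es)\big)\,dx$. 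Integrating by parts and invoking \ref{A2} ($\operatorname{div}B^\es=0$ in $\Omega^\es$ and $B^\es\cdot n_\es=0$ on $\Gamma_N^\es$) makes this term vanish identically. The only subtlety is justifying the integration by parts on the \emph{unbounded} domain $\Omega^\es$: this is legitimate because the bounds \eqref{energy2}, \eqref{energy4} and $\nabla u^\es\in L^2(\Omega^\es)$ give $G(u^\es)\in L^1(\Omega^\es)$ and $\nabla\big(G(u^\es)\big)=G'(u^\es)\nabla u^\es\in L^1(\Omega^\es)$ (Cauchy--Schwarz). Alternatively, one may bypass the drift computation on $\Omega^\es$ entirely by noting that the bounded-domain approximants satisfy $\widetilde u_R^\es\ge 0$ by \eqref{pos} and $\widetilde u_R^\es\to u^\es$ in $L^2((0,T)\times\Omega^\es)$ by \eqref{con1}, whence $u^\es\ge 0$ a.e. Collecting the estimates gives $\tfrac12\tfrac{d}{dt}\|(u^\es)^-\|_{L^2(\Omega^\es)}^2+\theta\|\nabla(u^\es)^-\|_{L^2(\Omega^\es)}^2\le 0$, and the conclusion follows from the vanishing initial datum; a Gr\"onwall argument may be substituted for the monotonicity step if desired.
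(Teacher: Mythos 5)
Your argument is essentially the paper's own proof: test the weak formulation with the negative part, use ellipticity for the diffusion term, the sign conditions \ref{A3} and \eqref{gnc2} for the source and boundary terms, write the drift integrand as the gradient of a primitive so that \ref{A2} kills it after integration by parts, and conclude by monotonicity/Gr\"onwall from $(u^\es)^-(0)=0$. The one imprecision is in your primitive: since $P(u^\es)=u^\es(1-C^\es u^\es)$ with $C^\es=C(x/\es)$ depending on $x$, the identity $P(u^\es)\nabla (u^\es)^-=-\nabla\bigl(G(u^\es)\bigr)$ is not literally true --- the chain rule produces an extra term involving $\nabla C^\es$. The paper avoids this by splitting the primitive into its quadratic and cubic parts, $\tfrac12((u^\es)^-)^2$ and $C^\es\tfrac13((u^\es)^-)^3$, and invoking $\operatorname{div}B=0$ and $\operatorname{div}(BC)=0$ separately; equivalently, you can note that these two conditions force $B\cdot\nabla C=0$, so your extra term vanishes anyway. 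With that repair (and your correct care about integration by parts on the unbounded domain, which the paper glosses over), the proof goes through; your fallback via the nonnegativity \eqref{pos} of the bounded-domain approximants and the convergence \eqref{con1} is also legitimate, since \eqref{pos} is established at the bounded-domain level where the energy argument is unproblematic.
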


\begin{proof}
We choose as test function $\phi=(u^{\es})^-$ in \eqref{wf}, where $u^\es=(u^{\es})^+-(u^{\es})^-$ with $u^-=\max \{0,-u^\es\}$ and $u^+=\max \{0,u^\es\}$ and we get 
\begin{align}
      \int_{\Omega^\es} \partial_t u^\es (u^{\es})^- dx+\int_{\Omega^\es} D^\es \nabla u^\es \nabla (u^{\es})^- dx - \frac{1}{\es}\int_{\Omega^\es} B^\es P(u^\es) \nabla (u^{\es})^- dx\nonumber\\
      =  \int_{\Omega^\es} f^\es (u^{\es})^- dx-\int_{{\Gamma_N^\es}} g_N(u^\es) (u^{\es})^- d\sigma.\label{pos1}
\end{align}

Using   (A1), we have
{
\begin{align}
      \frac{1}{2}\frac{d}{dt}||(u^\es)^-||^2_{L^2(\Omega^\es)}+\theta\int_{\Omega^\es} &| \nabla (u^\es)^- |^2 dx + \frac{1}{\es}\int_{\Omega^\es} B^\es P((u^\es)) \nabla (u^{\es})^- dx\nonumber\\
      &\leq -\int_{\Omega^\es} f^\es (u^{\es})^- dx+\es\int_{{\Gamma_N^\es}} g_N((u^\es)^-) (u^{\es})^- d\sigma. \label{pos2}
\end{align}
}

After an integration by parts with respect to the space variable and \ref{A2} , we get
{ \begin{align}
   { \frac{1}{\es} }\int_{\Omega^\es}B^{\es} P(-(u^\es)^-)\nabla (u^\es)^- dx &=\frac{-1}{\es}\int_{\Omega^\es}B^{\es}\left( (u^\es)^-(1+C^\es(u^\es)^-)\nabla (u^\es)^-\right) dx\nonumber\\
   &=\frac{-1}{\es}\int_{\Omega^\es}B^{\es} \nabla\frac{1}{2}((u^\es)^-)^2-B^{\es}C^\es\frac{1}{3}\nabla((u^\es)^-)^3 dx\nonumber\\
   &=\frac{1}{\es}\int_{\Omega^\es}\nabla B^{\es} \frac{1}{2}((u^\es)^-)^2+\nabla (B^{\es}C^\es)\frac{1}{3}((u^\es)^-)^3 dx\nonumber\\
   &\hspace{1cm}+\frac{1}{\es} \int_{\Gamma_N^\es}B^{\es}\cdot n \left(\frac{1}{2}((u^\es)^-)^2+C^\es\frac{1}{3}((u^\es)^-)^3\right) d\sigma\nonumber\\
    &=0\label{pos3}.
\end{align}}
Since 
{\begin{equation}\nonumber
    \int_{{\Gamma_N^\es}} g_N(u^\es) (-u^{\es})^- d\sigma=\int_{{\Gamma_N^\es}} g_N((-u^{\es})^-) (-u^{\es})^- d\sigma,
\end{equation}
by \eqref{gnc2} and \ref{A3} we get
\begin{align}
   -\int_{\Omega^\es} f^\es (u^{\es})^- dx+ \int_{{\Gamma_N^\es}} g_N((-u^{\es})^-) (-u^{\es})^- d\sigma\leq 0.\label{pos4}
\end{align}}

Combining \eqref{pos2}, \eqref{pos3} and \eqref{pos4}, we obtain
\begin{align}
     \frac{1}{2}\frac{d}{dt}||(u^\es)^-||^2_{L^2(\Omega^\es)}\leq  0\label{pos6}
\end{align}
As a direct application of Grönwall's inequality (see Appendix B of \cite{evans2010partial})  on \eqref{pos6}, we conclude 
\begin{align}
    \|(u^\es)^-\|^2_{L^2(\Omega^\es)}\leq 0.\nonumber
\end{align}
Hence $u^\es\geq 0$ a.e. in $\Omega^\es$ and for all $t\in (0,T)$.
\end{proof}
\vspace{.5cm}
\section{Derivation of the upscaled model}\label{upscalemodel}
In this section we pass $\es\rightarrow 0$ to the homogenization limit and derive the corresponding upscaled equation and effective coefficients.
\subsection{Extension to fixed domain}
\begin{lemma}\label{extension}
If $u^\es\in H^1(\Omega^\es)\cap L^\infty(\Omega^\es)$, where $\Omega^\es$ defined as in \eqref{ome}. Then there exist a constant $C>0$ and an extension of $u^\es$ to $H^1(\mathbb{R}^2)$ denoted by  $\Tilde{u}^\es$ such that
\begin{align}
{\Tilde{u}^\es}{|_{\Omega^\es}}&={u}^\es,\label{ext1}\\
 \| \Tilde{u}^\es\|_{L^2(\mathbb{R}^2)}&\leq C \| {u}^\es\|_{L^2(\Omega^\es)},\label{ext2}\\
    \|\nabla \Tilde{u}^\es\|_{L^2(\mathbb{R}^2)}&\leq C \|\nabla {u}^\es\|_{L^2(\Omega^\es)},\label{ext3}\\
    \| \Tilde{u}^\es\|_{L^\infty(\mathbb{R}^2)}&\leq C \| {u}^\es\|_{L^\infty(\Omega^\es)}.
    \end{align}
\end{lemma}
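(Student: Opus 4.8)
The plan is to construct the extension cell-by-cell using a single reference extension operator on the standard cell $Z$ and then rescale and patch. First I would recall the classical extension result for perforated domains (in the spirit of Cioranescu--Saint Jean Paulin): since $\partial Z_0$ is $C^2$ and $\partial Z_0 \cap \partial Y = \emptyset$, the obstacle $Z_0$ sits strictly inside the unit cell $Y$, so there exists a bounded linear extension operator $\mathcal{P} : H^1(Z) \to H^1(Y)$ with $\|\mathcal{P} v\|_{L^2(Y)} \le c\,\|v\|_{L^2(Z)}$ and $\|\nabla \mathcal{P} v\|_{L^2(Y)} \le c\,\|\nabla v\|_{L^2(Z)}$, where $c$ depends only on $Z$; one standard way to get the gradient bound with the $L^2$-norm of $\nabla v$ alone (rather than the full $H^1$-norm) is to subtract the mean of $v$ over $Z$, extend, and add the mean back, using a Poincaré--Wirtinger inequality on the connected Lipschitz domain $Z$. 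The key additional point I need is that this operator can be taken to preserve $L^\infty$ bounds: because the obstacle is compactly contained in $Y$, one may realize the extension by, e.g., a reflection/Sobolev extension across the $C^2$ surface $\partial Z_0$ followed by multiplication by a fixed cutoff, and such constructions satisfy $\|\mathcal{P} v\|_{L^\infty(Y)} \le c\,\|v\|_{L^\infty(Z)}$; alternatively one can invoke a Stein-type extension operator, which is simultaneously bounded on $L^2$, $H^1$ (with the mean-subtraction trick) and $L^\infty$.

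Next I would scale and translate: for each $k = (k_1,k_2) \in \mathbb{Z}^2$ let $Z^\es_k := \es(Z + k_1 e_1 + k_2 e_2)$ and $Y^\es_k := \es(Y + k_1 e_1 + k_2 e_2)$, so that $\{Y^\es_k\}$ tiles $\mathbb{R}^2$ and $\Omega^\es = \bigcup_k Z^\es_k$. On each cell, define $\Tilde{u}^\es$ on $Y^\es_k$ by transplanting $\mathcal{P}$ acting on $u^\es$ restricted to $Z^\es_k$ via the affine change of variables $y \mapsto \es(y+k)$; concretely $(\Tilde{u}^\es)(\es(y+k)) := \big(\mathcal{P}[u^\es(\es(\cdot+k))]\big)(y)$ for $y \in Y$. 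Since the change of variables is an isometry up to the scaling factor $\es$, the $L^2$, $\nabla$, and $L^\infty$ bounds pass to the scaled cell with the same constant $c$: $\|\Tilde{u}^\es\|_{L^2(Y^\es_k)} \le c\|u^\es\|_{L^2(Z^\es_k)}$, $\|\nabla \Tilde{u}^\es\|_{L^2(Y^\es_k)} \le c\|\nabla u^\es\|_{L^2(Z^\es_k)}$, and $\|\Tilde{u}^\es\|_{L^\infty(Y^\es_k)} \le c\|u^\es\|_{L^\infty(Z^\es_k)}$. Crucially, because $u^\es \in H^1(\Omega^\es)$ has matching traces across the interfaces $\es(\partial Y + k)$ (those faces lie in $\Omega^\es$, not on the obstacle boundary), and because $\mathcal{P}v = v$ on $Z$ including on $\partial Y \cap \partial Z = \partial Y$, the cellwise pieces agree on the shared faces $\partial Y^\es_k \cap \partial Y^\es_{k'}$; hence $\Tilde{u}^\es$ has no jumps across cell interfaces and $\Tilde{u}^\es \in H^1(\mathbb{R}^2)$, with $\Tilde{u}^\es|_{\Omega^\es} = u^\es$, which is \eqref{ext1}.

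Finally I would sum over $k$: squaring and adding the cellwise $L^2$ and gradient estimates gives $\|\Tilde{u}^\es\|_{L^2(\mathbb{R}^2)}^2 = \sum_k \|\Tilde{u}^\es\|_{L^2(Y^\es_k)}^2 \le c^2 \sum_k \|u^\es\|_{L^2(Z^\es_k)}^2 = c^2 \|u^\es\|_{L^2(\Omega^\es)}^2$, which is \eqref{ext2}, and likewise \eqref{ext3}; the $L^\infty$ bound is immediate as a supremum over $k$. I expect the main obstacle to be the careful verification of two points: first, that the reference extension operator $\mathcal{P}$ can genuinely be chosen to be simultaneously bounded on $H^1$ with the pure-gradient estimate \emph{and} on $L^\infty$ — this is where the $C^2$ regularity of $\partial Z_0$ and the separation $\partial Z_0 \cap \partial Y = \emptyset$ are used, to localize near the obstacle and apply a smooth extension plus cutoff without touching $\partial Y$; and second, the $H^1(\mathbb{R}^2)$ conformity of the glued function, i.e. checking that no spurious interfacial terms appear, which follows from $\mathcal{P}$ being the identity on (a neighborhood of) $\partial Y$ so that neighboring cell contributions match continuously in the trace sense. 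The scaling and summation steps are then routine.
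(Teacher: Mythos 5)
Your proof is correct, but it follows a different route from the paper's. You build the extension \emph{directly and globally}: a single reference operator $\mathcal{P}:H^1(Z)\to H^1(Y)$ on the standard cell (reflection across the $C^2$ surface $\partial Z_0$ plus a cutoff, with the mean-subtraction/Poincar\'e--Wirtinger trick for the pure-gradient bound and the observation that this construction is also bounded on $L^2$ and $L^\infty$ separately), rescaled to each cell $Y^\es_k$, glued via the matching of traces across cell faces (which works because $\mathcal{P}$ is the identity on $Z$ and hence near $\partial Y$), and summed over $k$. The paper instead truncates to the expanding bounded domains $\Omega^\es_R=(-\es R,\es R)^2\cap\Omega^\es$, invokes the extension theorem of Acerbi et al.\ on each $\Omega^\es_R$ with an $R$-independent constant, arranges the extensions to be consistent as $R$ grows ($\Tilde{u}^\es_{R+N}|_{\Omega^\es_R}=\Tilde{u}^\es_R$), and passes to the limit with Fatou's lemma. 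Your cell-by-cell construction is the classical Cioranescu--Saint Jean Paulin argument; it is more self-contained, avoids the limiting step entirely, and makes transparent exactly where the hypotheses $\partial Z_0\in C^2$ and $\partial Z_0\cap\partial Y=\emptyset$ enter (a point the paper only remarks on after the lemma). What the paper's route buys is robustness: by delegating to the Acerbi-type theorem on bounded perforated domains it would also cover geometries where the perforations are not isolated within a single cell, at the price of having to verify the $R$-uniformity and consistency of the truncated extensions. One small item you should make explicit: after mean subtraction, the pure $L^2$ bound $\|\mathcal{P}v\|_{L^2(Y)}\le c\|v\|_{L^2(Z)}$ (with no gradient term on the right) still holds because the underlying reflection operator is itself bounded $L^2(Z)\to L^2(Y)$ and $\|v-\bar v\|_{L^2(Z)}\le\|v\|_{L^2(Z)}$; without this remark the naive estimate would leave a residual $\es\|\nabla u^\es\|_{L^2}$ term after rescaling.
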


\begin{proof}
Define $\Omega_R^\es :=\left((-\es R,+\es R)\times (-\es R,+\es R)\right)\cap \Omega^\es $, where $R\in \mathbb{N}$, clearly $\Omega_R^\es \cap \Omega_0^\es =\emptyset$. 
Let $u_R^\es:=u_{|_{\Omega_R^\es}}^\es$. Since $u^\es\in H^1(\Omega^\es)$ we have $u_R^\es\in H^1({\Omega_R^\es})$. By using Theorem 2.1 of \cite{ACERBI1992481}  there exist a $\Tilde{u}\in H^1((-\es R,+\es R)\times (-\es R,+\es R))$ and $C>0$ which is independent of $R$ such that 
\begin{align}
{\Tilde{u}_R^\es}{|_{\Omega_R^\es}}&={u}_R^\es,\label{exe0}\\
 \|\Tilde{u}_R^\es\|_{L^2((-\es R,+\es R)\times (-\es R,+\es R))} &\leq C \|u_R^\es\|_{L^2(\Omega_R^\es)},\label{exe1}\\
 \|\nabla\Tilde{u}_R^\es\|_{L^2((-\es R,+\es R)\times (-\es R,+\es R))} &\leq C \|\nabla u_R^\es\|_{L^2(\Omega_R^\es)},\label{exe2}\\
 \|\Tilde{u}_R^\es\|_{L^\infty((-\es R,+\es R)\times (-\es R,+\es R))}& \leq C \|u_R^\es\|_{L^\infty(\Omega^\es)},\nonumber\\
 & \leq C \|u^\es\|_{L^\infty(\Omega^\es)}\label{exe3}.
\end{align}
The inequalities \eqref{exe1} and \eqref{exe2} follows directly from Theorem 2.1 of \cite{ACERBI1992481}.

The inequality \eqref{exe3} follows from the fact that the proof of the extension  operator contains a standard reflection argument, hence it preserves the $L^\infty$ bound in the extended regions(see Theorem 9.7 of \cite{brezis2011functional}). 
\par Now,  we prove the following identity
\begin{align}
    {\Tilde{u}_{R+N}^\es}{|_{\Omega_R^\es}}={\Tilde{u}_R^\es}\;\;\; \mbox{on} \;\;\;\Omega_R^\es \label{exe4}
\end{align}
for all $N\in \mathbb{N}$. 
From the definition of $u_{R}^\es$ and $u_{R+1}^\es$, we have 
\begin{align}
    u_{R+1}^\es|_{\Omega_R^\es}=u_{R}^\es.
\end{align}
We define the extension of $u_{R+1}^\es$ in such a way that, if $\Tilde{u}_R^\es$ is the extension of $ u_R^\es$, then
\begin{align}
    {\Tilde{u}_{R+1}^\es}{|_{\Omega_R^\es}}=\Tilde{u}_R^\es\nonumber
\end{align}
and in $(-\es (R+1),\es (R+1))^2\backslash (-\es R,\es R)^2$, we extend $u_{R+1}$ using a reflection argument similar to Theorem 9.7 of \cite{brezis2011functional} and \cite{ACERBI1992481}. Now inductively we get \eqref{exe4}.

\par We define extension of $u^\es$ on $\mathbb{R}^2$ as 
$\Tilde{u}^\es$ where $\Tilde{u}^\es$ is defined as for any $x\in \mathbb{R}^2$ there exists $R\in \mathbb{N}$ such that $x\in (-\es R,+\es R)\times (-\es R,+\es R))$ and 
\begin{align}
    \Tilde{u}^\es(x)= {\Tilde{u}_R^\es}(x).\label{linf}
\end{align}
By identity \eqref{exe4}, $\Tilde{u}^\es(x)$ is well defined function on $\mathbb{R}^2$.
Using Fatou's Lemma, \eqref{exe1} and \eqref{ext2},  we have
\begin{align}
    \int_{\mathbb{R}^2}(\Tilde{u}^\es)^2dx &=\int_{\mathbb{R}^2}\lim_{R\rightarrow\infty}\chi_{((-\es R,+\es R)\times (-\es R,+\es R))} (\Tilde{u}^\es)^2dx\nonumber\\
    &=\int_{\mathbb{R}^2}\lim_{R\rightarrow\infty}\chi_{((-\es R,+\es R)\times (-\es R,+\es R))} (\Tilde{u}_R^\es)^2dx\nonumber\\
   & \leq \liminf_{R\rightarrow\infty} \int_{\mathbb{R}^2}\chi_{((-\es R,+\es R)\times (-\es R,+\es R))} (\Tilde{u}_R^\es)^2dx\nonumber\\
  & \leq C \|u_R^\es\|_{L^2(\Omega_R^\es)}^2\nonumber\\
  &\leq C \|u^\es\|_{L^2(\Omega^\es)}^2\nonumber.
\end{align}
Similarly, we prove 
\begin{align}
    \int_{\mathbb{R}^2}|\nabla \Tilde{u}^\es|^2dx &=\int_{\mathbb{R}^2}\lim_{R\rightarrow\infty}\chi_{((-\es R,+\es R)\times (-\es R,+\es R))} |\nabla \Tilde{u}^\es|^2dx\nonumber\\
    &=\int_{\mathbb{R}^2}\lim_{R\rightarrow\infty}\chi_{((-\es R,+\es R)\times (-\es R,+\es R))} |\nabla \Tilde{u}_R^\es|^2dx\nonumber\\
   & \leq \liminf_{R\rightarrow\infty} \int_{\mathbb{R}^2}\chi_{((-\es R,+\es R)\times (-\es R,+\es R))} |\nabla \Tilde{u}_R^\es|^2 dx\nonumber\\
  & \leq C \|\nabla {u}_R^\es\|_{L^2(\Omega_R^\es)}^2\nonumber\\
  &\leq C \|\nabla u^\es\|_{L^2(\Omega^\es)}^2\nonumber.
\end{align}
The inequality
   \begin{equation}\nonumber
      \| \Tilde{u}^\es\|_{L^\infty(\mathbb{R}^2)}\leq C \| {u}^\es\|_{L^\infty(\Omega^\es)} 
   \end{equation}
    follows from \eqref{exe3} and \eqref{linf}.
\end{proof}

It should be noted that our proof of the extension Lemma \ref{extension} relies on the fact that at the level of the standard
cell $Z$, the obstacle does not touch the external boundary of $Z$.

\subsection{Strong convergence}

In this section we prove that the sequence of solutions to the microscopic problem \eqref{wf}  strongly converges to a limit formulated in moving co-ordinates. To obtain the wanted result, we adapt to our setup  similar techniques as discussed also in \cite{hutridurga},\cite{marusik2005} and \cite{ijioma2019fast}.

\begin{lemma}\label{lemmaF}
Let $f\in L^2(Z), g\in L^2(\Gamma_N)$ be given functions. Then the boundary value problem
\begin{align}
     -\Delta_y v=f \hspace{.4cm}&\mbox{on} \hspace{.4cm} &  Z,\nonumber\\
   \nabla v\cdot n=g \hspace{.4cm}&\mbox{on} \hspace{.4cm} &  \Gamma_N,\nonumber\\
    v\,\, &\mbox{is Z-periodic},&&\nonumber
\end{align}
 has a unique solution  $v\in H^1_{\#}(Z)/\mathbb{R}$ if and only if the compatibility condition
\begin{equation}\nonumber
    \int_Z f\, dy=\int_{\partial Z} g\,d\sigma_y
\end{equation}
is satisfied.
\end{lemma}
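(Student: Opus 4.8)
The plan is to recognize this as the classical Fredholm-alternative statement for the Neumann/periodic Laplacian on the perforated cell $Z$, and to prove it by the standard variational (Lax–Milgram on a quotient space) argument. First I would fix the functional framework: work in the Hilbert space $V := H^1_{\#}(Z)/\mathbb{R}$, equipped with the norm $\|v\|_V := \|\nabla v\|_{L^2(Z)}$, which is indeed a norm on the quotient because of the Poincaré–Wirtinger inequality on the connected bounded Lipschitz domain $Z$ (recall $\partial Z_0$ is $C^2$ and disjoint from $\partial Y$, so $Z$ is a nice domain and the inequality holds after quotienting out constants). The weak formulation of the problem is: find $v \in V$ such that
\begin{equation}\nonumber
\int_Z \nabla v \cdot \nabla \phi \, dy = \int_Z f\,\phi\,dy + \int_{\Gamma_N} g\,\phi\,d\sigma_y \quad \text{for all } \phi \in V,
\end{equation}
where one checks that the right-hand side is well-defined on the quotient precisely when the compatibility condition $\int_Z f\,dy = \int_{\partial Z} g\,d\sigma_y$ holds — because $\partial Z$ in the periodic setting is just $\Gamma_N$ (the outer faces $\partial Y$ are identified by periodicity and contribute nothing), so testing against the constant function $\phi \equiv 1$ on both sides forces exactly that identity.

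Second, for the "if" direction I would verify the hypotheses of the Lax–Milgram theorem on $V$. The bilinear form $a(v,\phi) = \int_Z \nabla v \cdot \nabla \phi\,dy$ is clearly bounded, and coercivity $a(v,v) = \|\nabla v\|_{L^2(Z)}^2 = \|v\|_V^2$ is immediate from the choice of norm. The linear functional $\ell(\phi) = \int_Z f\phi\,dy + \int_{\Gamma_N} g\phi\,d\sigma_y$ is bounded on $H^1_{\#}(Z)$ by Cauchy–Schwarz together with the trace inequality $\|\phi\|_{L^2(\Gamma_N)} \le C\|\phi\|_{H^1(Z)}$, and, as noted, when the compatibility condition holds it descends to a bounded functional on the quotient $V$ (it is independent of the representative, and on $V$ one may control $\|\phi\|_{H^1(Z)}$ — modulo its mean, which is irrelevant — by $\|\phi\|_V$ via Poincaré–Wirtinger). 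Lax–Milgram then yields a unique $v \in V$ solving the weak problem; standard elliptic regularity (or simply interpreting the weak identity with smooth test functions) shows $v$ solves $-\Delta_y v = f$ in $Z$ and $\nabla v \cdot n = g$ on $\Gamma_N$ in the appropriate weak sense, with $Z$-periodicity built into the space.

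Third, for the "only if" direction: if a solution $v \in H^1_{\#}(Z)/\mathbb{R}$ exists, then integrating the equation $-\Delta_y v = f$ over $Z$ and using the divergence theorem gives $\int_Z f\,dy = -\int_{\partial Z}\nabla v \cdot n\,d\sigma_y$; the contributions over the identified outer faces $\partial Y$ cancel pairwise by periodicity, leaving $-\int_{\Gamma_N}\nabla v\cdot n\,d\sigma_y = -\int_{\Gamma_N} g\,d\sigma_y$, hence $\int_Z f\,dy = \int_{\partial Z} g\,d\sigma_y$. Finally, uniqueness in $V$ is already guaranteed by Lax–Milgram (the difference of two solutions lies in $V$ and satisfies $a(w,w)=0$, so $\nabla w = 0$, so $w$ is constant, i.e. zero in the quotient). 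I expect the only genuinely delicate point — really a bookkeeping point rather than a deep one — to be identifying $\partial Z$ correctly in the periodic cell and justifying that the outer-boundary fluxes cancel, so that "$\int_{\partial Z} g\,d\sigma_y$" legitimately means "$\int_{\Gamma_N} g\,d\sigma_y$"; everything else is the textbook Neumann-Laplacian Fredholm argument. I would keep the write-up short, citing a standard reference (e.g. the homogenization monographs already in the bibliography) for the Poincaré–Wirtinger and trace inequalities rather than reproving them.
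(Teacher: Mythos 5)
Your proof is correct and is precisely the standard argument the paper invokes: the paper's own proof is a one-line appeal to the Fredholm alternative, and your Lax--Milgram-on-the-quotient argument (coercivity via Poincar\'e--Wirtinger, compatibility from testing with constants, necessity from the divergence theorem with periodic cancellation on $\partial Y$) is the usual way to make that precise. One bookkeeping caveat: with the paper's convention that $n$ on $\Gamma_N$ points outward from the pore space $Z$ (i.e.\ it is the outward normal of $Z$ there), testing the weak form with $\phi\equiv 1$ gives $\int_Z f\,dy+\int_{\Gamma_N}g\,d\sigma_y=0$, so the necessary condition you actually derive differs by a sign from the displayed identity $\int_Z f\,dy=\int_{\partial Z}g\,d\sigma_y$ that you then assert; this mismatch originates in the lemma's statement itself and is immaterial for the paper's applications, where $g\equiv 0$, but your write-up should not silently pass from $\int_Z f\,dy=-\int_{\Gamma_N}g\,d\sigma_y$ to the stated form.
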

\begin{proof}
The proof follows via standard argument involving Fredholm alternative.
\end{proof}

\vspace{0.4cm}

\noindent We define
\begin{equation}\label{bstar}
    B^* e_i:=\dfrac{\int_YB(y)e_idy}{|Z|},
\end{equation}
\begin{align}
    \Omega^{\es}(t):=\left\{x+\frac{B^*t}{\es}:x\in \Omega^\es\right\},\label{omees}
\end{align}
\begin{equation}\label{ves}
    v^\es(t,x):=u^\es \left(t,x+\dfrac{B^*t}{\es}\right).
\end{equation}
We define $\{e_j\}, j\in \mathbb{Z}^2$ as orthonormal
basis for $L^2((0,1)^2)$ with compact support and $C^2$ differentiable. Related to $e_j$, we define 
\begin{align}
    e_{j,k}(x):= e_j(x-k),\\
    q_{j,k}(t,x):= e_{j,k}\left(x-\dfrac{B^*t}{\es}\right).
\end{align}
Note that $e_{j,k}$ and $q_{j,k}$ forms orthonormal basis for $L^2(\mathbb{R}^2)$. We have 
\begin{align}
    v^\es(t,x)=\sum_{j\in \mathbb{N},k\in \mathbb{Z}^2}v^\es_{jk}(t)e_{jk}(x),\\
    \Tilde{v}^\es(t,x)=\sum_{j\in \mathbb{N},k\in \mathbb{Z}^2}\Tilde{v}^\es_{jk}(t)e_{jk}(x),
\end{align}
where  $\Tilde{v}^\es(t,x)$ is the extension of $v^\es(t,x)$ on $\mathbb{R}^2$ as defined in Lemma \ref{extension} and 
\begin{align}
    v^\es_{jk}(t):=\int_{\Omega^{\es}(t)}v^\es(t,x)e_{jk}(x) dx,\label{vjk}\\
    \Tilde{v}^\es_{jk}(t):=\int_{\mathbb{R}^2}\Tilde{v}^\es(t,x)e_{jk}(x) dx.\label{vjktild}
\end{align}

\begin{lemma}\label{lemma4}
Assume \ref{A1}--\ref{A5} hold. Then for all $\delta>0$, there exist a $R_\delta>0$ such that 
\begin{align}
    \left\|u^\es\left(t,x+\dfrac{B^*t}{\es}\right)\right\|_{L^2(\Omega^{\es}(t)\backslash\Omega(R_\delta,\es))}\leq \delta\label{comp}
\end{align}
where $\Omega(R_\delta,\es):=\left(\Omega^\es\cap (-R_\delta,R_\delta)^2\right).$
\end{lemma}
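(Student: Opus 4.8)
The plan is to prove a uniform-in-$\es$ tightness estimate for the solution family written in moving coordinates, exploiting the fact that the source terms $f^\es$ and the initial data $g$ are compactly supported (assumptions \ref{A3} and \ref{A5}) together with the $\es$-independent energy bounds from Theorem \ref{energy}. The key point is that although the convective term $\frac{1}{\es}B^\es P(u^\es)$ is large, the drift direction is essentially $B^*$ (by the definition \eqref{bstar} of $B^*$ and the incompressibility in \ref{A2}), and after passing to the moving frame $x\mapsto x+B^*t/\es$ the "residual" transport is of order one. Concretely, I would introduce a smooth cut-off function $\eta_R$ with $\eta_R\equiv 1$ on $(-R,R)^2$, $\eta_R\equiv 0$ outside $(-2R,2R)^2$, and $|\nabla\eta_R|\le C/R$, and test the weak formulation \eqref{wf} with $\phi=u^\es\eta_R^2$ (in the original coordinates, which is equivalent to a cut-off adapted to the moving frame).

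The core estimate proceeds as follows. First I would handle the time derivative and diffusion terms in the standard way, producing $\frac{1}{2}\frac{d}{dt}\|u^\es\eta_R\|^2_{L^2(\Omega^\es)}+\theta\int_{\Omega^\es}|\nabla(u^\es\eta_R)|^2\,dx$ up to cross terms controlled by $\frac{C}{R^2}\|u^\es\|^2_{L^2(0,T;L^2(\Omega^\es))}$, which is $O(1/R^2)$ by \eqref{energy1}. The delicate term is the large drift: using \ref{A2} (namely $\mathrm{div}\,B=0$, $\mathrm{div}(BC)=0$, $B\cdot n_y=0$ on $\Gamma_N$) exactly as in \eqref{zer}, the "self-interaction" part $\frac{1}{\es}\int B^\es P(u^\es)\nabla(u^\es)\eta_R^2$ integrates to a boundary-free expression, and what survives is $\frac{2}{\es}\int B^\es \tilde P(u^\es)\eta_R\nabla\eta_R$ — here the factor $1/\es$ is problematic. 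The resolution is to subtract the mean drift: since $\int_Z B\,dy = |Z|B^*$ and $\int_Z BC\,dy=0$ by \eqref{bc}, the corrector trick (solve a cell problem via Lemma \ref{lemmaF} for $B-B^*$ and for $BC$) lets me write $\frac{1}{\es}B^\es = \mathrm{div}_x(\text{something }O(1)) + \frac{1}{\es}B^*$, and the $\frac{1}{\es}B^*$ piece is precisely what is cancelled by working in the moving frame $\Omega^\es(t)$. This is the step I expect to be the main obstacle: making the change of variables $x\mapsto x+B^*t/\es$ rigorous at the level of the cut-off energy identity, keeping track of how $\eta_R$ and $\Omega^\es(t)$ co-move, and showing the remaining $\es$-dependent terms are genuinely bounded uniformly in $\es$.

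Once the differential inequality
\begin{align}
\frac{d}{dt}\|v^\es\eta_R\|^2_{L^2(\Omega^\es(t))} \le C\|v^\es\eta_R\|^2_{L^2(\Omega^\es(t))} + \frac{C}{R} + C\!\!\int_{\Omega^\es(t)\setminus(-R,R)^2}\!\!(|f^\es| + \text{b.d. terms})\nonumber
\end{align}
is established, I would integrate in time and apply Grönwall. Using that $f^\es$ has support in a fixed compact set (so for $R$ large the last integral vanishes), that the boundary term $\es\int_{\Gamma_N^\es}g_N(u^\es)u^\es\eta_R^2\,d\sigma\le 0$ by \eqref{gnc2}, and that $\|v^\es(0)\eta_R\|_{L^2}=\|g\eta_R\|_{L^2}\to 0$ as $R\to\infty$ uniformly in $\es$ (again by compact support of $g$), I conclude that $\|v^\es\|_{L^2(\Omega^\es(t)\setminus(-R,R)^2)}\le \|v^\es\|_{L^2(\Omega^\es(t)\setminus(-R,R)^2 ; \,1-\eta_R)}$ can be made smaller than any prescribed $\delta$ by choosing $R=R_\delta$ large, which is exactly \eqref{comp}. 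A subtlety worth flagging: one must verify the cut-off test function $u^\es\eta_R^2$ is admissible in \eqref{wf}, which follows since $u^\es\in L^2(0,T;H^1(\Omega^\es))$ and $\eta_R$ is smooth with bounded support, so $u^\es\eta_R^2\in H^1(\Omega^\es)$.
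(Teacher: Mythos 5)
Your proposal follows essentially the same route as the paper: test the weak formulation with a cut-off of $u^\es$ that co-moves with the drift, cancel the $\frac{1}{\es}B^*$ contribution from the time-derivative/moving-frame term against the mean of the drift, absorb $\frac{1}{\es}(B^\es-B^*)$ and $\frac{1}{\es}B^\es C^\es$ via the cell-problem correctors of Lemma \ref{lemmaF} (using \eqref{bc} for the cubic part), discard the boundary term by positivity and \ref{A4}, and use the compact supports of $g$ and $f^\es$ before applying Gr\"onwall. The only blemish is that your cut-off $\eta_R$ is written with the orientation reversed ($\eta_R\equiv 1$ near the origin), whereas your concluding estimates clearly require the complementary cut-off $1-\eta_R$ (vanishing on $(-R,R)^2$ and equal to $1$ far away), which is precisely the paper's $\psi_R$.
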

\begin{proof}

We choose a specific cutoff function $\psi\in C^\infty(\mathbb{R}^2)$  such that 
\begin{align}
 \psi(x)=  \begin{cases}
      0 \;\;\mbox{for}\;\; x\in [-1,1]\\
      1\;\;\mbox{for}\;\; x\in [-2,2]^c
\end{cases} \nonumber
\end{align}
and $0\leq \psi(x)\leq 1$ for $x\in (-2,-1)\cup (1,2)$ . Now, for $x\in \mathbb{R}^2$, we define 
\begin{align}\label{psir}
    \psi_R(x):=\psi\left(\dfrac{|x|}{R}\right),\\
    \psi_R^\es(t,x):=\psi_R\left(x-\dfrac{B^*t}{\es}\right).
\end{align}

Consider the weak formulation \eqref{wf}. Integrating it over $(0,t)$ and choosing $\phi(t,x)=\psi_R^\es(t,x)u^\es(t,x)$, we get
\begin{multline}\label{testpsiu}
     \int_0^t \int_{\Omega^\es} \partial_t u^\es (\psi_R^\es(t,x)u^\es) dxdt+\int_0^t\int_{\Omega^\es} D^\es \nabla u^\es \nabla (\psi_R^\es(t,x)u^\es) dxdt
     \\ -\dfrac{1}{\es} \int_0^t\int_{\Omega^\es} B^\es u^\es(1-C^\es u^\es) \nabla (\psi_R^\es(t,x)u^\es) dxdt
      \\=  \int_0^t\int_{\Omega^\es} f^\es (\psi_R^\es(t,x)u^\es) dxdt
      -\es \int_0^t\int_{{\Gamma_N^\es}} g_N(u^\es) (\psi_R^\es(t,x)u^\es) d\sigma dt.
\end{multline}
Using integration by parts with respect to time variable, we have
\begin{multline}\nonumber
    \int_0^t\int_{\Omega^\es} \partial_t u^\es (\psi_R^\es(t,x)u^\es) dxdt=\dfrac{1}{2}\dfrac{B^*}{\es}\int_0^t\int_{\Omega^\es}  (u^\es)^2 \nabla \psi_R^\es(t,x) dxdt+\dfrac{1}{2} \int_{\Omega^\es} (u^\es(t,x))^2 \psi_R^\es(x,t) dx\\
    -\dfrac{1}{2} \int_{\Omega^\es} (u^\es(0,x))^2 \psi_R^\es(0,x) dx,
\end{multline}
while the second term in the left hand side of \eqref{testpsiu} becomes
\begin{align}
    \int_0^t\int_{\Omega^\es} D^\es \nabla u^\es \nabla (\psi_R^\es(t,x)u^\es) dx=\int_0^t\int_{\Omega^\es} D^\es \nabla u^\es \nabla \psi_R^\es(t,x) u^\es dxdt+\int_0^t\int_{\Omega^\es} D^\es \nabla u^\es \nabla u^\es  \psi_R^\es(x,t) dxdt\nonumber.
\end{align}
We have
\begin{multline}\nonumber
    \dfrac{1}{\es} \int_0^t\int_{\Omega^\es} B^\es u^\es(1-C^\es u^\es) \nabla (\psi_R^\es(t,x)u^\es) dxdt=\dfrac{1}{\es} \int_0^t\int_{\Omega^\es} B^\es u^\es \psi_R^\es(t,x)\nabla u^\es dxdt\\
    +\dfrac{1}{\es} \int_0^t\int_{\Omega^\es} B^\es u^\es u^\es \nabla  \psi_R^\es(t,x) dxdt{ -}\dfrac{1}{\es} \int_0^t\int_{\Omega^\es} B^\es C^\es \nabla u^\es \psi_R^\es(t,x)(u^\es)^2 dxdt\\
    { -}\dfrac{1}{\es}\int_0^t\int_{\Omega^\es} B^\es C^\es (u^\es)^3  \nabla \psi_R^\es(t,x) dxdt,
\end{multline}
integration by parts with respect to the space variable and \ref{A2}, we get
\begin{multline}\nonumber
    \dfrac{1}{\es} \int_0^t\int_{\Omega^\es} B^\es u^\es(1-C^\es u^\es) \nabla (\psi_R^\es(t,x)u^\es) dxdt=-\dfrac{1}{\es} \int_0^t\int_{\Omega^\es} B^\es u^\es \nabla (\psi_R^\es(t,x) u^\es) dxdt\\
    +\dfrac{1}{\es} \int_0^t\int_{\Omega^\es} B^\es u^\es u^\es \nabla  \psi_R^\es(t,x) dxdt+\dfrac{1}{\es} \int_0^t\int_{\Omega^\es} B^\es C^\es  u^\es \nabla(\psi_R^\es(t,x)(u^\es)^2) dxdt\\
    -\dfrac{1}{\es}\int_0^t\int_{\Omega^\es} B^\es C^\es (u^\es)^3  \nabla \psi_R^\es(t,x) dxdt,
\end{multline}
\begin{multline}\nonumber
    \dfrac{1}{\es} \int_0^t\int_{\Omega^\es} B^\es u^\es(1-C^\es u^\es) \nabla (\psi_R^\es(t,x)u^\es) dxdt=
    \dfrac{1}{2\es} \int_0^t\int_{\Omega^\es} B^\es u^\es u^\es \nabla  \psi_R^\es(t,x) dxdt\\
    -\dfrac{1}{2\es}\int_0^t\int_{\Omega^\es} B^\es C^\es (u^\es)^3  \nabla \psi_R^\es(t,x) dxdt,
\end{multline}
\begin{multline}
  \dfrac{1}{2\es}B^*\int_0^t\int_{\Omega^\es}  (u^\es)^2 \nabla \psi_R^\es(t,x) dxdt+\dfrac{1}{2} \int_{\Omega^\es} (u^\es(t,x))^2 \psi_R^\es(t,x)) dx
  -\dfrac{1}{2} \int_{\Omega^\es} (u^\es(0,x))^2 \psi_R^\es(0,x)) dx\\
  +\int_0^t\int_{\Omega^\es} D^\es \nabla u^\es \nabla \psi_R^\es(t,x) u^\es dxdt+\int_0^t\int_{\Omega^\es} D^\es \nabla u^\es \nabla u^\es  \psi_R^\es(t,x) dxdt
  \\- \dfrac{1}{2\es}\int_0^t\int_{\Omega^\es} B^\es u^\es u^\es \nabla  \psi_R^\es(t,x) dxdt
    +\dfrac{1}{2\es}\int_0^t\int_{\Omega^\es} B^\es C^\es (u^\es)^3  \nabla \psi_R^\es(t,x) dxdt
    \\= \int_0^t\int_{\Omega^\es} f^\es (\psi_R^\es(t,x)u^\es) dxdt
      -\es \int_0^t\int_{{\Gamma_N^\es}} g_N(u^\es) (\psi_R^\es(t,x)u^\es) d\sigma dt.\label{st2}
\end{multline}
Using \ref{A5}  and the definition of $\psi_R^\es$, we have
\begin{align}
    \lim_{R\rightarrow \infty}\dfrac{1}{2} \int_{\Omega^\es} (u^\es(0,x))^2 \psi_R^\es(0,x)) dx=0.\label{stini}
\end{align}
By the definition \eqref{psir}, we have
\begin{align}\label{psibound}
    |\nabla \psi_R^\es|, |\nabla\cdot\nabla \psi_R^\es|\leq \frac{C}{R},
\end{align}
where $C>0$ is a constant.
Using \ref{A1}, \eqref{psibound} and Theorem \ref{energy}, we conclude
\begin{align}
\lim_{R\rightarrow\infty}\int_0^t\int_{\Omega^\es} D^\es \nabla u^\es \nabla \psi_R^\es(t,x) u^\es dxdt=0.\label{st3}
\end{align}
Consider the following auxiliary problem: Find $G_i$ (with $i\in \{1,2\}$) such that
\begin{align}
    -\Delta_y G_i&=B^*e_i-B(y)e_i\;\;\mbox{on}\;\;Z,\label{aux1}\\
    \nabla G_i\cdot n&= 0  \;\;\mbox{on}\;\;\Gamma_N,\\
    G_i &\mbox{ is $Z$- periodic}.\label{aux1bc}
\end{align}
Using Lemma \ref{lemmaF}, definition \eqref{bstar}, there exist a weak solution $G_i\in  H^1_{\#}(Z)/\mathbb{R}$ satisfying \eqref{aux1}--\eqref{aux1bc}.
Now using the change of variable $y= \frac{x}{\es}$ and defining $G_i^\es(x):=G_i(\frac{x}{\es})$,  we get
\begin{align}
    -\es^2\Delta_x G_i^\es&=B^*e_i-B^\es(x)e_i\;\;\mbox{on}\;\;\Omega_\es,\label{aux2}\\
   \es \nabla G_i^\es\cdot n&= 0 \;\;\mbox{on}\;\;\Gamma_N^\es.\label{aux2bc}
\end{align}
Thanks to the auxiliary problem \eqref{aux2}--\eqref{aux2bc} we can write
\begin{align}
    \dfrac{1}{2\es}\int_0^t\int_{\Omega^\es}(B^*-B)  (u^\es)^2 \nabla \psi_R^\es(t,x) dxdt&=\sum_{i=1}^{2}\frac{1}{2}\int_0^t\int_{\Omega^\es} \es\nabla G_i^\es \nabla( (u^\es)^2 \dfrac{\partial \psi_R^\es(t,x)}{\partial x_i}) dxdt\label{st1}.
\end{align}
We observe that 
$$\es\nabla G_i^\es=\nabla_y G(\frac{x}{\es}),$$
and 
$$|\nabla( (u^\es)^2 \dfrac{\partial \psi_R^\es(t,x)}{\partial x_i})|\leq |u^\es\nabla u^\es||\nabla\psi_R|+|(u^\es)^2\nabla\cdot\nabla \psi_R|.$$
By  Theorem \ref{energy} and by \eqref{psibound} we can now estimate the right hand side of \eqref{st1}
in the following way
\begin{align}
    \dfrac{1}{2\es}\int_0^t\int_{\Omega^\es}(B^*-B)  (u^\es)^2 \nabla \psi_R^\es(t,x) dxdt&\leq \dfrac{C}{R}.\label{unbnd} 
\end{align}
Using Theorem \ref{posit}, \ref{A4} and  observing that $\psi_R^\es$ is nonnegative, we have
\begin{equation}
     -\es \int_0^t\int_{{\Gamma_N^\es}} g_N(u^\es) (\psi_R^\es(t,x)u^\es) d\sigma dt\leq 0.\label{stbdr}
\end{equation}
To obtain an estimate on the nonlinear term, we consider the following auxiliary problem: Find $H_i$ (with $i\in \{1,2\}$) such that
\begin{align}
    -\Delta_y H_i&=BCe_i\;\;\mbox{on}\;\;Z,\label{aux3}\\
    \nabla H_i\cdot n&= 0 \;\;\mbox{on}\;\;\Gamma_N,\\
    H_i &\mbox{ is $Z$- periodic}\label{aux3bc}.
\end{align}
Using Lemma \ref{lemmaF}, \eqref{bc}, there exists a weak solution $H_i\in  H^1_{\#}(Z)/\mathbb{R}$ satisfying \eqref{aux3}--\eqref{aux3bc}.
Using the change of variable $y=\frac{x}{\es}$ and defining $H_i^\es:=H_i(\frac{x}{\es})$, we have 
\begin{align}
    -\es^2\Delta_x H_i^\es&=BCe_i\;\;\mbox{on}\;\;\Omega_\es,\label{aux4}\\
   \es \nabla H_i^\es\cdot n&= 0 \;\;\mbox{on}\;\;\Gamma_N^\es.\label{aux4bc}
\end{align}
Hence  \eqref{aux4} and \eqref{aux4bc} and following similar steps used to obtain  \eqref{st1}--\eqref{unbnd},  lead to
\begin{align}
    \dfrac{1}{2\es}\int_0^t\int_{\Omega^\es} B^\es C^\es (u^\es)^3  \nabla \psi_R^\es(t,x) dxdt\leq \dfrac{C}{R}.\label{stt}
\end{align}

\noindent Combining \eqref{st2}, \eqref{stini}, \eqref{st3}, \eqref{unbnd}, \eqref{stbdr} and \eqref{stt}, we get
\begin{equation}
    \lim_{R\rightarrow\infty }\int_{\Omega^\es} (u^\es(t,x))^2 \psi_R^\es(t,x)) dx=0.
\end{equation}
So, for any $\delta>0 $, there exists a $R_\delta$ such that
\begin{equation*}
    \int_{\Omega^\es} (u^\es(t,x))^2\psi_{R_{\delta}}^\es(t,x) ) dx\leq \delta.
\end{equation*}
Finally,  using the change of variable $x\rightarrow \left(x+\dfrac{B^*t}{\es}\right)$, we find
\begin{equation*}
    \int_{\Omega^\es(t)} \left(u^\es\left(t,x+\dfrac{B^*t}{\es}\right)\right)^2\psi_{R_{\delta}}(x) ) dx\leq \delta.
\end{equation*}
Hence, \eqref{comp} is proved.
\end{proof}

\begin{lemma}\label{lemma5}
Assume \ref{A1}--\ref{A5} hold. Then for $p,s\in (0,T)$
\begin{align}
   \left| \int_{\Omega^\es (p)}v^\es(p,x)q_{j,k}-\int_{\Omega^\es (s)}v^\es(s,x)q_{j,k}\right|\leq C\sqrt{p-s},\label{tt6}
\end{align}
where $C$ is a positive constant depending on $j$, $k\in \mathbb{N}$.
\end{lemma}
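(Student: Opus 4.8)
The plan is to write the left-hand side of \eqref{tt6} as $|\Phi(p)-\Phi(s)|$, where $\Phi(t):=\int_{\Omega^\es(t)}v^\es(t,x)\,q_{j,k}(t,x)\,dx$, and to bound it by integrating in time a pointwise-in-$t$ estimate for $\Phi'(t)$. First I would use that the perforated domain $\Omega^\es(t)$ and the test function $q_{j,k}(t,\cdot)$ are obtained from $\Omega^\es$ and $e_{j,k}$ by rigid translations proportional to $t/\es$: applying the translation that identifies $\Omega^\es(t)$ with $\Omega^\es$ (cf.\ \eqref{omees}, \eqref{ves}) recasts $\Phi(t)$ as $\int_{\Omega^\es}u^\es(t,y)\,\phi^\es(t,y)\,dy$, where $\phi^\es(t,\cdot)$ is a smooth, compactly supported rigid translate of $e_{j,k}$ — so $\|\phi^\es(t,\cdot)\|_{C^2}$ is bounded by a constant $C_{j,k}$, uniformly in $\es$ and $t$ — whose speed of translation is synchronized with the drift so that $\partial_t\phi^\es=-\tfrac1\es B^*\cdot\nabla\phi^\es$. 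Since $u^\es\in H^1(0,T;L^2(\Omega^\es))$ and $t\mapsto\phi^\es(t,\cdot)$ is $C^1$ into $L^2(\Omega^\es)$, one has $\Phi(p)-\Phi(s)=\int_s^p\Phi'(t)\,dt$ with $\Phi'(t)=\langle\partial_t u^\es(t),\phi^\es(t,\cdot)\rangle+\int_{\Omega^\es}u^\es\,\partial_t\phi^\es\,dy$.

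Next I would insert the weak formulation \eqref{wf}, which is admissible for a.e.\ $t$ with the $t$-frozen test function $\phi^\es(t,\cdot)\in H^1(\Omega^\es)$, into $\langle\partial_t u^\es,\phi^\es\rangle$. Using $P(u^\es)=u^\es(1-C^\es u^\es)$ from \eqref{p3}, the two $\es^{-1}$-singular contributions of $\Phi'(t)$ — the convective term of \eqref{wf} and $\int u^\es\,\partial_t\phi^\es=-\tfrac1\es\int B^*u^\es\cdot\nabla\phi^\es$ — combine into
\[
\frac1\es\int_{\Omega^\es}\Bigl[(B^\es-B^*)u^\es-B^\es C^\es(u^\es)^2\Bigr]\cdot\nabla\phi^\es\,dy ,
\]
so that the \emph{constant} part $B^*/\es$ of the drift has cancelled and the remaining fluxes have vanishing cell averages (by \ref{A2} and \eqref{bc}). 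For these I would invoke the cell correctors: from \eqref{aux1}--\eqref{aux1bc} in the rescaled form \eqref{aux2}--\eqref{aux2bc} one has $(B^\es-B^*)_i=\es^2\Delta_x G_i^\es$, and from \eqref{aux3}--\eqref{aux3bc} in the rescaled form \eqref{aux4}--\eqref{aux4bc} one has $B^\es C^\es e_i=-\es^2\Delta_x H_i^\es$. Inserting these and integrating by parts once — the $\Gamma_N^\es$-boundary terms vanishing because $\es\nabla G_i^\es\cdot n=\es\nabla H_i^\es\cdot n=0$ there and $\phi^\es$ has compact support — the singular part of $\Phi'(t)$ becomes $-\es\sum_i\int_{\Omega^\es}\nabla G_i^\es\cdot\nabla(u^\es\partial_i\phi^\es)\,dy+\es\sum_i\int_{\Omega^\es}\nabla H_i^\es\cdot\nabla((u^\es)^2\partial_i\phi^\es)\,dy$, in which the dangerous prefactor $\es^{-1}$ has disappeared.

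It then remains to bound each term of $\Phi'(t)$. Here $\es\nabla G_i^\es=(\nabla_y G_i)(\cdot/\es)$ and $\es\nabla H_i^\es=(\nabla_y H_i)(\cdot/\es)$ are bounded in $L^2$ over any fixed bounded set, uniformly in $\es$, since $G_i,H_i\in H^1_\#(Z)$; $u^\es$ is bounded in $L^\infty$ by \eqref{energy4} (and $\ge0$ by Theorem \ref{posit}); $D^\es$ is bounded; $\phi^\es,\nabla\phi^\es,\nabla^2\phi^\es$ are bounded in $L^\infty$ with compact support of $\es$-independent size; $f^\es$ is bounded in $L^2$ by \ref{A3}; and $\es\,\sigma(\Gamma_N^\es\cap\mathrm{supp}\,\phi^\es)\le C_{j,k}$, which together with $|g_N(u^\es)|\le\sup_{[0,M]}|g_N|$ bounds the Robin term. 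The only quantity not controlled pointwise in $t$ is $\|\nabla u^\es(t)\|_{L^2(\Omega^\es)}$, which is bounded only in $L^2(0,T)$ by the energy estimate \eqref{energy1}. Collecting everything gives $|\Phi'(t)|\le C_{j,k}\bigl(1+\|\nabla u^\es(t)\|_{L^2(\Omega^\es)}\bigr)$ for a.e.\ $t$, hence by the Cauchy--Schwarz inequality in time and \eqref{energy1},
\[
|\Phi(p)-\Phi(s)|\le C_{j,k}\int_s^p\bigl(1+\|\nabla u^\es(t)\|_{L^2(\Omega^\es)}\bigr)dt\le C_{j,k}\bigl((p-s)+\sqrt{p-s}\,\bigr)\le C_{j,k}\sqrt{p-s},
\]
which is \eqref{tt6}.

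The hard part is the coupling between the exploding drift and the moving frame: one must recognize that the large constant drift $B^*/\es$ is exactly absorbed by differentiating the co-moving test function $\phi^\es$, and then that the residual $\es^{-1}$ terms $\tfrac1\es(B^\es-B^*)u^\es$ and $\tfrac1\es B^\es C^\es(u^\es)^2$ are of ``double-divergence'' type, so that one integration by parts against the correctors $G_i^\es,H_i^\es$ trades the $\es^{-1}$ scaling of $\nabla G_i^\es,\nabla H_i^\es$ against the $\es$ in front, leaving only quantities controlled by $C_{j,k}(1+\|\nabla u^\es(t)\|_{L^2})$. The fact that $\nabla u^\es$ is bounded only in $L^2$ — not in $L^\infty$ — in time is precisely what forces the $\sqrt{p-s}$ modulus of continuity rather than the Lipschitz rate $(p-s)$.
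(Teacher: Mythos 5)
Your proposal is correct and follows essentially the same route as the paper: rewrite the difference via the Fundamental Theorem of Calculus in the fixed frame so that the $B^*/\es$ drift from differentiating the co-moving test function cancels against the leading part of the convective term, treat the residual $\tfrac1\es(B^\es-B^*)u^\es$ and $\tfrac1\es B^\es C^\es(u^\es)^2$ contributions with the auxiliary cell problems \eqref{aux2}--\eqref{aux2bc} and \eqref{aux4}--\eqref{aux4bc} plus one integration by parts, and obtain the $\sqrt{p-s}$ modulus from Cauchy--Schwarz in time against the $L^2(0,T;L^2)$ bound on $\nabla u^\es$. The only cosmetic difference is that you differentiate $\Phi(t)$ pointwise in $t$ and then integrate, while the paper works directly with the time-integrated weak formulation; the estimates term by term are the same.
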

\begin{proof}
Using the Fundamental Theorem of Calculus, for $p,s\in (0,T)$ we can write
\begin{align}
    \int_{\Omega^\es (p)}v^\es(p,x)e_{j,k}-\int_{\Omega^\es (s)}v^\es(s,x)e_{j,k}=\int_{t}^{p}\dfrac{d}{dt} \int_{\Omega^\es (t)}v^\es(t,x)e_{j,k}.
\end{align}
By the change of variable $x\rightarrow x-\dfrac{B^*t}{\es}$ and the product rule, we get
\begin{align}
    \int_{\Omega^\es (p)}v^\es(p,x)e_{j,k}-\int_{\Omega^\es (s)}v^\es(s,x)e_{j,k}=\int_{s}^{p} \int_{\Omega^\es }\left(\dfrac{d}{dt}u^\es(t,x)q_{j,k}-\dfrac{B^*}{\es}\nabla q_{j,k} u^\es\right) dx dt.\label{tt1}
\end{align}
Choosing now $\phi=q_{j,k}$ as test function in \eqref{wf} and integrating the result from $s$ to $p$ with respect to time variable, we obtain
\begin{align}
      \int_{s}^{p}\int_{\Omega^\es} \partial_t u^\es q_{j,k} dx=-\int_{s}^{p}\int_{\Omega^\es} D^\es &\nabla u^\es \nabla q_{j,k} dxdt +\dfrac{1}{\es}\int_{s}^{p} \int_{\Omega^\es} B^\es u^\es \nabla q_{j,k} dxdt\nonumber\\
      &-\dfrac{1}{\es} \int_{s}^{p}\int_{\Omega^\es} B^\es C^\es (u^\es)^2 \nabla q_{j,k} dxdt+\int_{s}^{p}\int_{\Omega^\es} f^\es q_{j,k} dxdt\nonumber\\&\hspace{2cm}- \es\int_{s}^{p}\int_{{\Gamma_N^\es}} g_N(u^\es) q_{j,k} d\sigma dt.\label{tt2}
\end{align}
Combining \eqref{tt1} and \eqref{tt2} leads to
\begin{multline}\nonumber
    \int_{\Omega^\es (p)}v^\es(p,x)e_{j,k}-\int_{\Omega^\es (s)}v^\es(s,x)e_{j,k}=-\int_{s}^{p}\int_{\Omega^\es} D^\es \nabla u^\es \nabla q_{j,k} dxdt \\
      -\dfrac{1}{\es} \int_{s}^{p}\int_{\Omega^\es} B^\es C^\es (u^\es)^2 \nabla q_{j,k} dxdt+\int_{s}^{p}\int_{\Omega^\es} f^\es q_{j,k} dxdt\\- \es\int_{s}^{p}\int_{{\Gamma_N^\es}} g_N(u^\es) q_{j,k} d\sigma dt+\int_{s}^{p} \int_{\Omega^\es }\dfrac{B^\es-B^*}{\es}\nabla q_{j,k} u^\es dx dt.\label{tt3}
\end{multline}
Using Theorem \ref{energy}, \ref{A1}, and the definition of $q_{j,k}$, we have

\begin{align}
 \left|\int_{s}^{p}\int_{\Omega^\es} D^\es \nabla u^\es \nabla q_{j,k} dxdt\right|&\leq C \int_{s}^{p} \|\nabla u^\es\|_{L^2(\Omega^\es)} \|\nabla q_{j,k}\|_{L^2(\Omega^\es)}dt   \nonumber   \\ 
    &\leq C \|\nabla u^\es\|_{L^2(0,T;L^2(\Omega^\es))}\sqrt{p-s}  \\
   \label{tt5} &\leq C\sqrt{p-s},\nonumber
\end{align}
and
\begin{align}
    \left|\int_{s}^{p}\int_{\Omega^\es} f^\es q_{j,k} dxdt\right|\leq C\sqrt{p-s}.
\end{align}
The trace theorem and \ref{A4}  ensure that
\begin{align}
    \left| \es\int_{s}^{p}\int_{{\Gamma_N^\es}} g_N(u^\es) q_{j,k} d\sigma dt\right|&\leq  C\int_{s}^{p}\int_{{\Gamma_N^\es}\cap supp\{q_{j,k}\}} |u^\es||q_{j,k}| d\sigma dt\nonumber\\
       &\leq C \int_{s}^{p}\|u^\es\|_{L^2(\Gamma_N^\es)} dt\nonumber\\
       &\leq C \int_{s}^{p}\|\nabla u^\es\|_{L^2(\Omega^\es)} dt\nonumber\\
    &\leq C\sqrt{p-s}.
    \end{align}

Since the functions $q_{j,k}$ have compact support, using the auxiliary problem \eqref{aux2}--\eqref{aux2bc}, and the integration by parts with respect to space variable, we obtain

  \begin{align}
    \left|\int_{s}^{p} \int_{\Omega^\es }\dfrac{B^\es-B^*}{\es}\nabla q_{j,k} u^\es dx dt\right|&\leq \sum_{i=1}^{2} \left|\int_{s}^{p} \int_{\Omega^\es }\es \Delta G_i \frac{\partial}{\partial x_i} q_{j,k} u^\es dx dt\right|\nonumber\\
    &\leq \sum_{i=1}^{2} \left|\int_{s}^{p} \int_{\Omega^\es }\es \nabla G_i \nabla(\frac{\partial}{\partial x_i} q_{j,k}) u^\es dx dt\right|\nonumber\\ &+ \left|\int_{s}^{p} \int_{\Omega^\es }\es \nabla G_i \frac{\partial}{\partial x_i} q_{j,k}\nabla u^\es dx dt\right|\nonumber\\
    &\leq C\sqrt{p-s}.
\end{align}

Similarly, using auxiliary problem \eqref{aux4}--\eqref{aux4bc} and Theorem \ref{energy} we get
\begin{align}
    \left| \dfrac{1}{\es} \int_{s}^{p}\int_{\Omega^\es} B^\es C^\es (u^\es)^2 \nabla q_{j,k} dxdt\right|
    \leq C\sqrt{p-s}.\label{tt4}
    \end{align}

Combining the estimates \eqref{tt5}--\eqref{tt4} into \eqref{tt3}, we obtain \eqref{tt6}.

\end{proof}
\begin{theorem}\label{T5}
Assume \ref{A1}--\ref{A5} hold. Then there exists $v_0\in L^2(0,T;L^2(\mathbb{R}^2))$ such that
\begin{align}
    \lim_{\es\rightarrow 0}\int_{0}^{T}\int_{\Omega^{\es}(t)}\left|v^\es - v_0\right|^2dxdt=0,\label{strongm}
\end{align}
where $v^\es$ and $\Omega^\es(t)$ are defined in \eqref{ves} and \eqref{omees}, respectively. 
\end{theorem}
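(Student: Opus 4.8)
The plan is to establish strong $L^2$ convergence of $v^\es$ in moving coordinates by a compactness-in-time / compactness-in-frequency argument, i.e.\ a Fréchet--Kolmogorov / Aubin--Lions type criterion adapted to the moving domains $\Omega^\es(t)$. I will work with the coefficients $\Tilde{v}^\es_{jk}(t)$ of the \emph{extended} function $\Tilde v^\es$ (defined in Lemma~\ref{extension}) against the orthonormal basis $\{e_{jk}\}$ of $L^2(\mathbb{R}^2)$. The point is that, since $\Tilde v^\es$ is bounded in $L^\infty(0,T;L^2(\mathbb{R}^2))$ uniformly in $\es$ by the energy estimate \eqref{energy2} and the extension bound \eqref{ext2}, for each fixed $(j,k)$ the sequence $\Tilde v^\es_{jk}(t)$ is bounded in $L^\infty(0,T)$; and a small computation (comparing $\Tilde v^\es_{jk}$ with $v^\es_{jk}$ via Lemma~\ref{lemma4}, then using Lemma~\ref{lemma5}) shows the $\Tilde v^\es_{jk}$ are equicontinuous in time, in fact $\tfrac12$-Hölder with a constant depending only on $(j,k)$. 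Hence by Arzelà--Ascoli, along a subsequence $\Tilde v^\es_{jk}\to \bar v_{jk}$ uniformly on $[0,T]$ for every $(j,k)$, by a diagonal extraction over the countable index set.

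\textbf{Key steps.} First, record the uniform bounds: $\|\Tilde v^\es\|_{L^\infty(0,T;L^2(\mathbb{R}^2))}\le C$ from \eqref{energy2}, \eqref{ext2}, and the change of variables $x\mapsto x-B^*t/\es$ which is measure-preserving. Second, for each $(j,k)$, combine \eqref{tt6} of Lemma~\ref{lemma5} with the tail estimate \eqref{comp} of Lemma~\ref{lemma4} (applied to control the difference between integrating $v^\es q_{j,k}$ over $\Omega^\es(t)$ and integrating $\Tilde v^\es e_{jk}$ over $\mathbb{R}^2$, using that $q_{j,k}$ has compact support and that $\Omega^\es(t)$ fills out $\mathbb{R}^2$ up to the $\es$-periodic holes whose total measure in any fixed ball is $O(1)$ but whose contribution is controlled because $\Tilde v^\es$ is the energy-preserving extension and $u^\es$ vanishes nowhere in particular—more precisely, one bounds the hole-contribution by the extension estimate and equicontinuity transfers). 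This yields $|\Tilde v^\es_{jk}(p)-\Tilde v^\es_{jk}(s)|\le C_{j,k}\sqrt{|p-s|}$. Third, apply Arzelà--Ascoli plus a diagonal argument to get $\Tilde v^\es_{jk}\to\bar v_{jk}$ in $C([0,T])$ for all $(j,k)$ along a subsequence. Fourth, set $v_0(t,x):=\sum_{j,k}\bar v_{jk}(t)e_{jk}(x)$; Fatou / lower semicontinuity gives $v_0\in L^\infty(0,T;L^2(\mathbb{R}^2))\subset L^2(0,T;L^2(\mathbb{R}^2))$. Fifth, upgrade coefficientwise convergence to strong $L^2$ convergence: by Parseval, $\|\Tilde v^\es(t)-v_0(t)\|_{L^2(\mathbb{R}^2)}^2=\sum_{j,k}|\Tilde v^\es_{jk}(t)-\bar v_{jk}(t)|^2$; split into a finite set of indices $|k|,j\le N$ (which $\to 0$ by the uniform convergence just obtained, after integrating in $t$ and dominated convergence) plus a tail $\sum_{|k|+j>N}$, which is made small uniformly in $\es$ and $t$ using the equi-tightness of $\{\Tilde v^\es(t)\}$ in $L^2$—this is exactly where Lemma~\ref{lemma4}'s spatial decay estimate \eqref{comp} together with the uniform $H^1$ bound \eqref{energy1} (giving equicontinuity of translates, hence control of high frequencies via a Rellich-type argument on each fixed ball) is used. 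Finally, translate back to the moving domain: $\int_0^T\!\int_{\Omega^\es(t)}|v^\es-v_0|^2\,dx\,dt \le \int_0^T\!\int_{\mathbb{R}^2}|\Tilde v^\es-v_0|^2 + (\text{tail from }\Omega^\es(t)^c\text{ near infinity, small by \eqref{comp}})\to 0$.

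\textbf{Main obstacle.} The delicate point is the fifth step: passing from convergence of each Fourier--type coefficient to strong convergence of the functions, i.e.\ showing the high-frequency tail $\sum_{|k|+j>N}|\Tilde v^\es_{jk}(t)|^2$ is small uniformly in $\es$. Bounded\-ness in $L^\infty(0,T;L^2)$ alone does not give this; one genuinely needs the uniform $L^2(0,T;H^1)$ bound \eqref{energy1} to control oscillations at scale $\gtrsim 1$ (the basis $e_j$ is fixed, independent of $\es$, so the $H^1$ bound on $\Tilde v^\es$—which follows from \eqref{energy1} and \eqref{ext3}—directly bounds $\sum_{j,k}(1+|\xi_{j,k}|^2)|\Tilde v^\es_{jk}|^2$ in $L^2(0,T)$, whence the tail is $O(N^{-2})$ uniformly). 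Combined with the spatial tightness from Lemma~\ref{lemma4}, this is a Fréchet--Kolmogorov compactness statement in $L^2(0,T;L^2(\mathbb{R}^2))$; the only non-standard feature is that everything must be done in the moving frame and transported across the perforations using the energy-preserving extension, but the estimates \eqref{comp} and \eqref{tt6} are precisely designed to make this transport harmless. The rest is bookkeeping.
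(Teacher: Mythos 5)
Your proposal is correct and follows essentially the same route as the paper: bounded, time-equicontinuous coefficients against the fixed basis $\{e_{jk}\}$ (via Lemma \ref{lemma5}), Arzel\`a--Ascoli with diagonal extraction, spatial tightness from Lemma \ref{lemma4}, and control of the high-index tail through the uniform $H^1$ bound and a Rellich--Kondrachov argument, before transporting back to $\Omega^\es(t)$. The only detail the paper makes precise that you gloss over is the comparison of $v^\es_{jk}$ with $\tilde v^\es_{jk}$: this is done by writing $\chi_Y(x/\es)-|Y|/|Z|$ as $-\es^2\Delta\Pi^\es$ for a periodic cell corrector, yielding an $O(\es^2)$ discrepancy together with the volume fraction $|Y|/|Z|$, which is exactly the "transport across the perforations" step you describe.
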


\begin{proof}
We first prove that the sequence $\{v^\es_{jk}\}$ has a subsequence that converges to some $\{v^0_{jk}\}$ as $\es\rightarrow 0$. From \eqref{vjk},  we have
\begin{align}
    |v^\es_{jk}(t)|&\leq \int_{\Omega^\es}|v^\es(t,x)e_{jk}(x)|dx\nonumber\\
    &\leq \|v^\es\|_{L^2(\Omega^\es)}\|e_{jk}\|_{L^2(\Omega^\es)}\nonumber\\
    &\leq C.
    \end{align}
    By Lemma \ref{lemma5}, we know that the sequence $\{v^\es_{jk}(t)\}$ is equicontinous, hence by using Arzela-Ascoli Theorem we have that there exists a subsequence (denoted again by $v^\es_{jk}(t)$) such that $v^\es_{jk}(t)$ uniformly converges to some $v^0_{jk}(t)$ on $C([0,T])$. Since $[0,T]$ is a bounded domain, the uniform convergence leads to strong convergence in $L^2(0,T)$.
    We define 
    \begin{align}
    v^0_*(t,x):=\sum_{j\in \mathbb{N},k\in \mathbb{Z}^2}v^0_{jk}(t)e_{jk}(x).\label{t5e10}
\end{align}
\textit{Claim 1}: for fixed $j$ and $k$, there exists a constant $C_{jk}>0$ such that
\begin{align}
    \left|v^\es_{jk}(t)-\dfrac{|Y|}{|Z|}\Tilde{v}^\es_{jk}(t)\right|\leq C_{jk}\es ^2\label{t5e7}.
\end{align}
Proof of \textit{Claim 1}: 
\begin{align}
    v^\es_{jk}(t)-\dfrac{|Y|}{|Z|}\Tilde{v}^\es_{jk}(t)=\int_{\mathbb{R}^2}\left(\chi_{Y}(\frac{x}{\es})-\dfrac{|Y|}{|Z|}\right)\Tilde{v}^\es(t,x)e_{jk}(x)dx,\label{t5e1}
\end{align}
where $\chi_{Y}$ is the characteristic function defined on $Z$ and extended periodically to whole $\mathbb{R}^2$, where $\chi_{Y}(x)=1$ if $x\in Y$, $0$ if $x\in Y_0$. Since $\int_Z\left(\chi_{Y}-\dfrac{|Y|}{|Z|}\right)dx=0$, the following auxiliary problem has a unique weak solution
\begin{align}
    &-\Delta \Pi(y) = \chi_{Y}(y)-\dfrac{|Y|}{|Z|}\hspace{1cm}\mbox{on} \;\;Z,\\
    &\Pi-Z \mbox{periodic}.
\end{align}
Using the change of variable $y=\frac{x}{\es}$, defining $\Pi^\es(x):=\Pi(\frac{x}{\es})$, and stating the problem for whole $\mathbb{R}^2$, we get
\begin{align}
    &-\es^2\Delta \Pi^\es(x) = \chi_{Y}(\frac{x}{\es})-\dfrac{|Y|}{|Z|}\hspace{1cm}\mbox{on} \;\;\mathbb{R}^2\label{t5e2}.
\end{align}
We use \eqref{t5e2} in \eqref{t5e1} and we get
\begin{align}
    \left|v^\es_{jk}(t)-\dfrac{|Y|}{|Z|}\Tilde{v}^\es_{jk}(t)\right|&\leq \es^2\int_{\mathbb{R}^2}\left|\nabla \Pi^\es\nabla(\Tilde{v}^\es(t,x)e_{jk}(x))\right|dx\nonumber\\
    &\leq \es^2\int_{\mathbb{R}^2\cap supp(e_{jk})}\left|\nabla \Pi^\es\nabla(\Tilde{v}^\es(t,x)e_{jk}(x))\right|dx\nonumber\\
    &\leq C_{jk}\es^2\label{t5e3}.
\end{align}

Note that to get the inequality \eqref{t5e3}, we used
\eqref{ext3} and Theorem \ref{energy}.\\ 
\noindent Hence we proved the \textit{Claim 1}.
\par \textit{Claim 2}: for any $\delta>0$ there exists $R_\delta\in \mathbb{N}$ such that 
\begin{align}
    \left\|\Tilde{ v}^\es\chi_{[-R_\delta,R_\delta]^2}-\sum_{|j|,|k|\leq R_\delta}\Tilde{v}^\es_{jk}e_{jk} \right\|_{L^2(0,T;L^2(\mathbb{R}^2))}< \frac{\delta}{5},\label{t5e5}
\end{align}
where $\chi_{[-R_\delta,R_\delta]^2}$ is characteristic function of $[-R_\delta,R_\delta]^2$.\\
Proof of \textit{Claim 2} follows similar arguments as in  Lemma 4 from \cite{marusik2005}. We use Theorem \ref{energy},  Lemma \ref{lemma4}, Lemma \ref{extension} and Rellich–Kondrachov theorem. For details see Lemma 4 from \cite{marusik2005}.
\par Now, from Lemma \ref{lemma4} there exists $N_1\in \mathbb{N}$ such that
\begin{align}
    \left\|{ v}^\es-{ v}^\es\chi_{[-N_1,N_1]^2} \right\|_{L^2(0,T;L^2(\mathbb{R}^2))}< \frac{\delta}{5}\label{t5e12}.
\end{align}
Using the property \eqref{ext1}  for $\Tilde{v}^\es$ and \eqref{t5e5}, we can guarantee that 
\begin{align}
    \left\|{ v}^\es\chi_{[-R_\delta,R_\delta]^2}-\sum_{|j|,|k|\leq R_\delta}\Tilde{v}^\es_{jk}e_{jk} \right\|_{L^2(0,T;L^2(\Omega^{\es}(t)))}< \frac{\delta}{5}\label{t5e6}.
\end{align}
Choosing $\es$ small enough and using \eqref{t5e7}, we have 
\begin{align}
    \left\|\sum_{|j|,|k|\leq R_\delta}\Tilde{v}^\es_{jk}e_{jk}-\dfrac{|Z|}{|Y|}\sum_{|j|,|k|\leq R_\delta}{v}^\es_{jk}e_{jk} \right\|_{L^2(0,T;L^2(\Omega^{\es}(t)))}< \frac{\delta}{5}.\label{t5e8}
\end{align}
Since ${v}^\es_{jk}$ strongly converges to ${v}^0_{jk}$ in $L^2(0,T)$, for small enough $\es$, we are lead to
\begin{align}
    \left\|\sum_{|j|,|k|\leq R_\delta}{v}^\es_{jk}e_{jk}-\sum_{|j|,|k|\leq R_\delta}{v}^0_{jk}e_{jk} \right\|_{L^2(0,T;L^2(\Omega^{\es}(t)))}< \dfrac{|Z|}{|Y|}\frac{\delta}{5}.\label{t5e9}
\end{align}

From \eqref{t5e10}, we get
\begin{align}
    \left\|\sum_{|j|,|k|\leq R_\delta}{v}^0_{jk}e_{jk}-v^0_*(t,x) \right\|_{L^2(0,T;L^2(\Omega^{\es}(t))}< \frac{\delta}{5}.\label{t5e11}
\end{align}
Choosing in  \eqref{t5e12}--\eqref{t5e11} $N_1,R_\delta$ large enough, we obtain for  small enough  $\es$ that 
\begin{align}
    \left\|v^\es(t,x)-\dfrac{|Z|}{|Y|}v^0_*(t,x) \right\|_{L^2(0,T;L^2(\Omega^{\es}(t))}< \frac{\delta}{5}.\label{t5e13}
\end{align}
Finally, we conclude the proof by choosing $v^0(t,x)=\dfrac{|Z|}{|Y|}v^0_*(t,x).$
\end{proof}

{
    Observe that, by using the change of variable $x\rightarrow x-\dfrac{B^*t}{\es}$ into the identity \eqref{strongm}, we obtain the following strong two-scale convergence with drift 
    \begin{align}
        \lim_{\es\rightarrow 0}\int_{0}^{T}\int_{\Omega^{\es}}\left|u^\es(t,x) - v_0(t,x-\dfrac{B^*t}{\es})\right|^2dxdt=0.\label{strongc}
    \end{align}
    This is a useful result that will appear in a several context in the following.
    }

\subsection{Two-scale convergence with drift}
In this section we recall the concept of two-scale convergence with drift.

 \begin{definition}[Two scale convergence with drift r]\label{def2scd}
 Let $r\in \mathbb{R}^2$ and $u^\es\in L^2(0,T;L^2(\Omega^\es)$, we say $u^\es$ two-scale converges with drift $r$ to $u_0$, if for all $\phi\in C_c^\infty((0,T)\times\mathbb{R}^2;C_{\#}^\infty (Z))$ the following identity holds
\begin{align}
    \lim_{\es\rightarrow 0}\int_{(0,T)\times\Omega^\es} u^\es(t,x)\phi(t,x-\dfrac{rx}{\es},\frac{x}{\es})dxdt= \int_{0}^{T}\int_{\mathbb{R}^2}\int_Z u_0(t,x,y)\phi(t,x,y)dydxdt\label{tdriftdef}.
\end{align}
 We denote the convergence as $u^\es\xrightharpoonupdown{$2-drift(r)$}u_0$.
 \end{definition}
 \begin{definition}[Strong two-scale convergence with drift]\label{defs2scd}
        Let $r\in \mathbb{R}^2$ and \\ $u^\es\in L^2(0,T;L^2(\Omega^\es)$. We say that $u^\es$ strongly two-scale converges with the drift $r$ to $u_0$ if and only if  
        \begin{align}
        \lim_{\es\rightarrow 0}\int_{0}^{T}\int_{\Omega^{\es}}\left|u^\es(t,x) - u_0\left(t,x-\dfrac{rt}{\es},\dfrac{x}{\es}\right)\right|^2dxdt=0.\label{strongc2}
    \end{align}
    We denote this convergence by $u^\es\xrightarrow{2-drift(r)}u_0$.
    \end{definition}
 
\subsubsection{Compactness results}
\begin{lemma}\label{oscilationlemma}
 Let $\phi\in L^2((0,T)\times \mathbb{R}^2;C_{\#}(Z))$. Then 
    \begin{align}
        \lim_{\es\rightarrow 0} \int_0^T\int_{\mathbb{R}^2}\phi(t,x-\dfrac{B^*t}{\es},\dfrac{x}{\es})^2dxdt=\int_0^T\int_{\mathbb{R}^2}\int_Z\phi(t,x,y)^2dydxdt
    \end{align}
    holds true.
\end{lemma}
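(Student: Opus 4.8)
The plan is to establish this ``mean-value''/oscillation lemma in moving coordinates by the standard density argument: first verify it for a dense class of explicitly factorized test functions, then use uniform bounds to pass to the general $\phi \in L^2((0,T)\times\mathbb{R}^2; C_\#(Z))$. Concretely, I would first reduce to functions of the form $\phi(t,x,y) = \sum_{\text{finite}} a_\ell(t,x)\, b_\ell(y)$ with $a_\ell \in C_c^\infty((0,T)\times\mathbb{R}^2)$ and $b_\ell \in C_\#(Z)$ (or even $C_\#^\infty(Z)$), which are dense in $L^2((0,T)\times\mathbb{R}^2;C_\#(Z))$; and by bilinearity of $(\phi,\psi)\mapsto \int \phi(t,x-\tfrac{B^*t}{\es},\tfrac{x}{\es})\psi(t,x-\tfrac{B^*t}{\es},\tfrac{x}{\es})$ it suffices to treat a single product $a(t,x)b(y)$, i.e. to show
\begin{align}
\lim_{\es\to 0}\int_0^T\int_{\mathbb{R}^2} a\!\left(t,x-\tfrac{B^*t}{\es}\right)^2 b\!\left(\tfrac{x}{\es}\right)^2 dx\, dt = \int_0^T\int_{\mathbb{R}^2}\int_Z a(t,x)^2 b(y)^2\, dy\, dx\, dt. \nonumber
\end{align}

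For this single-term statement I would, for each fixed $t$, perform the change of variables $x \mapsto x - \tfrac{B^*t}{\es}$, which is a translation with unit Jacobian and leaves $b(x/\es)$ replaced by $b(x/\es + \text{const}_\es)$ — but since $b$ is $Z$-periodic and we may also translate the lattice, the classical Riemann–Lebesgue/periodic-oscillation lemma applies: for fixed $a(t,\cdot)^2 \in C_c^\infty(\mathbb{R}^2)\subset L^1(\mathbb{R}^2)$ one has $\int_{\mathbb{R}^2} a(t,x)^2 b(x/\es + c_\es)^2\, dx \to \big(\int_Z b(y)^2\, dy\big)\int_{\mathbb{R}^2} a(t,x)^2\, dx$ as $\es\to 0$, uniformly in the shift $c_\es$ because the convergence rate in the periodic averaging lemma depends only on the modulus of continuity of $a(t,\cdot)^2$ and on $\|b\|_\infty$, not on the lattice offset. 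Then I would integrate in $t\in(0,T)$ and pass the limit under the $t$-integral using dominated convergence (the integrand is bounded by $\|b\|_\infty^2 \sup_t\|a(t,\cdot)^2\|_{L^1}$, which is finite since $a\in C_c^\infty$). This yields the claim for the dense factorized class.

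Finally, for general $\phi$, I would use the uniform estimate
\begin{align}
\left| \int_0^T\int_{\mathbb{R}^2} \phi\!\left(t,x-\tfrac{B^*t}{\es},\tfrac{x}{\es}\right)^2 dx\, dt \right| \le \int_0^T\int_{\mathbb{R}^2} \sup_{y\in Z}|\phi(t,x,y)|^2\, dx\, dt = \|\phi\|_{L^2((0,T)\times\mathbb{R}^2;C_\#(Z))}^2, \nonumber
\end{align}
which holds both for the $\es$-dependent quantity and for its candidate limit $\int_0^T\int_{\mathbb{R}^2}\int_Z \phi^2$ (by $|Z|\le 1$). Given $\eta>0$, pick a factorized $\phi_\eta$ with $\|\phi-\phi_\eta\|_{L^2(\cdots;C_\#(Z))} < \eta$; split $\phi^2 - \phi_\eta^2 = (\phi-\phi_\eta)(\phi+\phi_\eta)$, bound the corresponding difference of the two oscillating integrals by Cauchy–Schwarz in $L^2((0,T)\times\Omega^\es)$ using $\|\phi\pm\phi_\eta\|_\infty$-type controls, and similarly control the difference of the limits; then send $\es\to 0$ using the already-proven convergence for $\phi_\eta$, and finally let $\eta\to 0$. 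I expect the main technical point to be the uniformity of the periodic-averaging convergence with respect to the $\es$-dependent lattice shift induced by the drift term $B^*t/\es$; this is handled by noting that the standard one-cell estimate $\big|\int_{Q}(b(y/\es)^2 - \bar b^2)\, dy\big| \le C\es\,\mathrm{osc}$ is translation-invariant, so summing over the $O(\es^{-2})$ cells meeting the support of $a(t,\cdot)$ gives a bound independent of the shift. Everything else is routine density and dominated-convergence bookkeeping.
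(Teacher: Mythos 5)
Your proposal is correct in outline, but it is worth noting that the paper does not actually prove this lemma: it simply defers to Proposition 2.6.7 of the cited thesis of Hutridurga. What you have written is essentially the self-contained argument that underlies that reference, namely the standard admissibility proof for test functions in $L^2((0,T)\times\mathbb{R}^2;C_\#(Z))$ (density of finite sums $\sum a_\ell(t,x)b_\ell(y)$, the periodic averaging lemma for a single product, and the uniform-in-$\es$ bound $\int_0^T\int\phi(t,x-\tfrac{B^*t}{\es},\tfrac{x}{\es})^2\,dx\,dt\le\|\phi\|^2_{L^2((0,T)\times\mathbb{R}^2;C_\#(Z))}$ that lets you transfer the limit from the dense class to general $\phi$). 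Your key added observation --- that the drift only produces an $\es$-dependent lattice offset $c_\es=B^*t/\es^2$ after the translation $x\mapsto x-\tfrac{B^*t}{\es}$, and that the one-cell oscillation estimate in the averaging lemma is translation-invariant, hence the convergence is uniform in this offset --- is exactly the point that distinguishes the "with drift" version from the classical one, and you handle it correctly. Two small cautions: (i) the statement as printed integrates over all of $\mathbb{R}^2$ on the left but over the perforated cell $Z=Y\setminus Z_0$ on the right; for the mean value of a unit-cell-periodic function over $\mathbb{R}^2$ one gets $\int_Y$, so strictly the left-hand integral should be taken over $\Omega^\es$ (as it is in every place the lemma is actually used, e.g. in Lemma~\ref{L9}) for $\int_Z$ to be the correct average --- this is an imprecision inherited from the paper's statement rather than a flaw in your argument, but your proof should integrate over $\Omega^\es$ and count only the cells of $\es Z$ to match the claimed right-hand side; (ii) in the final density step the quantity you need to control is the difference of the squares, and your Cauchy--Schwarz splitting via $\phi^2-\phi_\eta^2=(\phi-\phi_\eta)(\phi+\phi_\eta)$ together with the uniform bound does close the argument, so no gap there.
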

\begin{proof}
We refer the reader to Proposition 2.6.7 in \cite{hutridur} for the details leading to a proof of this statement.
\end{proof}
\begin{theorem}\label{compactness}
Let $u^\es\in L^2(0,T;H^1(\Omega^\es))$. Assume there exists a constant $C>0$ independent of $\es$ such that
\begin{align}
    \|u^\es\|_{L^2(0,T;H^1(\Omega^\es))}\leq C.
\end{align}
Then, there exist $u_0\in L^2(0,T;H^1(\mathbb{R}^2))$ and $u_1\in L^2((0,T)\times H^1(\mathbb{R}^2);H_{\#}^1(Z)) $ such that
\begin{align}
    u^\es \xrightharpoonupdown{$2-drift(r)$} u_0,\\
    \nabla u^\es \xrightharpoonupdown{$2-drift(r)$} \nabla_x u_0+\nabla_y u_1.
\end{align}
\end{theorem}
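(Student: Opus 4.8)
The plan is to combine the boundedness hypothesis with the strong two-scale convergence with drift already available from Theorem \ref{T5}, and then to fix the $y$-dependence of the gradient via a standard oscillating test-function argument in moving coordinates. First I would extract, using Lemma \ref{extension}, extensions $\tilde u^\es\in L^2(0,T;H^1(\mathbb{R}^2))$ satisfying the same $\es$-uniform $H^1$ bound; working on the whole space removes the perforation bookkeeping. By the classical two-scale compactness theorem applied in the moving frame $x\mapsto x-B^*t/\es$ (equivalently, by invoking the version of Nguetseng--Allaire compactness adapted to two-scale convergence with drift, see the references cited before Definition \ref{def2scd}), the bound $\|u^\es\|_{L^2(0,T;H^1(\Omega^\es))}\le C$ gives, along a subsequence, $u_0\in L^2((0,T)\times\mathbb{R}^2)$ and a vector field $\xi\in L^2((0,T)\times\mathbb{R}^2\times Z)^2$ with
\begin{align*}
u^\es\xrightharpoonupdown{$2-drift(r)$}u_0,\qquad \nabla u^\es\xrightharpoonupdown{$2-drift(r)$}\xi,
\end{align*}
where $r=B^*$. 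The task is then to identify $\xi=\nabla_x u_0+\nabla_y u_1$ for suitable $u_0,u_1$.

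The second step is to show $u_0$ is independent of $y$ and lies in $L^2(0,T;H^1(\mathbb{R}^2))$, with $\nabla_x u_0$ equal to the $y$-average of $\xi$. For this I would test the two-scale limit of $\nabla u^\es$ against $\es\phi(t,x-B^*t/\es,x/\es)$ for $\phi\in C_c^\infty((0,T)\times\mathbb{R}^2;C^\infty_\#(Z))$ and integrate by parts; the volume term carries a factor $\es$ and vanishes, while the chain rule applied to the moving argument produces, in the limit, $\int\!\!\int\!\!\int u_0\,\mathrm{div}_y\phi\,dy\,dx\,dt=0$, forcing $\partial_{y_i}u_0=0$. Moreover the strong convergence \eqref{strongm}/\eqref{strongc} from Theorem \ref{T5} directly pins the two-scale limit of $u^\es$ to be the $y$-independent function $v_0$, so $u_0=v_0$; this is a convenient shortcut that also gives $u_0\in L^2(0,T;L^2(\mathbb{R}^2))$ for free. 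Testing $\nabla u^\es$ against $\phi(t,x-B^*t/\es,x/\es)$ with $\phi$ now $y$-independent and integrating by parts in $x$ identifies $\int_Z\xi\,dy=\nabla_x u_0$ in the distributional sense, which upgrades $u_0$ to $H^1$ in $x$.

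The third step is the corrector: set $u_1$ via $\nabla_y u_1:=\xi-\nabla_x u_0$ and check this is a genuine $y$-gradient. Testing the two-scale limit relation against divergence-free (in $y$) oscillating test functions $\phi(t,x-B^*t/\es,x/\es)$ with $\phi(t,x,\cdot)\in C^\infty_\#(Z)^2$, $\mathrm{div}_y\phi=0$, $\phi\cdot n_y=0$ on $\Gamma_N$, and passing to the limit after integrating by parts in $x$, one finds $\int\!\!\int\!\!\int(\xi-\nabla_x u_0)\cdot\phi=0$ for all such $\phi$; by the orthogonal decomposition of $L^2(Z)^2$ (de Rham / Helmholtz on the perforated cell with the Neumann condition on $\Gamma_N$) this yields $\xi-\nabla_x u_0=\nabla_y u_1$ for some $u_1\in L^2((0,T)\times\mathbb{R}^2;H^1_\#(Z)/\mathbb{R})$, and a measurable-selection argument gives the stated regularity $u_1\in L^2((0,T)\times H^1(\mathbb{R}^2);H^1_\#(Z))$. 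The main obstacle I anticipate is handling the moving coordinate throughout: every integration by parts mixes $\nabla_x$ of the slow variable evaluated at $x-B^*t/\es$ with $\tfrac1\es\nabla_y$ of the fast variable, so one must carefully track which terms blow up, which are $O(\es)$, and use Lemma \ref{oscilationlemma} to pass to the limit in the products; the perforation (the condition $\phi\cdot n_y=0$ on $\Gamma_N$ and the use of Lemma \ref{extension}) must be threaded through the test-function class without losing the no-flux structure coming from \ref{A2}.
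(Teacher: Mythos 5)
Your outline is correct and is essentially the standard Nguetseng--Allaire compactness argument transplanted to the moving frame: extension to $\mathbb{R}^2$, $y$-independence of the limit via $\es$-scaled oscillating test functions, identification of the $y$-average of the gradient limit, and the de Rham/orthogonality step to produce $u_1$. The paper itself does not prove Theorem \ref{compactness} but defers entirely to \cite{marusik2005}, and that reference carries out precisely the argument you sketch, so there is nothing to contrast. One small caution: your ``shortcut'' of invoking Theorem \ref{T5} to pin down $u_0$ is not available at the stated level of generality, since that theorem concerns the particular solution of the microscopic problem (its proof uses the weak formulation), whereas Theorem \ref{compactness} is asserted for an arbitrary bounded sequence in $L^2(0,T;H^1(\Omega^\es))$; your independent identification of $u_0$ via the $\es\phi$ test functions is the argument that actually does the work.
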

\begin{proof}
For a detailed proof of this compactness result,  see \cite{marusik2005}.
\end{proof}
\begin{theorem}\label{compactnessb}
Let $u^\es\in L^2(0,T;L^2(\Gamma_N^\es))$. Assume there exists a constant $C>0$ independent of $\es$ such that
\begin{align}
   \es \|u^\es\|_{L^2(0,T;L^2(\Gamma_N^\es))}\leq C.
\end{align}
Then, there exists $u_0\in L^2(0,T;L^2(\mathbb{R}^2\times \Gamma_N))$  such that
\begin{align}
   \lim_{\es\rightarrow 0}\es \int_{(0,T)\times\Gamma_N^\es} u^\es(t,x)\phi(t,x-\dfrac{rt}{\es},\frac{x}{\es})dxdt= \int_{0}^{T}\int_{\mathbb{R}^2}\int_{\Gamma_N} u_0(t,x,y)\phi(t,x,y)dydxdt.
\end{align}
\end{theorem}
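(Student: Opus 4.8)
\emph{Proof strategy.} The plan is the by-now standard one for establishing two-scale type convergence on oscillating surfaces: we identify the candidate limit $u_0$ as the Riesz representative of a bounded linear functional acting on admissible oscillating test functions. Fix the drift $r\in\mathbb{R}^2$ (in our application $r=B^*$) and, for $\phi\in C_c^\infty((0,T)\times\mathbb{R}^2;C_\#^\infty(Z))$, set
\[
 L_\es(\phi):=\es\int_{(0,T)\times\Gamma_N^\es}u^\es(t,x)\,\phi\Bigl(t,x-\tfrac{rt}{\es},\tfrac{x}{\es}\Bigr)\,d\sigma_x\,dt .
\]
We must show that, along a subsequence, $L_\es(\phi)\to\int_0^T\int_{\mathbb{R}^2}\int_{\Gamma_N}u_0\,\phi$ with $u_0\in L^2(0,T;L^2(\mathbb{R}^2\times\Gamma_N))$. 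Two ingredients are needed: (i) a boundary analogue of the oscillation Lemma \ref{oscilationlemma}, and (ii) a compactness/representation argument built on the $\es$-uniform bound $\es\|u^\es\|_{L^2(0,T;L^2(\Gamma_N^\es))}\le C$.

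For (i) I would first prove the following boundary oscillation lemma with drift: for every $\phi\in L^2((0,T)\times\mathbb{R}^2;C_\#(Z))$,
\[
 \lim_{\es\to 0}\es\int_0^T\int_{\Gamma_N^\es}\phi\Bigl(t,x-\tfrac{rt}{\es},\tfrac{x}{\es}\Bigr)^2 d\sigma_x\,dt=\int_0^T\int_{\mathbb{R}^2}\int_{\Gamma_N}\phi(t,x,y)^2\,d\sigma_y\,dx\,dt .
\]
This is the surface-measure counterpart of Lemma \ref{oscilationlemma}; the factor $\es$ exactly compensates the $\mathcal{O}(\es^{-1})$ growth of the surface measure of $\Gamma_N^\es$ inside a fixed compact set, so that $\es\,\sigma(\Gamma_N^\es\cap K)\to|K|\,\sigma(\Gamma_N)$. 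It can be obtained either via the periodic unfolding operator on the oscillating boundary, or by adapting the moving-frame argument of \cite{hutridur}: slice $(0,T)$ into time intervals on which $rt/\es$ varies by less than a period, freeze the drift on each slice, apply the classical boundary mean-value lemma, and sum; the boundary-layer contribution near the endpoints of the slices and near $\partial((0,T)\times\mathbb{R}^2)$ is controlled by the compact support of $\phi$.

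With this lemma in hand, Cauchy--Schwarz gives
\[
 |L_\es(\phi)|\le\Bigl(\es\|u^\es\|^2_{L^2(0,T;L^2(\Gamma_N^\es))}\Bigr)^{1/2}\Bigl(\es\int_0^T\int_{\Gamma_N^\es}\phi\bigl(t,x-\tfrac{rt}{\es},\tfrac{x}{\es}\bigr)^2\Bigr)^{1/2}\le C\,\|\phi\|_{L^2((0,T)\times\mathbb{R}^2\times\Gamma_N)}+o(1),
\]
so $\{L_\es\}$ is uniformly bounded on a dense subspace of $L^2((0,T)\times\mathbb{R}^2\times\Gamma_N)$. Since that space is separable, a diagonal extraction over a countable dense set of test functions, combined with the uniform bound, yields a subsequence along which $L_\es(\phi)\to L(\phi)$ for every admissible $\phi$, with $|L(\phi)|\le C\|\phi\|_{L^2((0,T)\times\mathbb{R}^2\times\Gamma_N)}$. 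Because the traces on $\Gamma_N$ of functions in $C_c^\infty((0,T)\times\mathbb{R}^2;C_\#^\infty(Z))$ are dense in $L^2((0,T)\times\mathbb{R}^2\times\Gamma_N)$, $L$ extends to a bounded functional on the whole space, and the Riesz representation theorem produces $u_0\in L^2(0,T;L^2(\mathbb{R}^2\times\Gamma_N))$ with $L(\phi)=\int_0^T\int_{\mathbb{R}^2}\int_{\Gamma_N}u_0\,\phi$, which is exactly the claimed convergence.

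The main obstacle is the boundary oscillation lemma with drift: one must handle simultaneously the $\es$-rescaling of the surface measure on the highly oscillating interface $\Gamma_N^\es$ and the non-periodic shift $rt/\es$ in the slow variable, keeping careful track of the boundary layers where the moving frame leaves the support of $\phi$. Once this is secured, the remaining steps are a soft functional-analytic argument identical in spirit to the proof of classical two-scale compactness on surfaces.
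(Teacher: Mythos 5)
Your argument is the standard oscillating-test-function/Riesz-representation proof of two-scale compactness on periodic surfaces, and this matches the paper, which does not prove the theorem itself but simply cites Proposition~5.4 of \cite{hutridurga}; that proof follows the same route you sketch (a boundary oscillation lemma with drift compensating the $\mathcal{O}(\es^{-1})$ growth of the surface measure, a uniform bound on the functionals $L_\es$, diagonal extraction over a countable dense family, and Riesz representation). The one discrepancy is that your Cauchy--Schwarz step uses the normalization $\es\,\|u^\es\|^2_{L^2(0,T;L^2(\Gamma_N^\es))}\le C$, i.e.\ $\sqrt{\es}\,\|u^\es\|_{L^2(0,T;L^2(\Gamma_N^\es))}\le C$ --- the standard hypothesis, and the one actually available where the theorem is applied (cf.\ \eqref{energy5}) --- rather than the literal bound $\es\,\|u^\es\|_{L^2(0,T;L^2(\Gamma_N^\es))}\le C$ written in the statement, which by itself would not suffice.
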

\begin{proof}
For a detailed proof, see Proposition 5.4 of \cite{hutridurga}.
\end{proof}
\par 
 It is useful to 
    note that the function $v_0$ which we obtained from \eqref{strongc} and $u_0$ coming from \eqref{2drift1} are equal for a.e $(t,x)\in (0,T)\times \mathbb{R}^2$.  To see this, we can argue as follows: 
    for any $\phi\in C_c^\infty((0,T)\times \mathbb{R}^2)$, we have
    \begin{align}
       \int_0^T\int_{\mathbb{R}^2}(u_0-v_0)\phi dxdt&=\lim_{\es\rightarrow 0} \frac{1}{|Z|}\int_0^T\int_{\Omega^\es}\left(u^\es(t,x)-v_0(t,x-\dfrac{B^*t}{\es})\right)\phi(t,x-\dfrac{B^*t}{\es})dxdt\nonumber\\
       &\leq \lim_{\es\rightarrow 0} \frac{1}{|Z|}\|u^\es(t,x)-v_0(t,x,x-\dfrac{B^*t}{\es})\|_{L^2((0,T)\times \Omega^\es)}\|\phi(t,x-\dfrac{B^*t}{\es})\|_{L^2((0,T)\times \Omega^\es)}\nonumber\\
       &=0\nonumber.
    \end{align}
    Hence, we can conclude that $v_0$ and $u_0$ coincide.

\subsection{Limit problem -- Structure of the upscaled model equations}
The main result of this paper is stated in the next Theorem. A connected companion result is Theorem \ref{Cor}.
\begin{theorem}\label{Homlimit}
Assume \ref{A1}--\ref{A5} hold { and $g_N(r)=r $ for all $r\in\mathbb{R}$}. Then the weak solution $u^\es$ to the microscopic problem \eqref{meq}--\eqref{meqic} two-scale converges with the drift $B^*$  to $u^0(t,x)$  as $\es\rightarrow 0$, where $u^0(t,x)$ is the weak solution of the homogenized problem, viz.
\begin{tcolorbox}
\begin{align}
    \partial_t u_0 +\mathrm{div}( -D^*(u_0,W)\nabla_x  u_0)&=\frac{1}{|Z|}\int_Z f\, dy - {\frac{|\Gamma_N|}{|Z|}g_N(u_0) } &\mbox{on}& \hspace{.4cm}  (0,T)\times \mathbb{R}^2, \label{homeq1}\\
    u_0(0)&=g &\mbox{on}& \hspace{1.75cm}  \mathbb{R}^2
    \end{align}
  \begin{align}
    -\nabla_y\cdot D(y) \nabla_y w_i+B(y)(1-2C(y)u_0)\cdot\nabla_y  w_i&=\nabla_y D(y) e_i+B^* e_i-B(y)(1-2C(y)u_0)\cdot  e_i \nonumber\\
    &\hspace{3.5cm}\mbox{on} \hspace{.4cm}  (0,T)\times \mathbb{R}^2\times Z,\label{cell3}\\
    \left( -D(y)\nabla_y w_i+B(y)(1-2C(y)u_0)w_i\right)\cdot n_y &= \left( -D(y)e_i\right)\cdot n_y\nonumber \\
    &\hspace{3.5cm}\mbox{on} \hspace{.4cm} (0,T)\times \mathbb{R}^2\times \Gamma_N,\label{cellbn3}\\
    w_i(t,x,\cdot)\, &\mbox{ is $Z$--periodic},\label{homeqf}
\end{align}
\end{tcolorbox}
where 
\begin{tcolorbox}
\begin{equation}
    B^*e_i:=\dfrac{\int_Z B(y)e_idy}{|Z|}
\end{equation}
\end{tcolorbox}
and 
\begin{tcolorbox}
\begin{multline}\label{effectdiff}
     D^*(u_0,W):=\frac{1}{|Z|}\int_Z D(y)\left(I+\begin{bmatrix}
\frac{\partial w_1}{\partial y_1} & \frac{\partial w_2}{\partial y_1} \\
\frac{\partial w_1}{\partial y_2} & \frac{\partial w_2}{\partial y_2}
\end{bmatrix}\right)\, dy
\\
+\frac{1}{|Z|}B^*\int_Z W(y)^t\, dy-\frac{1}{|Z|}\int_ZB(y)(1-2C(y)u_0) W(y)^t\, dy
 \end{multline}
 \end{tcolorbox}
 for all $t\in (0,T)$ and a.e $x\in \mathbb{R}^2, y\in Z $, with
 \begin{equation}
     W:=(w_1,w_2).
 \end{equation}
\end{theorem}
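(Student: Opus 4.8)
The strategy is the standard two-scale convergence argument, now carried out in moving coordinates, with the extra care required by the exploding factor $1/\es$ in front of the drift. First I would invoke the uniform energy estimates of Theorem \ref{energy} together with the extension Lemma \ref{extension}, so that the extended sequence $\Tilde{u}^\es$ is bounded in $L^2(0,T;H^1(\mathbb{R}^2))\cap L^\infty((0,T)\times\mathbb{R}^2)$ with a bound on $\partial_t u^\es$ in $L^2(0,T;L^2(\Omega^\es))$. Applying the compactness results of Theorem \ref{compactness} and Theorem \ref{compactnessb} with $r=B^*$, I extract (subsequences) $u_0\in L^2(0,T;H^1(\mathbb{R}^2))$ and $u_1\in L^2((0,T)\times\mathbb{R}^2;H^1_\#(Z))$ with $u^\es\xrightharpoonup{2\text{-}drift(B^*)}u_0$ and $\nabla u^\es\xrightharpoonup{2\text{-}drift(B^*)}\nabla_x u_0+\nabla_y u_1$; the strong convergence Theorem \ref{T5} (rephrased as \eqref{strongc}) identifies $v_0=u_0$ and, crucially, lets me pass to the limit in the nonlinear terms $P(u^\es)=u^\es(1-C^\es u^\es)$ and $g_N(u^\es)$ since strong $L^2$-convergence in moving coordinates upgrades the weak two-scale limits of these nonlinearities to the expected ones (here one uses $g_N(r)=r$ and the $L^\infty$ bound so that $C^\es(u^\es)^2$ is controlled).

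The core computation is the passage to the limit in the weak formulation \eqref{wf}. I would test \eqref{wf} with an oscillating test function of drift type, $\phi^\es(t,x)=\varphi(t,x-\tfrac{B^*t}{\es})+\es\,\psi(t,x-\tfrac{B^*t}{\es},\tfrac{x}{\es})$ with $\varphi\in C_c^\infty((0,T)\times\mathbb{R}^2)$ and $\psi\in C_c^\infty((0,T)\times\mathbb{R}^2;C^\infty_\#(Z))$, noting that $\nabla\phi^\es=\nabla_x\varphi+\nabla_y\psi+O(\es)$ in moving coordinates. The delicate point is the drift term $\tfrac{1}{\es}\int_{\Omega^\es}B^\es P(u^\es)\nabla\phi^\es$: I split $P(u^\es)=u^\es-C^\es(u^\es)^2$ and use the divergence-free conditions in \ref{A2} together with the definition \eqref{bstar} of $B^*$ and the compatibility \eqref{bc}. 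Concretely, writing $B^\es=B^*+(B^\es-B^*)$ and exploiting $\mathrm{div}\,B=0$, $B\cdot n_y=0$, the $B^*$-part cancels against the time-derivative contribution produced by the moving frame (this is exactly the mechanism that makes two-scale convergence with drift work), while the remaining $\tfrac{1}{\es}(B^\es-B^*)$-part is handled via the auxiliary cell problems \eqref{aux1}--\eqref{aux1bc} (for the linear piece) and \eqref{aux3}--\eqref{aux3bc} (for the quadratic piece $C^\es(u^\es)^2$), integrating by parts to trade the $1/\es$ for an $\es\,\nabla G_i$ that stays bounded. After these cancellations the drift contributes, in the limit, the terms $B(y)(1-2C(y)u_0)\cdot(\nabla_x u_0+\nabla_y u_1)$ paired against $\nabla_y\psi$ and $\nabla_x\varphi$.

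Next I would separate the two-scale limit equation into its $y$-dependent and $x$-dependent parts. Choosing $\varphi=0$ gives the cell problem: $u_1(t,x,y)$ solves, for a.e.\ $(t,x)$, the elliptic equation $-\nabla_y\cdot(D(y)(\nabla_x u_0+\nabla_y u_1))+B(y)(1-2C(y)u_0)\cdot(\nabla_x u_0+\nabla_y u_1)=0$ in $Z$ with Neumann data $(-D(y)(\nabla_x u_0+\nabla_y u_1))\cdot n_y = 0$ on $\Gamma_N$ wait --- more precisely the boundary term reorganizes as in \eqref{cellbn3}; by linearity $u_1=\sum_i w_i\,\partial_{x_i}u_0$ where $w_i$ solves \eqref{cell3}--\eqref{homeqf}, and Lemma \ref{lemmaF} (Fredholm alternative, using $\int_Z B(y)e_i\,dy=|Z|B^*e_i$ and $\int_Z BC=0$) guarantees solvability. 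Choosing $\psi=0$ and using the $\Gamma_N$-compactness Theorem \ref{compactnessb} for the Robin term yields the macroscopic equation: $|Z|\partial_t u_0+\mathrm{div}_x\big(\int_Z(-D(y)(\nabla_x u_0+\nabla_y u_1)+B(y)(1-2C(y)u_0)u_1\,\text{-type terms})\big)=\int_Z f\,dy-|\Gamma_N|g_N(u_0)$; substituting $u_1=\sum_i w_i\partial_{x_i}u_0$ collapses the flux into $-D^*(u_0,W)\nabla_x u_0$ with $D^*$ exactly \eqref{effectdiff}. Finally, uniqueness of the homogenized problem \eqref{homeq1}--\eqref{homeqf} (a quasilinear parabolic equation with Lipschitz nonlinearity on the bounded range of $u_0$, plus a uniformly elliptic cell problem depending smoothly on $u_0$) shows the whole sequence converges, not merely a subsequence. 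The main obstacle I anticipate is precisely the cancellation in the $1/\es$-drift term: one must verify carefully that the moving-frame time derivative produces exactly $\tfrac{B^*}{\es}\nabla_x$ paired with $u^\es$ so that it annihilates the $\tfrac{1}{\es}B^*\,u^\es\nabla_x\varphi$ contribution, and that the leftover $\tfrac{1}{\es}(B^\es-B^*)$ and the quadratic drift piece are genuinely $O(\es)$ after using the auxiliary correctors $G_i$ and $H_i$ and the energy bounds --- this is where assumptions \ref{A1}, \ref{A2}, \eqref{bc} and the strong convergence \eqref{strongc} all come together.
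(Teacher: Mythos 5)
Your proposal is correct and follows essentially the same route as the paper: the same oscillating test function $\phi_0(t,x-\tfrac{B^*t}{\es})+\es\phi_1(t,x-\tfrac{B^*t}{\es},\tfrac{x}{\es})$, the same cancellation of the $\tfrac{B^*}{\es}$ moving-frame term against the drift, the same auxiliary correctors $G_i$ and $H_i$ from Lemma \ref{lemmaF} to absorb the $\tfrac{1}{\es}(B^\es-B^*)$ and $\tfrac{1}{\es}B^\es C^\es(u^\es)^2$ pieces, and the same use of the strong convergence of Theorem \ref{T5} to pass to the limit in the quasilinear terms before splitting into cell and macroscopic equations via $\varphi\equiv 0$ and $\psi\equiv 0$. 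Your closing remark on upgrading subsequence convergence to full-sequence convergence via uniqueness of the limit problem is a sensible addition not spelled out in the paper.
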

\begin{proof}
Using the compactness result stated in Theorem \ref{compactness} and the energy estimates obtained in Theorem \ref{energy}, we can state that there exist $u_0\in L^2(0,T;H^1(\mathbb{R}^2))$ and $u_1\in L^2((0,T)\times H^1(\mathbb{R}^2);H_{\#}^1(Z)) $ such that
\begin{align}
    u^\es \xrightharpoonupdown{$2-drift(B^*)$} u_0\label{2drift1},\\
    \nabla u^\es \xrightharpoonupdown{$2-drift(B^*)$} \nabla_x u_0+\nabla_y u_1.\label{2drift2}
\end{align}
Take $\phi_0\in C_c^\infty((0,T)\times\mathbb{R}^2)$ and $\phi_1\in C_c^\infty((0,T)\times\mathbb{R}^2)\times C_{\#}^\infty(Z)$. Now, in the weak formulation \eqref{weakformul} of our microscopic problem, we choose $\phi=\phi_0(t,x-\dfrac{B^*t}{\es})+\es \phi_1(t,x-\dfrac{B^*t}{\es},\dfrac{x}{\es})$ to obtain: 
\begin{multline}\label{t7e1}
     \int_0^T\int_{\Omega^\es} \partial_t u^\es \phi_0(t,x-\dfrac{B^*t}{\es}) dxdt+\int_0^T\int_{\Omega^\es} D^\es \nabla u^\es \nabla \phi_0(t,x-\dfrac{B^*t}{\es}) dxdt \\-\dfrac{1}{\es}\int_0^T\int_{\Omega^\es} B^\es u^\es(1-C^\es u^\es) \nabla \phi_0(t,x-\dfrac{B^*t}{\es}) dxdt
     +\es\int_0^T\int_{\Omega^\es} \partial_t u^\es \phi_1(t,x-\dfrac{B^*t}{\es},\dfrac{x}{\es}) dxdt
    \\
    +\es\int_0^T\int_{\Omega^\es} D^\es \nabla u^\es \nabla \phi_1(t,x-\dfrac{B^*t}{\es},\dfrac{x}{\es}) dxdt 
     - \int_0^T\int_{\Omega^\es} B^\es u^\es(1-C^\es u^\es) \nabla \phi_1(t,x-\dfrac{B^*t}{\es},\dfrac{x}{\es}) dxdt
    \\
    =  \int_0^T\int_{\Omega^\es} f^\es \phi_0(t,x-\dfrac{B^*t}{\es}) dxdt- \es\int_0^T\int_{{\Gamma_N^\es}} g_N(u^\es) \phi_0(t,x-\dfrac{B^*t}{\es}) d\sigma dt\\
      \es\int_0^T\int_{\Omega^\es} f^\es \phi_1(t,x-\dfrac{B^*t}{\es},\dfrac{x}{\es}) dx- \es^2\int_0^T\int_{{\Gamma_N^\es}} g_N(u^\es) \phi_1(t,x-\dfrac{B^*t}{\es},\dfrac{x}{\es})d\sigma dt.
\end{multline}
By using the integration by parts and the chain rule, we have 
\begin{multline}\label{t7e2}
    \int_0^T\int_{\Omega^\es} \partial_t u^\es \phi_0(t,x-\dfrac{B^*t}{\es}) dxdt\\
    =-\int_0^T\int_{\Omega^\es}  u^\es \partial_t\phi_0(t,x-\dfrac{B^*t}{\es}) dxdt+ \frac{B^*}{\es}\int_0^T\int_{\Omega^\es}  u^\es \nabla_x\phi_0(t,x-\dfrac{B^*t}{\es}) dxdt.
\end{multline}
We rewrite the next term as
\begin{multline}\label{t7e3}
     \es\int_0^T\int_{\Omega^\es} \partial_t u^\es \phi_1(t,x-\dfrac{B^*t}{\es},\dfrac{x}{\es}) dx\\=-\es\int_0^T\int_{\Omega^\es}  u^\es \partial_t\phi_1(t,x-\dfrac{B^*t}{\es},\dfrac{x}{\es}) dxdt+  B^*\int_{\Omega^\es}  u^\es \nabla_x\phi_1(t,x-\dfrac{B^*t}{\es},\dfrac{x}{\es}) dxdt.\\
\end{multline}
The following calculations rules will be used in the sequel, viz.
\begin{align}
    \nabla(\phi_1(t,x-\dfrac{B^*t}{\es},\dfrac{x}{\es}))&=\nabla_x\phi_1(t,x-\dfrac{B^*t}{\es},\dfrac{x}{\es})+\frac{1}{\es}\nabla_y\phi_1(t,x-\dfrac{B^*t}{\es},\dfrac{x}{\es}),\label{t7e4}\\
    \nabla(\phi_0(t,x-\dfrac{B^*t}{\es}))&=\nabla_x\phi_0(t,x-\dfrac{B^*t}{\es}).
\end{align}
From assumption \ref{A2}, we deduce 
\begin{multline}\label{t7e5}
    \int_0^T\int_{\Omega^\es} B^\es u^\es(1-C^\es u^\es) \nabla \phi_0(t,x-\dfrac{B^*t}{\es}) dxdt\\=-\int_0^T\int_{\Omega^\es} B^\es \nabla u^\es \phi_0(t,x-\dfrac{B^*t}{\es}) dxdt+\int_0^T\int_{\Omega^\es} B^\es C^\es u^\es \nabla u^\es \phi_0(t,x-\dfrac{B^*t}{\es}) dxdt.
\end{multline}
To derive the structure of the cell problem,  we choose in \eqref{t7e1} as test function $\phi_0\equiv 0$. Then from \eqref{t7e2}--\eqref{t7e5}, we are led to
\begin{multline}\label{t7e6}
    -\es\int_0^T\int_{\Omega^\es}  u^\es \partial_t\phi_1(t,x-\dfrac{B^*t}{\es},\dfrac{x}{\es}) dxdt+  B^*\int_0^T\int_{\Omega^\es}  u^\es \nabla_x\phi_1(t,x-\dfrac{B^*t}{\es},\dfrac{x}{\es}) dxdt\\
    +\es\int_0^T\int_{\Omega^\es} D^\es \nabla u^\es \nabla_x \phi_1(t,x-\dfrac{B^*t}{\es},\dfrac{x}{\es}) dxdt+\int_0^T\int_{\Omega^\es} D^\es \nabla u^\es \nabla_y \phi_1(t,x-\dfrac{B^*t}{\es},\dfrac{x}{\es}) dxdt\\
    \int_0^T\int_{\Omega^\es} B^\es(1-2C^\es u^\es)\nabla u^\es  \phi_1(t,x-\dfrac{B^*t}{\es},\dfrac{x}{\es}) dxdt=\es \int_0^T\int_{\Omega^\es}f^\es \phi_1(t,x-\dfrac{B^*t}{\es},\dfrac{x}{\es})\\
    -\es ^2 \int_0^T\int_{\Gamma_N^\es}g_N(u^\es) \phi_1(t,x-\dfrac{B^*t}{\es},\dfrac{x}{\es})d\sigma dt.
\end{multline}
As a direct application of Theorem \ref{energy}, we obtain
\begin{align}
    \lim_{\es\rightarrow 0}\es\int_0^T\int_{\Omega^\es}  u^\es \partial_t\phi_1(t,x-\dfrac{B^*t}{\es},\dfrac{x}{\es}) dxdt=0.\label{t7e7}
\end{align}
Using \eqref{2drift1}, we have
\begin{align}
   \lim_{\es\rightarrow 0} B^*\int_0^T\int_{\Omega^\es}  u^\es \nabla_x\phi_1(t,x-\dfrac{B^*t}{\es},\dfrac{x}{\es}) dxdt= &B^*\int_0^T\int_{\mathbb{R}^2}\int_Z  u_0 \nabla_x\phi_1(t,x,y) dydxdt\nonumber
    \\
    &=-B^*\int_0^T\int_{\mathbb{R}^2}\int_Z  \nabla_xu_0 \phi_1(t,x,y) dxdt.\label{t7e8}
\end{align}
Relying on \ref{A1} and \eqref{2drift2}, we can write

\begin{align}
  \lim_{\es\rightarrow 0} \es\int_0^T\int_{\Omega^\es} D^\es \nabla u^\es \nabla_x \phi_1(t,x-\dfrac{B^*t}{\es},\dfrac{x}{\es}) dxdt&= 0, \\
   \lim_{\es\rightarrow 0}\int_0^T\int_{\Omega^\es} D^\es \nabla u^\es \nabla_y \phi_1(t,x-\dfrac{B^*t}{\es},\dfrac{x}{\es}) dxdt&= \int_0^T\int_{\mathbb{R}^2}\int_Z D^\es (\nabla_xu_0+\nabla_yu_1) \nabla_y \phi_1(t,x,y) dydxdt. \label{t7e9}
\end{align}
Using \ref{A2} together with the  strong convergence result stated in Theorem \ref{T5}, and recalling as well \eqref{2drift2}, we get
\begin{multline}\label{t7e10}
   \lim_{\es\rightarrow 0}\int_0^T\int_{\Omega^\es} B^\es(1-2C^\es u^\es)\nabla u^\es  \phi_1(t,x-\dfrac{B^*t}{\es},\dfrac{x}{\es}) dxdt\\= \int_0^T\int_{\mathbb{R}^2}\int_Z B(y)(1-2C(y) u_0)(\nabla_x u_0+\nabla_yu_1) \phi_1(t,x,y) dydxdt.
\end{multline}
By \ref{A3} and Theorem \ref{energy}, we get
\begin{align}
   \lim_{\es\rightarrow 0}\es \int_0^T\int_{\Omega^\es} f^\es \phi_1(t,x-\dfrac{B^*t}{\es},\dfrac{x}{\es}) dxdt= 0.\label{t7e11}
\end{align}
Using \ref{A4} jointly with Theorem \ref{compactnessb} yields
\begin{align}
     \lim_{\es\rightarrow 0}\es^2\int_0^T\int_{{\Gamma_N^\es}} g_N(u^\es) \phi_1(t,x-\dfrac{B^*t}{\es},\dfrac{x}{\es})d\sigma dt= 0.\label{t7e12}
\end{align}
Now, passing to $\es\rightarrow 0$ in \eqref{t7e6} and using \eqref{t7e7}--\eqref{t7e12}, we obtain
\begin{multline}\label{t7e13}
    -B^*\int_0^T\int_{\mathbb{R}^2}\int_Z  \nabla_xu_0 \phi_1(t,x,y) dydxdt+ \int_0^T\int_{\mathbb{R}^2}\int_Z D^\es \nabla_xu_0 \nabla_y \phi_1(t,x,y) dydxdt\\+\int_0^T\int_{\mathbb{R}^2}\int_Z D^\es \nabla_yu_1 \nabla_y \phi_1(t,x,y) dydxdt+\int_0^T\int_{\mathbb{R}^2}\int_Z B(y)(1-2C(y) u_0)\nabla_x u_0 \phi_1(t,x,y) dydxdt\\
    +\int_0^T\int_{\mathbb{R}^2}\int_Z B(y)(1-2C(y) u_0)\nabla_yu_1 \phi_1(t,x,y) dydxdt=0.
\end{multline}
Looking now at \eqref{t7e13}, the  choice $\phi_1(t,x,y)=\phi_2(t,x)\phi_3(y)$ yields for almost every $(t,x)\in (0,T)\times \mathbb{R}^2$ the identity:
\begin{multline}\label{t7e14}
    -B^*\int_Z  \nabla_xu_0 \phi_3(y) dy+ \int_Z D^\es \nabla_xu_0 \nabla_y \phi_3(y) dy+\int_Z D^\es \nabla_yu_1 \nabla_y \phi_3(y) dy\\+\int_Z B(y)(1-2C(y) u_0)\nabla_x u_0 \phi_3(y) dy
    +\int_Z B(y)(1-2C(y) u_0)\nabla_yu_1 \phi_3(y) dy=0.
\end{multline}
The structure of \eqref{t7e14} allows us to choose further 
\begin{align}\label{cell}
u_1(t,x,y):=W(t,x,y)\cdot \nabla_x u_0(x),
\end{align}
where $W:=(w_1,w_2)$ with $w_i$ (with $i\in \{1,2\}$) solving the following cell problem:
\begin{multline}\label{t7e15}
    -B^*\int_Z  e_i \phi_3(y) dy+ \int_Z D^\es e_i \nabla_y \phi_3(y) dy+\int_Z D^\es \nabla_yw_i \nabla_y \phi_3(y) dy\\+\int_Z B(y)(1-2C(y) u_0) e_i \phi_3(y) dy
    +\int_Z B(y)(1-2C(y) u_0)\nabla_yw_i \phi_3(y) dy=0.
\end{multline}

Note that \eqref{t7e15} is the weak formulation of the cell problem \eqref{cell3}--\eqref{homeqf}.
\par To derive the macroscpic equation \eqref{homeq1}, in \eqref{t7e1} we choose $\phi_1\equiv 0$  and we get
\begin{multline}\label{t7e16}
     -\int_0^T\int_{\Omega^\es}  u^\es \partial_t\phi_0(t,x-\dfrac{B^*t}{\es}) dxdt +\int_0^T\int_{\Omega^\es} D^\es \nabla u^\es \nabla \phi_0(t,x-\dfrac{B^*t}{\es}) dxdt\\     + \int_0^T\int_{\Omega^\es}\frac{B^*-B^\es}{\es}  u^\es \nabla_x\phi_0(t,x-\dfrac{B^*t}{\es}) dxdt +\dfrac{1}{\es} \int_{\Omega^\es} B^\es C^\es (u^\es)^2 \nabla \phi_0(t,x-\dfrac{B^*t}{\es}) dxdt\\
    =  \int_0^T\int_{\Omega^\es} f^\es \phi_0(t,x-\dfrac{B^*t}{\es}) dxdt- \es\int_0^T\int_{{\Gamma_N^\es}} g_N(u^\es) \phi_0(t,x-\dfrac{B^*t}{\es}) d\sigma dt.
\end{multline}
Using \eqref{2drift1}, we get
\begin{align}
  \lim_{\es\rightarrow 0} -\int_0^T\int_{\Omega^\es}  u^\es \partial_t\phi_0(t,x-\dfrac{B^*t}{\es}) dxdt&= -\int_0^T\int_{\mathbb{R}^2}\int_Z  u_0 \partial_t\phi_0(t,x) dydxdt\nonumber \\
   &=|Z|\int_0^T\int_{\mathbb{R}^2}\partial_t u_0 \phi_0(t,x) dxdt.\label{t7e17}
\end{align}
Using \eqref{2drift2} and \ref{A1}, we get
\begin{align}
   \lim_{\es\rightarrow 0}\int_0^T\int_{\Omega^\es} D^\es \nabla u^\es \nabla_x \phi_0(t,x-\dfrac{B^*t}{\es}) dxdt&= \int_0^T\int_{\mathbb{R}^2}\int_Z  D(y) (\nabla_x u^0+\nabla_y u_1) \nabla_x \phi_0(t,x) dxdt.\label{t7e18}
\end{align}
Using the auxiliary problem \eqref{aux2}--\eqref{aux2bc} and \eqref{2drift1}, \eqref{2drift2} we have 
\begin{align}
  \lim_{\es\rightarrow 0}\int_0^T\int_{\Omega^\es}&\frac{B^*-B^\es}{\es}  u^\es \nabla_x\phi_0(t,x-\dfrac{B^*t}{\es}) dxdt=\lim_{\es\rightarrow 0}-\sum_{i=1}^{2}\int_0^T\int_{\Omega^\es}\es \Delta G_i^\es  u^\es \frac{\partial}{\partial x_i}\phi_0(t,x-\dfrac{B^*t}{\es}) dxdt\nonumber\\
  &=\lim_{\es\rightarrow 0}\es\sum_{i=1}^{2}\int_0^T\int_{\Omega^\es} \nabla G_i^\es  \nabla (u^\es \frac{\partial}{\partial x_i}\phi_0(t,x-\dfrac{B^*t}{\es})) dxdt\nonumber\\
  &=\lim_{\es\rightarrow 0}\es\sum_{i=1}^{2}\int_0^T\int_{\Omega^\es}\frac{1}{\es} \nabla_y G_i(\frac{x}{\es})  \nabla u^\es \frac{\partial}{\partial x_i}\phi_0(t,x-\dfrac{B^*t}{\es}) dxdt\nonumber\\
  &\hspace{3cm}+\lim_{\es\rightarrow 0}\es\sum_{i=1}^{2}\int_0^T\int_{\Omega^\es}\frac{1}{\es} \nabla_y G_i(\frac{x}{\es})   u^\es \nabla_x(\frac{\partial}{\partial x_i}\phi_0(t,x-\dfrac{B^*t}{\es})) dxdt\nonumber\\
  &= \sum_{i=1}^{2}\int_0^T\int_{\mathbb{R}^2}\int_Z \nabla_yG_i(y)(\nabla_xu_0+\nabla_yu_1)\frac{\partial \phi_0}{\partial x_i}(t,x)dydxdt\nonumber\\
  &\hspace{3cm}+\sum_{i=1}^{2}\int_0^T\int_{\mathbb{R}^2}\int_Z \nabla_yG_i(y)u_0\nabla_x\left(\frac{\partial \phi_0}{\partial x_i}(t,x)\right)dydxdt\nonumber\\
  &=\sum_{i=1}^{2}\int_0^T\int_{\mathbb{R}^2}\int_Z \nabla_yG_i(y)(\nabla_xu_0+\nabla_yu_1)\frac{\partial \phi_0}{\partial x_i}(t,x)dydxdt\nonumber\\
  &\hspace{3cm}-\sum_{i=1}^{2}\int_0^T\int_{\mathbb{R}^2}\int_Z \nabla_yG_i(y)\nabla_xu_0\left(\frac{\partial \phi_0}{\partial x_i}(t,x)\right)dydxdt\nonumber\\
  &=\sum_{i=1}^{2}\int_0^T\int_{\mathbb{R}^2}\int_Z \nabla_yG_i(y)\nabla_yu_1\frac{\partial \phi_0}{\partial x_i}(t,x)dydxdt\nonumber\\
  &=-\sum_{i=1}^{2}\int_0^T\int_{\mathbb{R}^2}\int_Z \Delta_yG_i(y)u_1\frac{\partial \phi_0}{\partial x_i}(t,x)dydxdt\nonumber\\
  &=\int_0^T\int_{\mathbb{R}^2}\int_Z(B^*-B(y))u_1\nabla_x\phi_0(t,x)dydxdt\nonumber\\
  &=-\int_0^T\int_{\mathbb{R}^2}\int_Z(B^*-B(y))\nabla_x u_1\phi_0(t,x)dydxdt.\label{T7E18}
\end{align}
Using the auxiliary problem \eqref{aux4}--\eqref{aux4bc}, \eqref{2drift1} and \eqref{2drift2}, we obtain 
\begin{align}
  \lim_{\es\rightarrow 0}\int_0^T\int_{\Omega^\es}&\frac{B^\es C^\es}{\es}  (u^\es)^2 \nabla_x\phi_0(t,x-\dfrac{B^*t}{\es}) dxdt=-\lim_{\es\rightarrow 0}\sum_{i=1}^{2}\int_0^T\int_{\Omega^\es}\es \Delta H_i^\es  (u^\es)^2 \frac{\partial}{\partial x_i}\phi_0(t,x-\dfrac{B^*t}{\es}) dxdt\nonumber\\
  &=\lim_{\es\rightarrow 0}\es\sum_{i=1}^{2}\int_0^T\int_{\Omega^\es} \nabla H_i^\es  \nabla ((u^\es)^2 \frac{\partial \phi_0}{\partial x_i}(t,x-\dfrac{B^*t}{\es})) dxdt\nonumber\\
  &={2}\lim_{\es\rightarrow 0}\es\sum_{i=1}^{2}\int_0^T\int_{\Omega^\es}\frac{1}{\es} \nabla_y H_i(\frac{x}{\es})  u^\es \nabla u^\es \frac{\partial \phi_0}{\partial x_i}(t,x-\dfrac{B^*t}{\es}) dxdt\nonumber\\
  &\hspace{3cm}+\lim_{\es\rightarrow 0}\es\sum_{i=1}^{2}\int_0^T\int_{\Omega^\es}\frac{1}{\es} \nabla_y H_i(\frac{x}{\es})   (u^\es)^2 \nabla_x(\frac{\partial \phi_0}{\partial x_i}(t,x-\dfrac{B^*t}{\es})) dxdt\nonumber\\
  = &{2}\sum_{i=1}^{2}\int_0^T\int_{\mathbb{R}^2}\int_Z \nabla_yH_i(y)u_0(\nabla_xu_0+\nabla_yu_1)\frac{\partial \phi_0}{\partial x_i}(t,x)dydxdt \nonumber\\
  &\hspace{3cm}+\sum_{i=1}^{2}\int_0^T\int_{\mathbb{R}^2}\int_Z \nabla_yH_i(y)(u_0)^2\nabla_x\left(\frac{\partial \phi_0}{\partial x_i}(t,x)\right)dydxdt\nonumber\\
  &=2\sum_{i=1}^{2}\int_0^T\int_{\mathbb{R}^2}\int_Z \nabla_yH_i(y)u_0(\nabla_xu_0+\nabla_yu_1)\frac{\partial \phi_0}{\partial x_i}(t,x)dydxdt\nonumber\\
  &\hspace{3cm}{-2\sum_{i=1}^{2}\int_0^T\int_{\mathbb{R}^2}\int_Z \nabla_yH_i(y)u_0\nabla_xu_0\left(\frac{\partial \phi_0}{\partial x_i}(t,x)\right)dydxdt}\nonumber\\
  &={2}\sum_{i=1}^{2}\int_0^T\int_{\mathbb{R}^2}\int_Z \nabla_yH_i(y)u_0\nabla_yu_1\frac{\partial \phi_0}{\partial x_i}(t,x)dydxdt\nonumber\\
  &=-{2}\sum_{i=1}^{2}\int_0^T\int_{\mathbb{R}^2}\int_Z \Delta_yH_i(y)u_0u_1\frac{\partial \phi_0}{\partial x_i}(t,x)dydxdt\nonumber\\
  &={2}\int_0^T\int_{\mathbb{R}^2}\int_ZB(y)C(y)u_0u_1\nabla_x\phi_0(t,x)dydxdt\nonumber\\
  &=-{2}\int_0^T\int_{\mathbb{R}^2}\int_ZB(y)C(y)\nabla_x(u_0 u_1)\phi_0(t,x)dydxdt.\label{t7e19}
\end{align}
Using \ref{A3}, we get
\begin{align}
    \lim_{\es\rightarrow 0}\int_0^T\int_{\Omega^\es} f^\es(x,\dfrac{x}{\es}) \phi_0(t,x-\dfrac{B^*t}{\es}) dxdt= \int_0^T\int_{\mathbb{R}^2}\int_Z f(x,y)dydxdt.\label{t7e20}
\end{align}
Using \ref{A4} and Theorem \eqref{compactnessb},   gives
\begin{align}
    \lim_{\es\rightarrow 0}\es\int_0^T\int_{{\Gamma_N^\es}} g_N(u^\es) \phi_0(t,x-\dfrac{B^*t}{\es}) d\sigma dt&=\lim_{\es\rightarrow 0}\es\int_0^T\int_{{\Gamma_N^\es}} u^\es \phi_0(t,x-\dfrac{B^*t}{\es}) d\sigma dt\nonumber\\
    &=|\Gamma_N|\int_0^T\int_{\mathbb{R}^2}u_0(t,x)\phi_0(t,x)dxdt.\label{btwoscale}
\end{align}
By \eqref{t7e17}--\eqref{t7e20}, the passage to the limit $\es\rightarrow 0$ in  \eqref{t7e16} discovers the weak form
\begin{multline}\label{t7e21}
    |Z|\int_0^T\int_{\mathbb{R}^2}\partial_t u_0 \phi_0(t,x) dxdt+\int_0^T\int_{\mathbb{R}^2}\int_Z  D(y) \nabla_x u_0 \nabla_x \phi_0(t,x) dxdt\\
    +\int_0^T\int_{\mathbb{R}^2}\int_Z  D(y) \nabla_y u_1 \nabla_x \phi_0(t,x) dxdt-\int_0^T\int_{\mathbb{R}^2}\int_Z(B^*-B(y))\nabla_x u_1\phi_0(t,x)dydxdt\\
    -2\int_0^T\int_{\mathbb{R}^2}\int_ZB(y)C(y)\nabla_x(u_0 u_1)\phi_0(t,x)dydxdt=\int_0^T\int_{\mathbb{R}^2}\int_Z f(x,y)\phi_0(t,x)dydxdt\\
    -|\Gamma_N|\int_0^T\int_{\mathbb{R}^2}u_0(t,x)\phi_0(t,x)dxdt.
\end{multline}
Inserting the ansatz \eqref{cell} into \eqref{t7e21}, we can rewrite the last identity as
\begin{multline}\label{t7e23}
    \int_0^T\int_{\mathbb{R}^2}\partial_t u_0 \phi_0(t,x) dxdt+\int_0^T\int_{\mathbb{R}^2} D^*(u_0,W) \nabla_x u_0 \nabla_x \phi_0(t,x) dxdt\\
    =\frac{1}{|Z|}\int_0^T\int_{\mathbb{R}^2}\int_Z f(x,y)\phi_0(t,x)dydxdt-\frac{|\Gamma_N|}{|Z|}\int_0^T\int_{\mathbb{R}^2}u_0(t,x)\phi_0(t,x)dxdt,
\end{multline}
where $D^*(u_0,W)$ defined as \eqref{effectdiff} and \eqref{t7e23} is the weak formulation of \eqref{homeq1}. 
\end{proof}

    It is worth noting that the homogenization limit derived in Theorem \ref{Homlimit} and the corrector convergence result stated in Theorem \ref{Cor} in the next section are still valid if we replace the assumption $g_N(r)=r$ with $g_N(r)= k$, where $k\in \mathbb{R}$ is  fixed arbitrarily.  

\section{Searching for correctors}\label{searchforcor}
In this section, we study the strong convergence of the corrector term obtained from the homogenized limit problem. To prove such corrector-type result, we rely on techniques similar to those used in \cite{allaire2010homogenization}  to prove the strong convergence of solutions to the microscopic problem in $L^2$. We begin with stating  two auxiliary lemmas  which later will be employed to prove the wanted strong convergence result. We omit their proofs since they are straightforward  extensions of classical results related to the concept of two-scale convergence (Theorem 17 of \cite{lukkassen2002two}) and, respectively, to the two-scale convergence with drift (see in particular \cite{allaire2010homogenization}). 

\begin{lemma}\label{sem}
    Let $v^\es\in L^2(0,T;L^2(\Omega^\es)) $ such that $ v^\es \xrightharpoonupdown{$2-drift(B^*)$} v_0$ as $\es \rightarrow 0$ for some $v_0=v_0(t,x,y)\in L^2((0,T)\times \mathbb{R}^2;L_\#^2(Z))$. Then it holds  
    \begin{align}
        \liminf_{\es \rightarrow 0}\int_0^T\int_{\Omega^\es}(v^\es)^2 dxdt\geq \int_0^T\int_{\mathbb{R}^2}\int_Z v_0^2\; dydxdt.
    \end{align}
\end{lemma}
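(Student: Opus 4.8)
The plan is to run the classical square-expansion argument for lower semicontinuity of the $L^2$-norm, transported into the moving frame. Fix an arbitrary test function $\phi\in C_c^\infty((0,T)\times\mathbb{R}^2;C_\#^\infty(Z))$. For every $\es>0$ one trivially has
\begin{equation*}
0\le\int_0^T\!\!\int_{\Omega^\es}\Big(v^\es(t,x)-\phi\big(t,x-\tfrac{B^*t}{\es},\tfrac{x}{\es}\big)\Big)^2\,dxdt ,
\end{equation*}
and expanding the square produces the three contributions $\int_0^T\!\int_{\Omega^\es}(v^\es)^2$, the cross term $-2\int_0^T\!\int_{\Omega^\es}v^\es\,\phi(t,x-\tfrac{B^*t}{\es},\tfrac{x}{\es})$, and the purely oscillatory term $\int_0^T\!\int_{\Omega^\es}\phi(t,x-\tfrac{B^*t}{\es},\tfrac{x}{\es})^2$.

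First I would pass to the limit $\es\to0$ in the last two terms. By the very definition of two-scale convergence with drift (Definition \ref{def2scd}) applied to the hypothesis on $v^\es$, the cross term tends to $-2\int_0^T\!\int_{\mathbb{R}^2}\!\int_Z v_0\,\phi\,dydxdt$. By the oscillation Lemma \ref{oscilationlemma}, in the form where the composed test function is integrated over the perforated set so that the cell average runs over $Z=Y\setminus Z_0$, the oscillatory term tends to $\int_0^T\!\int_{\mathbb{R}^2}\!\int_Z\phi^2\,dydxdt$. Since the whole expression is nonnegative for every $\es$ and the last two limits exist, keeping $\liminf_{\es\to0}$ on the first term only yields, for every admissible $\phi$,
\begin{equation*}
\liminf_{\es\to0}\int_0^T\!\!\int_{\Omega^\es}(v^\es)^2\,dxdt\;\ge\;2\int_0^T\!\!\int_{\mathbb{R}^2}\!\!\int_Z v_0\,\phi\,dydxdt-\int_0^T\!\!\int_{\mathbb{R}^2}\!\!\int_Z\phi^2\,dydxdt .
\end{equation*}

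The last step is to optimize the right-hand side over $\phi$. Since $C_c^\infty((0,T)\times\mathbb{R}^2;C_\#^\infty(Z))$ is dense in $L^2((0,T)\times\mathbb{R}^2;L_\#^2(Z))$ and $v_0$ belongs to the latter space, I pick $\phi_n\to v_0$ in that norm; writing $2\langle v_0,\phi_n\rangle-\|\phi_n\|^2=\|v_0\|^2-\|v_0-\phi_n\|^2$, the right-hand side above converges to $\|v_0\|^2=\int_0^T\!\int_{\mathbb{R}^2}\!\int_Z v_0^2\,dydxdt$, which is exactly the claimed inequality. Equivalently, one notes $\sup_\phi\big(2\langle v_0,\phi\rangle-\|\phi\|^2\big)=\|v_0\|^2_{L^2((0,T)\times\mathbb{R}^2\times Z)}$.

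I expect the only genuinely delicate point to be the identification of the limit of the oscillatory term over the perforated domain: one must make sure that integrating $\phi(t,x-B^*t/\es,x/\es)^2$ against $\chi_{\Omega^\es}$ reproduces the mean value $\int_Z\phi^2\,dy$ over $Z$ and not over $Y$, and that the drift shift $x\mapsto x-B^*t/\es$ is harmless, which is precisely what the drift-adapted statement of Lemma \ref{oscilationlemma} guarantees. Everything else is the standard semicontinuity scheme together with the density of smooth admissible test functions, which is why, as for the analogous two-scale result in \cite{lukkassen2002two}, the routine details can be omitted.
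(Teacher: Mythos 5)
Your proof is correct and is precisely the classical square-expansion argument (expand $\int(v^\es-\phi^\es)^2\ge 0$, pass to the limit in the cross and oscillatory terms, then optimize over $\phi$ by density) that the paper itself invokes by omitting the proof and citing Theorem 17 of \cite{lukkassen2002two} and \cite{allaire2010homogenization}. You also correctly flag the one point worth checking, namely that the oscillation lemma must be applied on the perforated set so that the cell average is taken over $Z$ rather than $Y$.
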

\begin{lemma}\label{semb}
    Let $v^\es\in L^2(0,T;\Gamma_N^\es) $ such that 
    \begin{align}
   \lim_{\es\rightarrow 0}\es \int_{(0,T)\times\Gamma_N^\es} v^\es(t,x)\phi(t,x-\dfrac{B^*t}{\es},\frac{x}{\es})dxdt= \int_{0}^{T}\int_{\mathbb{R}^2}\int_{\Gamma_N} v_0(t,x,y)\phi(t,x,y)dydxdt,
\end{align}
    for some $v_0(t,x,y)\in L^2((0,T)\times \mathbb{R}^2;L_\#^2(\Gamma_N))$.  Then it holds   
    \begin{align}
        \liminf_{\es \rightarrow 0}\es\int_0^T\int_{\Gamma_N^\es}(v^\es)^2 dxdt\geq \int_0^T\int_{\mathbb{R}^2}\int_{\Gamma_N} v_0^2\; dydxdt.
    \end{align}
\end{lemma}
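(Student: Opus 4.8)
The plan is to run the classical ``expand the square'' lower-semicontinuity argument, adapted to the moving coordinates $x\mapsto x-B^*t/\es$ and to the surface measure on $\Gamma_N^\es$. Fix an arbitrary $\phi\in C_c^\infty((0,T)\times\mathbb{R}^2;C_\#^\infty(Z))$ and consider its trace on $\Gamma_N$. Starting from the trivial bound $0\leq \es\int_0^T\int_{\Gamma_N^\es}\bigl(v^\es-\phi(t,x-\tfrac{B^*t}{\es},\tfrac{x}{\es})\bigr)^2\,dxdt$ and expanding, one gets
\begin{align*}
0\leq \es\int_0^T\!\!\int_{\Gamma_N^\es}(v^\es)^2\,dxdt
-2\es\int_0^T\!\!\int_{\Gamma_N^\es}v^\es\,\phi\!\left(t,x-\tfrac{B^*t}{\es},\tfrac{x}{\es}\right)dxdt
+\es\int_0^T\!\!\int_{\Gamma_N^\es}\phi\!\left(t,x-\tfrac{B^*t}{\es},\tfrac{x}{\es}\right)^2dxdt.
\end{align*}

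Next I would pass to the limit term by term. The cross term converges, by the very boundary two-scale convergence with drift assumed in the statement (the same convergence that appears in Theorem \ref{compactnessb}), to $2\int_0^T\int_{\mathbb{R}^2}\int_{\Gamma_N}v_0\,\phi\,dy\,dx\,dt$. For the last, purely oscillatory term I would invoke the $\Gamma_N^\es$-analogue of the oscillation Lemma \ref{oscilationlemma}, namely
\begin{align*}
\lim_{\es\to 0}\es\int_0^T\!\!\int_{\Gamma_N^\es}\phi\!\left(t,x-\tfrac{B^*t}{\es},\tfrac{x}{\es}\right)^2dxdt=\int_0^T\!\!\int_{\mathbb{R}^2}\!\int_{\Gamma_N}\phi(t,x,y)^2\,dy\,dx\,dt,
\end{align*}
which follows from periodic unfolding on the perforation boundaries (the surface measure of $\Gamma_N^\es$ per unit bulk volume is of order $\es^{-1}$, so the prefactor $\es$ yields an order-one limit) combined with the change of variables used throughout Section \ref{upscalemodel}; this is precisely the type of estimate established in Proposition~5.4 of \cite{hutridurga} and Proposition~2.6.7 of \cite{hutridur}. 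Taking $\liminf_{\es\to 0}$ in the displayed inequality then gives, for every admissible $\phi$,
\begin{align*}
\liminf_{\es\to 0}\,\es\int_0^T\!\!\int_{\Gamma_N^\es}(v^\es)^2\,dxdt\;\geq\;2\int_0^T\!\!\int_{\mathbb{R}^2}\!\int_{\Gamma_N}v_0\,\phi\,dy\,dx\,dt-\int_0^T\!\!\int_{\mathbb{R}^2}\!\int_{\Gamma_N}\phi^2\,dy\,dx\,dt.
\end{align*}

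To conclude, I would use that the traces on $\Gamma_N$ of functions in $C_c^\infty((0,T)\times\mathbb{R}^2;C_\#^\infty(Z))$ are dense in $L^2((0,T)\times\mathbb{R}^2;L_\#^2(\Gamma_N))$ and pick $\phi_n\to v_0$ in that space; since the right-hand side is the quadratic functional $\phi\mapsto 2\langle v_0,\phi\rangle-\|\phi\|^2$, whose supremum over $\phi$ equals $\|v_0\|^2$ and is attained in the limit $\phi\to v_0$, the right-hand side tends to $\int_0^T\int_{\mathbb{R}^2}\int_{\Gamma_N}v_0^2\,dy\,dx\,dt$, which is the claim. The only genuinely non-routine ingredient here is the boundary oscillation identity above; once it is available as the $\Gamma_N^\es$-counterpart of Lemma \ref{oscilationlemma}, everything else is the textbook weak lower-semicontinuity trick. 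The companion bulk statement, Lemma \ref{sem}, is proved verbatim in the same way, using Lemma \ref{oscilationlemma} directly in place of its boundary version.
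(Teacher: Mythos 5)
Your proof is correct and coincides with the classical argument the paper itself points to for this lemma (the paper omits the proof, deferring to Theorem 17 of \cite{lukkassen2002two} and \cite{allaire2010homogenization}, which is exactly the expand-the-square weak lower-semicontinuity trick you carry out). The one genuinely needed ingredient, the boundary oscillation identity $\lim_{\es\to 0}\es\int_0^T\int_{\Gamma_N^\es}\phi(t,x-\tfrac{B^*t}{\es},\tfrac{x}{\es})^2\,d\sigma dt=\int_0^T\int_{\mathbb{R}^2}\int_{\Gamma_N}\phi^2\,dy\,dx\,dt$, is correctly identified and correctly attributed to the surface two-scale-with-drift framework of \cite{hutridurga,hutridur}, so nothing is missing.
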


\begin{lemma}\label{L9}
    Let $u^\es\in L^2(0,T;H^1(\Omega^\es))$ satisfy strongly two-scale convergence with drift to $u_0$ in the sense of Defenition \ref{defs2scd}. Then it holds: 
    \begin{align}
        \lim_{\es\rightarrow 0}\|u^\es\|_{L^2((0,T)\times\Omega^\es)}=\|u_0\|_{L^2((0,T)\times\mathbb{R}^2\times Z)}.
    \end{align}
\end{lemma}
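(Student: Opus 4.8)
The idea is to read off the $L^2$-norm convergence directly from the defining identity of strong two-scale convergence with drift and from the oscillation Lemma~\ref{oscilationlemma}. Write $\Phi^\es(t,x):=u_0\bigl(t,x-\tfrac{B^*t}{\es},\tfrac{x}{\es}\bigr)$ for the oscillating profile associated with $u_0$; this is a genuine element of $L^2((0,T)\times\Omega^\es)$ because Definition~\ref{defs2scd} already presupposes that $u_0$ is admissible as a fast profile (in all our applications $u_0\in L^2((0,T)\times\mathbb{R}^2;C_\#(Z))$, and often $u_0$ does not even depend on $y$). The hypothesis is precisely that $\|u^\es-\Phi^\es\|_{L^2((0,T)\times\Omega^\es)}\to 0$ as $\es\to 0$, so the reverse triangle inequality in $L^2((0,T)\times\Omega^\es)$ gives
\begin{equation*}
\Bigl|\,\|u^\es\|_{L^2((0,T)\times\Omega^\es)}-\|\Phi^\es\|_{L^2((0,T)\times\Omega^\es)}\,\Bigr|\le \|u^\es-\Phi^\es\|_{L^2((0,T)\times\Omega^\es)}\longrightarrow 0 .
\end{equation*}
Hence it suffices to prove that $\|\Phi^\es\|_{L^2((0,T)\times\Omega^\es)}\to \|u_0\|_{L^2((0,T)\times\mathbb{R}^2\times Z)}$.

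For this I would invoke the oscillation Lemma~\ref{oscilationlemma}. When $u_0$ is continuous and $Z$-periodic in the fast variable (in particular when it is independent of $y$), $u_0$ belongs to $L^2((0,T)\times\mathbb{R}^2;C_\#(Z))$, and Lemma~\ref{oscilationlemma} applied with $\phi:=u_0$ yields
\begin{equation*}
\lim_{\es\to 0}\int_0^T\!\!\int_{\Omega^\es}\Bigl|u_0\bigl(t,x-\tfrac{B^*t}{\es},\tfrac{x}{\es}\bigr)\Bigr|^2\,dx\,dt=\int_0^T\!\!\int_{\mathbb{R}^2}\!\!\int_Z |u_0(t,x,y)|^2\,dy\,dx\,dt ,
\end{equation*}
which is exactly the square of the identity to be shown. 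If one wants to work only with the minimal regularity on $u_0$ compatible with Definition~\ref{defs2scd}, I would add a density step: given $\delta>0$ choose $\phi_\delta\in C_c^\infty((0,T)\times\mathbb{R}^2;C_\#^\infty(Z))$ with $\|u_0-\phi_\delta\|_{L^2((0,T)\times\mathbb{R}^2\times Z)}<\delta$, set $\Phi^\es_\delta(t,x):=\phi_\delta\bigl(t,x-\tfrac{B^*t}{\es},\tfrac{x}{\es}\bigr)$, and estimate
\begin{equation*}
\Bigl|\,\|\Phi^\es\|_{L^2((0,T)\times\Omega^\es)}-\|u_0\|_{L^2((0,T)\times\mathbb{R}^2\times Z)}\,\Bigr|\le \|\Phi^\es-\Phi^\es_\delta\|_{L^2((0,T)\times\Omega^\es)}+\Bigl|\,\|\Phi^\es_\delta\|-\|\phi_\delta\|\,\Bigr|+\|u_0-\phi_\delta\| ,
\end{equation*}
where the last two norms are taken in $L^2((0,T)\times\Omega^\es)$ and $L^2((0,T)\times\mathbb{R}^2\times Z)$ respectively. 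Lemma~\ref{oscilationlemma} applied to $\phi_\delta$ sends the middle term to $0$, while the same lemma applied to $u_0-\phi_\delta\in L^2((0,T)\times\mathbb{R}^2;C_\#(Z))$ gives $\limsup_{\es\to 0}\|\Phi^\es-\Phi^\es_\delta\|_{L^2((0,T)\times\Omega^\es)}=\|u_0-\phi_\delta\|_{L^2((0,T)\times\mathbb{R}^2\times Z)}<\delta$; letting $\es\to 0$ and then $\delta\to 0$ closes the estimate.

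The one point that needs attention --- and which I regard as the only genuine obstacle --- is the bookkeeping forced by the perforations: Lemma~\ref{oscilationlemma} must be applied with the fast variable evaluated only over the pore phase, that is with $\int_{\Omega^\es}$ in place of $\int_{\mathbb{R}^2}$ (equivalently, with the $Z$-periodic profile extended by $0$ across the solid inclusions), so that the limiting integral carries the correct volume factor $|Z|$ and matches the definition of $\|\cdot\|_{L^2((0,T)\times\mathbb{R}^2\times Z)}$. Granting this, the proof is a routine combination of the triangle inequality, a density step in $C_c^\infty$, and the oscillation lemma, and needs no compactness, monotonicity, or extra energy input.
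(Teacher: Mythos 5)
Your argument is essentially the paper's own proof: the paper also combines the (reverse) triangle/Minkowski inequality in $L^2((0,T)\times\Omega^\es)$ with the strong convergence $\|u^\es-\Phi^\es\|\to 0$ and Lemma~\ref{oscilationlemma} applied to the oscillating profile of $u_0$. Your additional density step and the remark about applying the oscillation lemma over the pore phase $\Omega^\es$ rather than $\mathbb{R}^2$ are more careful than the paper's one-line invocation, but the route is the same.
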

\begin{proof}
By the Minkowski inequality, we have
\begin{align}
    \lim_{\es\rightarrow 0}\|u^\es(t,x)\|_{L^2((0,T)\times\Omega^\es)}\leq \lim_{\es\rightarrow 0}\|&u^\es(t,x)-u_0(t,x-\dfrac{B^*t}{\es},\dfrac{x}{\es})\|_{L^2((0,T)\times\Omega^\es)}\nonumber\\
    &+\lim_{\es\rightarrow 0}\|u_0(t,x-\dfrac{B^*t}{\es},\dfrac{x}{\es})\|_{L^2((0,T)\times\Omega^\es)}.\label{l9e1}
\end{align}
Now, using Lemma \ref{oscilationlemma} and \eqref{strongc} to deal with  \eqref{l9e1} leads to
\begin{align}
    0\leq \left|\lim_{\es\rightarrow 0}\|u^\es(t,x)\|_{L^2((0,T)\times\Omega^\es)}-\|u_0\|_{L^2((0,T)\times\mathbb{R}^2\times Z)}\right|\leq 0.\nonumber
\end{align}
\end{proof}
\par The main result of this section is stated in the next Theorem.
\begin{tcolorbox}
\begin{theorem}\label{Cor}
    Assume \ref{A1}--\ref{A5} hold, $g_N(r)=r $ for all $r\in\mathbb{R}$ {and $f^\es\xrightarrow{2-drift(B^*)} f$ }. Then  
\begin{equation}\label{corrector}
    \lim_{\es \rightarrow 0}\left\|\nabla\left( u^\es(t,\frac{x}{\es})-u_0(t,x-\dfrac{B^*t}{\es})-\es u_1(t,x-\dfrac{B^*t}{\es},\frac{x}{\es})\right)\right\|_{L^{2}(0,T;L^2(\Omega^\es))}=0,
\end{equation}
where $u^\es$ solves the original microscopic problem and $u_0$  and $u_1$ are given cf. \eqref{2drift1} and \eqref{2drift2}, respectively. 
\end{theorem}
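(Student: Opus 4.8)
The plan is to run the Allaire-type energy (corrector) argument, carried out in the moving frame. Throughout write $u_0^\es(t,x):=u_0(t,x-B^*t/\es)$, $u_1^\es(t,x):=u_1(t,x-B^*t/\es,x/\es)$, and for a profile $\zeta(t,x,y)$ let $\zeta^\es$ denote its evaluation at $(t,x-B^*t/\es,x/\es)$; by the chain rule $\nabla\bigl(u^\es-u_0^\es-\es u_1^\es\bigr)=\nabla u^\es-(\nabla_xu_0)^\es-(\nabla_yu_1)^\es-\es(\nabla_xu_1)^\es$. Recall $u_1=W\cdot\nabla_xu_0$, with $W$ the $C^{2,\beta}_\#(Z)$-regular solution of the cell problem \eqref{cell3}--\eqref{homeqf} (elliptic regularity from \ref{A1}--\ref{A2}) and $u_0$ the solution of the quasilinear parabolic problem \eqref{homeq1}. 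The remainder $\es(\nabla_xu_1)^\es$ tends to $0$ in $L^2((0,T)\times\Omega^\es)$ whenever $\nabla_xu_1\in L^2((0,T)\times\mathbb{R}^2;C(\overline Z))$; if the regularity of $u_0$ is insufficient one first proves \eqref{corrector} with $u_0,u_1$ replaced by smooth approximations and passes to the limit using the uniform bound of Theorem \ref{energy}. Hence it suffices to show $R^\es:=\nabla u^\es-(\nabla_xu_0)^\es-(\nabla_yu_1)^\es\to0$ in $L^2(0,T;L^2(\Omega^\es))$, and by the coercivity in \ref{A1} this follows once we establish
\[
E^\es:=\int_0^T\!\!\int_{\Omega^\es}D^\es R^\es\cdot R^\es\,dx\,dt\ \longrightarrow\ 0 .
\]

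Expand $E^\es=T_1^\es-2T_2^\es+T_3^\es$ with $T_1^\es=\int_0^T\!\int_{\Omega^\es}D^\es\nabla u^\es\cdot\nabla u^\es$, $T_2^\es$ the cross term, $T_3^\es=\int_0^T\!\int_{\Omega^\es}D^\es\bigl((\nabla_xu_0)^\es+(\nabla_yu_1)^\es\bigr)\cdot\bigl((\nabla_xu_0)^\es+(\nabla_yu_1)^\es\bigr)$, and put $\Xi:=\nabla_xu_0+\nabla_yu_1$, $A:=\int_0^T\!\int_{\mathbb{R}^2}\!\int_Z D(y)\,\Xi\cdot\Xi\,dy\,dx\,dt$. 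The integrand of $T_3^\es$ is an explicit oscillating function $\Phi(t,x-B^*t/\es,x/\es)$, so $T_3^\es\to A$ by Lemma \ref{oscilationlemma} (componentwise, by polarisation, in its version for the perforated domain $\Omega^\es$). For $T_2^\es$ write $D^\es\nabla u^\es\cdot\Xi^\es=\nabla u^\es\cdot\bigl((D^\es)^{\!\top}\Xi^\es\bigr)$; since $\nabla u^\es$ two-scale converges with drift $B^*$ to $\Xi$ by Theorem \ref{compactness}/\eqref{2drift2}, and $\bigl(D^{\!\top}\Xi\bigr)^\es$ is an admissible oscillating test function (extending the class in Definition \ref{def2scd} by density using the $L^2$-bound on $\nabla u^\es$), one gets $T_2^\es\to\int_0^T\!\int_{\mathbb{R}^2}\!\int_Z\Xi\cdot D^{\!\top}\Xi=A$. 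Thus $E^\es=T_1^\es-A+o(1)$ and the theorem reduces to proving $T_1^\es\to A$.

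This identification is the heart of the proof. Testing \eqref{wf} with $\phi=u^\es$, integrating over $(0,T)$, and using that $\tfrac1\es\int_{\Omega^\es}B^\es P(u^\es)\nabla u^\es\,dx=0$ — this is \eqref{zer}, a consequence of $\mathrm{div}_yB=0$, $B\cdot\nabla_yC=0$, $B\cdot n_y=0$ in \ref{A2}, so the exploding drift contributes nothing to the energy — yields the microscopic energy balance
\[
T_1^\es=\tfrac12\|g\|_{L^2(\Omega^\es)}^2-\tfrac12\|u^\es(T)\|_{L^2(\Omega^\es)}^2+\int_0^T\!\!\int_{\Omega^\es}f^\es u^\es-\es\int_0^T\!\!\int_{\Gamma_N^\es}(u^\es)^2 .
\]
Here $\|g\|_{L^2(\Omega^\es)}^2\to|Z|\,\|g\|_{L^2(\mathbb{R}^2)}^2$ (weak-$*$ convergence of $\chi_{\Omega^\es}$); $\int_0^T\!\int_{\Omega^\es}f^\es u^\es\to\int_0^T\!\int_{\mathbb{R}^2}\!\int_Z f\,u_0$ because $f^\es$ converges strongly two-scale with drift by hypothesis and $u^\es$ converges strongly two-scale with drift to $u_0$ by Theorem \ref{T5}/\eqref{strongc}; and $\es\int_0^T\!\int_{\Gamma_N^\es}(u^\es)^2\to|\Gamma_N|\int_0^T\!\int_{\mathbb{R}^2}u_0^2$, from the scaled trace inequality $\es\|w\|_{L^2(\Gamma_N^\es)}^2\le C\bigl(\|w\|_{L^2(\Omega^\es)}^2+\es^2\|\nabla w\|_{L^2(\Omega^\es)}^2\bigr)$ applied to $w=u^\es-u_0^\es$ (right-hand side vanishing by \eqref{strongc} and Theorem \ref{energy}) together with Lemma \ref{oscilationlemma} for $\es\int_{\Gamma_N^\es}(u_0^\es)^2$. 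For the final-time term one shows $\|u^\es(T)\|_{L^2(\Omega^\es)}^2\to|Z|\,\|u_0(T)\|_{L^2(\mathbb{R}^2)}^2$. The lower bound $\liminf_\es\|u^\es(T)\|_{L^2(\Omega^\es)}^2\ge|Z|\,\|u_0(T)\|_{L^2(\mathbb{R}^2)}^2$ follows because the time-equicontinuity estimate of Lemma \ref{lemma5} gives, via Arzel\`a--Ascoli, uniform convergence on $[0,T]$ of the coefficients $v^\es_{jk}$ of Theorem \ref{T5}, hence weak two-scale convergence with drift of $u^\es(T)$ to $u_0(T)$ at the fixed time $T$, and then Lemma \ref{sem}. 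Similarly $\liminf_\es T_1^\es\ge A$ by weak lower semicontinuity (Lemma \ref{sem} applied componentwise to $(D^{1/2})^\es\nabla u^\es$, only the symmetric part of $D$ entering these quadratic forms). Finally, testing the homogenised weak formulation \eqref{t7e23} with $u_0$ and the cell identity \eqref{t7e15} with $u_1$, and simplifying the drift-generated cross terms via $\mathrm{div}_yB=0$, $B\cdot\nabla_yC=0$, $B\cdot n_y=0$ (in particular $\int_Z B(1-2Cu_0)\cdot\nabla_yu_1\,u_1\,dy=0$) and one integration by parts in $x$, one verifies the algebraic identity $A=|Z|\int_0^T\!\int_{\mathbb{R}^2}D^*(u_0,W)\nabla_xu_0\cdot\nabla_xu_0$ and that the limit of the right-hand side of the microscopic energy balance equals exactly $A+\tfrac12|Z|\,\|u_0(T)\|_{L^2(\mathbb{R}^2)}^2$; the two one-sided inequalities then pin down $T_1^\es\to A$ (and simultaneously $\|u^\es(T)\|^2\to|Z|\,\|u_0(T)\|^2$).

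Combining the three limits gives $E^\es\to A-2A+A=0$, hence $R^\es\to0$ in $L^2(0,T;L^2(\Omega^\es))$ by \ref{A1}, and with the control of $\es(\nabla_xu_1)^\es$ this is precisely \eqref{corrector}. The main obstacle is Step~3: one has to reconcile the microscopic energy balance with the homogenised and cell-problem energy identities and, above all, handle the final-time term $\|u^\es(T)\|^2$ — no uniform bound on $\partial_tu^\es$ is available in the fixed frame because of the $\es^{-1}$ drift, so one is forced to exploit the extra compactness available only in the moving frame, i.e. the mechanism underlying Theorem \ref{T5}.
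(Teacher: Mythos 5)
Your proposal follows essentially the same strategy as the paper: expand the quadratic form $\int_0^T\int_{\Omega^\es}D^\es R^\es\cdot R^\es$, identify the cross and oscillating terms via two-scale convergence with drift and Lemma \ref{oscilationlemma} (the paper's \eqref{ce11}--\eqref{ce12}), and reduce everything to the energy identification $\lim_{\es\to0}\int_0^T\int_{\Omega^\es}D^\es\nabla u^\es\cdot\nabla u^\es = \int_0^T\int_{\mathbb{R}^2}\int_Z D(\nabla_xu_0+\nabla_yu_1)\cdot(\nabla_xu_0+\nabla_yu_1)$, obtained by squeezing the microscopic energy balance (where \eqref{zer} kills the exploding drift) against the homogenized energy identity and the lower-semicontinuity Lemmas \ref{sem}--\ref{semb}. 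The one genuine divergence is your treatment of the final-time term $\|u^\es(T)\|_{L^2(\Omega^\es)}^2$: the paper sidesteps the issue entirely by integrating the energy balance a second time in $t$ (from $0$ to $p$), so that only the time-integrated norm $\|u^\es\|_{L^2((0,t)\times\Omega^\es)}^2$ appears --- which converges by Lemma \ref{L9} and the strong convergence in moving coordinates of Theorem \ref{T5} --- and then differentiates in $p$ at the end (see \eqref{ce3}--\eqref{ce9} and \eqref{ce}). You instead work at the fixed time $T$ and need pointwise-in-time two-scale convergence of $u^\es(T)$, which you justify via the equicontinuity of Lemma \ref{lemma5} plus Arzel\`a--Ascoli and a squeeze between one-sided bounds; this is workable (Lemma \ref{lemma4} is indeed pointwise in $t$, so the tail control survives at $t=T$), but it is an extra step the paper deliberately avoids, and you should make the uniform tail estimate at fixed $t$ explicit if you go this route. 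On the other hand, you are more careful than the paper on two points it glosses over: the residual term $\es(\nabla_xu_1)^\es$ in the gradient of the corrector ansatz (with the smoothing/density fallback if $u_0$ lacks regularity), and the full convergence, rather than mere lower semicontinuity, of the boundary term via the scaled trace inequality. Both treatments reach the same conclusion.
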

\end{tcolorbox}

\begin{proof}
     We choose as test function $\phi=u^\es$ in the weak formulation \eqref{wf}. Integrating the result  from 0 to $t$, we obtain
    \begin{align}
   \int_0^t \int_{\Omega^\es} \partial_t u^\es u^\es dxds+\int_0^t \int_{\Omega^\es}D^\es\nabla u^\es\nabla u^\es dxds&- \frac{1}{\es} \int_0^t \int_{\Omega^\es}B^\es P(u^\es)\nabla u^\es dxds\nonumber \\
   &=  \int_0^t \int_{\Omega^\es}f^\es u^\es dxds-\es \int_0^t \int_{{\Gamma_N^\es}} g_N(u^\es) u^\es d\sigma ds.\label{ce1}
   \end{align}
   Using \eqref{zer} into \eqref{ce1}, yields
   \begin{align}
       \frac{1}{2} \int_{\Omega^\es} (u^\es)^2 dx +\int_0^t \int_{\Omega^\es}D^\es\nabla u^\es\nabla u^\es dxds=  \frac{1}{2} \int_{\Omega^\es} (u^\es(0))^2 dx&+\int_0^t \int_{\Omega^\es}f^\es u^\es dxds\nonumber\\
       &-\es \int_0^t \int_{{\Gamma_N^\es}} g_N(u^\es) u^\es d\sigma ds.\label{ce2}
   \end{align}
   {  Since $u^\es$ strongly converges to $u_0$, $f^\es$ strongly converges to $f$ in the sense of \eqref{strongc2}, we have $\lim_{\es\rightarrow 0}\int_0^t \int_{\Omega^\es}f^\es (x)u^\es (t,x)dxds=\int_0^t \int_{\mathbb{R}^2}\int_Z f(x,y) u_0(t,x)\; dydxds$}. 
   We integrate \eqref{ce2} from 0 to $p$, using Lemma \ref{L9}, Lemma \ref{oscilationlemma} and pass $\es\rightarrow 0$, we arrive at
 \begin{align}
       \frac{|Z|}{2} \int_0^p\int_{\mathbb{R}^2} (u_0)^2 dxdt+ \lim_{\es \rightarrow 0}&\int_0^p \int_0^t \int_{\Omega^\es}D^\es\nabla u^\es\nabla u^\es dxdsdt
       =  \frac{|Z|}{2} \int_0^p\int_{\mathbb{R}^2} g^2 dxdt
     \nonumber \\
     &+\int_0^p\int_0^t \int_{\mathbb{R}^2}\int_Z f u_0\; dydxdsdt - \lim_{\es \rightarrow 0}\es \int_0^p\int_0^t \int_{{\Gamma_N^\es}}  (u^\es)^2 d\sigma dsdt.\label{ce3}
   \end{align}
  Using Lemma \ref{semb} and \eqref{btwoscale} in \eqref{ce3}, we have
   \begin{align}
       \frac{1}{2} \int_0^p\int_{\mathbb{R}^2} (u_0)^2 dxdt+ \lim_{\es \rightarrow 0}&\frac{1}{|Z|}\int_0^p \int_0^t\int_{\Omega^\es}D^\es\nabla u^\es\nabla u^\es dxdsdt \leq \frac{1}{2}  \int_0^p\int_{\mathbb{R}^2} g^2 dxdt
       \nonumber\\
      &+\frac{1}{|Z|}\int_0^p\int_0^t \int_{\mathbb{R}^2}\int_Z f u_0\; dydxdsdt- \frac{|\Gamma_N|}{|Z|}\int_0^p\int_0^t\int_{\mathbb{R}^2} u_0^2\; dx dsdt.\label{ce4}
   \end{align}
   Now,  observing the structure of $D^*$  as it appears in \eqref{effectdiff}, we obtain for any $\xi\in \mathbb{R}^2$  that 
   \begin{align}
       D^*\xi \cdot\xi =\frac{1}{|Z|}\int_Z D(\xi+\nabla_y \sum w_i \xi_i)\cdot D(\xi+\nabla_y \sum w_i \xi_i).\label{ce5}
   \end{align}
   Using the structure of $u_1$ from \eqref{cell} and \eqref{ce5}, we have
   \begin{align}
       D^*\nabla_x u_0 \cdot\nabla_x u_0 =\frac{1}{|Z|}\int_Z D(\nabla_x u_0+\nabla_y u_1)\cdot (\nabla_x u_0+\nabla_y u_1).
   \end{align}
   Consider the weak form of \eqref{homeq1} and choose the test function $u_0$. We thus obtain
   \begin{multline}\label{ce7}
    \int_0^t\int_{\mathbb{R}^2}\partial_t u_0 u_0 dxds+\int_0^t\int_{\mathbb{R}^2} D^*\nabla_x u_0 \nabla_x u_0 dxds\\
    =\frac{1}{|Z|}\int_0^t\int_{\mathbb{R}^2}\int_Z fu_0dydxds-\frac{|\Gamma_N|}{|Z|}\int_0^t\int_{\mathbb{R}^2}u_0^2dxds,
\end{multline}
and hence, it holds as well that 
 \begin{multline}\label{ce8}
    \frac{1}{2}\int_{\mathbb{R}^2} u_0^2 dx+\frac{1}{|Z|}\int_0^t\int_{\mathbb{R}^2} \int_Z D(\nabla_x u_0+\nabla_y u_1)\cdot (\nabla_x u_0+\nabla_y u_1)dy dxds\\
    =\frac{1}{|Z|}\int_0^t\int_{\mathbb{R}^2}\int_Z fu_0\;dydxds-\frac{|\Gamma_N|}{|Z|}\int_0^t\int_{\mathbb{R}^2}u_0^2dxds+ \frac{1}{2}  \int_{\mathbb{R}^2} g^2 dx.
\end{multline}
Integrating \eqref{ce8} from 0 to $p$ and then comparing with \eqref{ce4}, we get
\begin{align}
      \frac{1}{2}& \int_0^p\int_{\mathbb{R}^2} u_0^2 dxdt+ {\lim_{\es \rightarrow 0}\frac{1}{|Z|}\int_0^p \int_0^t\int_{\Omega^\es}D^\es\nabla u^\es\nabla u^\es dxdsdt}\nonumber\\
       &\leq \frac{1}{2}\int_0^p\int_{\mathbb{R}^2} u_0^2 dxdt+\frac{1}{|Z|}\int_0^p\int_0^t\int_{\mathbb{R}^2} \int_Z D(\nabla_x u_0+\nabla_y u_1)\cdot (\nabla_x u_0+\nabla_y u_1)dy dxdsdt.
\end{align}
So,
\begin{align}
      {\lim_{\es \rightarrow 0} \int_0^p \int_0^t\int_{\Omega^\es}D^\es\nabla u^\es\nabla u^\es dxdsdt}
       &\leq \int_0^p \int_0^t\int_{\mathbb{R}^2} \int_Z D(\nabla_x u_0+\nabla_y u_1)\cdot (\nabla_x u_0+\nabla_y u_1)dy dxdsdt.\label{ce14}
\end{align}
From Lemma \eqref{sem} and \eqref{2drift2},  we have 
\begin{align}
    \int_0^p\int_0^t\int_{\mathbb{R}^2} \int_Z D(\nabla_x u_0+\nabla_y u_1)\cdot (\nabla_x u_0+\nabla_y u_1)dy dxdsdt \leq  {\lim_{\es \rightarrow 0} \int_0^p\int_0^t \int_{\Omega^\es}D^\es\nabla u^\es\nabla u^\es dxdsdt}\label{ce15}.
\end{align}
Comparing \eqref{ce14} and \eqref{ce15}, lead us to
\begin{align}
    \lim_{\es \rightarrow 0}\int_0^p\int_0^t\int_{\Omega^\es}D^\es\nabla u^\es\nabla u^\es dxdsdt=\int_0^p\int_0^t\int_{\mathbb{R}^2} \int_Z D(\nabla_x u_0+\nabla_y u_1)\cdot (\nabla_x u_0+\nabla_y u_1)dy dxdsdt.\label{ce9}
\end{align}
Now, differentiating \eqref{ce9} with respect to $p$, using the Fundamental Theorem of Calculus,  and finally choosing $t=T$, allows us to write
\begin{align}
    \lim_{\es \rightarrow 0}\int_0^T\int_{\Omega^\es}D^\es\nabla u^\es\nabla u^\es dxdt=\int_0^T\int_{\mathbb{R}^2} \int_Z D(\nabla_x u_0+\nabla_y u_1)\cdot (\nabla_x u_0+\nabla_y u_1)dy dxdt.\label{ce}
\end{align}
By the ellipticity condition of $D$, we have 
\begin{align}
    \theta &\left\|\nabla\left( u^\es(t,x)-u_0(t,x-\dfrac{B^*t}{\es})-\es u_1(t,x-\dfrac{B^*t}{\es},\frac{x}{\es})\right)\right\|_{L^{2}(0,T;L^2(\Omega^\es))} \nonumber\\
    &\leq \int_0^T \int_{\Omega^\es}D^\es \nabla\left( u^\es(t,x)-u_0(t,x-\dfrac{B^*t}{\es})-\es u_1(t,x-\dfrac{B^*t}{\es},\frac{x}{\es})\right) \nonumber\\
    &\hspace{1cm}\cdot \nabla\left( u^\es(t,x)-u_0(t,x-\dfrac{B^*t}{\es})-\es u_1(t,x-\dfrac{B^*t}{\es},\frac{x}{\es})\right) dxdt.\label{ce10}
\end{align}
Using the definition of two-scale convergence with drift, we have
\begin{align}
    \lim_{\es \rightarrow 0}\int_0^T \int_{\Omega^\es}D^\es\nabla u^\es\nabla u_0 dxdt&=\int_0^T\int_{\mathbb{R}^2} \int_Z D(\nabla_x u_0+\nabla_y u_1)\cdot\nabla_x u_0 dy dxdt,\label{ce11}\\
    \lim_{\es \rightarrow 0}\int_0^T \int_{\Omega^\es}\es D^\es\nabla u^\es\nabla u_1 dxdt&=\int_0^T\int_{\mathbb{R}^2} \int_Z D(\nabla_x u_0+\nabla_y u_1)\cdot\nabla_y u_1 dy dxdt,\\
    \lim_{\es \rightarrow 0}\int_0^T \int_{\Omega^\es} D^\es\nabla u_0\nabla u_0dxdt&=\int_0^T\int_{\mathbb{R}^2} \int_Z D\nabla_x u_0\nabla_x u_0 dy dxdt,\\
    \lim_{\es \rightarrow 0}\int_0^T \int_{\Omega^\es}\es^2 D^\es\nabla u_1\nabla u_1 dxdt&=\int_0^T\int_{\mathbb{R}^2} \int_Z D\nabla_y u_1\nabla_y u_1dy dxdt,\label{ce12}
\end{align}
Now, using \eqref{ce9}, \eqref{ce11}--\eqref{ce12}, we finally obtain
\begin{multline}\label{ce13}
   \lim_{\es \rightarrow 0}  \int_0^T \int_{\Omega^\es}D^\es \nabla\left( u^\es(t,x)-u_0(t,x-\dfrac{B^*t}{\es})-\es u_1(t,x-\dfrac{B^*t}{\es},\frac{x}{\es})\right)\\
    \hspace{1cm}\cdot \nabla\left( u^\es(t,x)-u_0(t,x-\dfrac{B^*t}{\es})-\es u_1(t,x-\dfrac{B^*t}{\es},\frac{x}{\es})\right) dxdt=0.
\end{multline}
We conclude our proof by using \eqref{ce13} and  
 \eqref{ce10} to obtain the desired result \eqref{corrector}.
\end{proof}

\section{Conclusion and outlook}\label{conclusion}

We rigorously derived a macroscopic equation for a reaction-diffusion problem with nonlinear drift posed in an unbounded perforated domain together with Robin type boundary data.  The main challenge for the homogenization was the presence of the nonlinear drift term which is scaled with a factor of order of  $\mathcal{O}\left(\dfrac{1}{\es}\right)$.  The key tools used to handle the homogenization asymptotics were the two-scale convergence with drift and the strong convergence formulated in moving co-ordinates. To study the well-posedness of microscopic problem we  relied on a classical result from \cite{ladyzhenskaia1988linear} that we  extended to cover the case of the unbounded perforated domain as needed in our target problem. To do so, we used a suitable  comparison principle jointly with a monotonicity argument.

 The obtained upscaled equation is a reaction-dispersion equation posed in an unbounded domain, strongly coupled with an elliptic cell problem posed in a bounded domain.  The corresponding dispersion tensor carries information concerning the microscopic diffusion and drift. We also provided a strong convergence result related to the structure of the corrector, exploiting the difference in the micro- and  macro-solutions in the $H^1$ norm. 

 We did perform the analysis and subsequent discussion in two-space dimensions since the original modeling of our equations was done in terms of  interacting particle systems  inter-playing in a plane. However, both the homogenization result and the corrector convergence hold  for higher dimensions.  One technical assumption deserves further attention -- we assumed here that the mean value of the coefficient in front of the nonlinear drift is zero. We believe it is a challenging open problem to handle the homogenization of such nonlinear drift case  when the mean value of this drift coefficient is non vanishing. 

 In this work, we only discussed about the convergence of the corrector. From a more practical perspective, it would be though very useful to derive a corrector estimate which can later be used to analyse the problem numerically. Till now, to our knowledge, the only corrector estimate result related to large-drift homogenization problems is reported in \cite{ouaki2012multiscale,ouaki2013etude}, where the authors considered a  linear diffusion-large convection problem. Along the same line of thinking, developing a two-scale finite element approach  for the micro-macro problem could be an excellent  research direction to pursue (following e.g. the works \cite{henning2010heterogeneous}, \cite{henning2011note}, or \cite{abdulle2017numerical}). The challenging parts for the numerical analysis are the presence of the large nonlinear drift occurring in the microscopic problem as well as  the strong coupling through the transport coefficient in the upscaled problem. 

\section*{Acknowledgments} We thank  H. Hutridurga (Bombay) for fruitful discussions during his KAAS seminar. V.R. thanks M. Eden (Karlstad) for interesting discussions related to the corrector result. 
The work of V.R. and A.M. is partially supported by the Swedish Research Council's project ``{\em  Homogenization and dimension reduction of thin heterogeneous layers}" (grant nr. VR 2018-03648).



\bibliographystyle{amsplain}
	\bibliography{main}

\end{document}